\def\comment#1{}
\newcommand{\N}{\mathbb{N}}
\newcommand{\D}{\mathbb{D}}
\newcommand{\Ss}{\mathbf{S}}
\newcommand{\T}{\mathbb{T}}
\newcommand{\cE}{\mathcal{E}}
\newcommand{\cC}{\mathcal{C}}
\newcommand{\cS}{\mathcal{S}}
\newcommand{\cA}{\mathcal{A}}
\newcommand{\cL}{\mathcal{L}}
\newcommand{\cT}{\mathcal{T}}
\newcommand{\x}{\mathbf{x}}
\newcommand{\y}{\mathbf{y}}
\newcommand{\z}{\mathbf{z}}
\def\to{\mathop{\rightarrow}}
\def\dans{\mathop{\subset}}
\newcommand{\moins}{\setminus}
\newcommand{\ri}{r_{\mathrm{inj}}}
\newcommand{\sys}{\mathrm{sys}}
\newcommand{\dist}{\mathrm{dist}}
\newcommand{\Int}{\mathrm{Int}}
\newcommand{\Id}{\mathrm{Id}}
\newcommand{\vide}{\emptyset}
\def\dans{\mathop{\subset}}
\def\To{\mathop{\longrightarrow}}
\newtheorem{theorem}{Theorem}[section]
\newtheorem{maintheorem}{Theorem}
\newtheorem{lemma}[theorem]{Lemma}
\newtheorem{proposition}[theorem]{Proposition}
\newtheorem*{theoremc}{Theorem C}
\theoremstyle{definition}
\newtheorem{definition}[theorem]{Definition}
\theoremstyle{remark}
\newtheorem{remark}[theorem]{Remark}
\date{}
\author[S. Alvarez]{S{\'e}bastien Alvarez}
\address{CMAT, Facultad de Ciencias, Universidad de la Rep\'ublica, Uruguay}
\email{salvarez@cmat.edu.uy}
\author[J. Brum]{Joaqu\'in Brum}
\address{IMERL, Facultad de Ingenier\'ia, Universidad de la Rep\'ublica, Uruguay}
\email{jbrum@fing.edu.uy}
\author[M.Martinez]{Matilde Mart\'inez}
\address{IMERL, Facultad de Ingenier\'ia, Universidad de la Rep\'ublica, Uruguay}
\email{mmartinez@fing.edu.uy}
\author[R. Potrie]{Rafael Potrie}
\address{CMAT, Facultad de Ciencias, Universidad de la Rep\'ublica, Uruguay}
\urladdr{www.cmat.edu.uy/$\sim$rpotrie}
\email{rpotrie@cmat.edu.uy}
\author[]{an Appendix written with Maxime Wolff}
\address{IMJ, Universit\'e Sorbonne Paris Cit\'e, France}
\email{maxime.wolff@imj-prg.fr}
\title{Topology of leaves for minimal laminations by hyperbolic surfaces}
\thanks{The authors were partially supported by CSIC 618, CSIC I+D 389, FCE-135352, FCE-148740 and MathAmSud RGSD 19-MATH-04. J.B. acknowledges the support of CONICYT via FONDECYT Postdoctorate 3190719. S.A. and J.B. were partially supported by the the Distinguished Professor Fellowship of FSMP. S.A. and M.W. acknowledge the support of LIA-IFUM}
\begin{document}

\maketitle

\begin{abstract}
We construct minimal laminations by hyperbolic surfaces whose generic leaf is a disk and contain any prescribed family of surfaces and with a precise control of the topologies of the surfaces that appear. The laminations are constructed via towers of finite coverings of surfaces for which we need to develop a relative version of residual finiteness which may be of independent interest. The main step in establishing this relative version of residual finiteness is to obtain finite covers with control on the \emph{second systole} of the surface, which is done in the appendix. In a companion paper, the case of other generic leaves is treated. 
\end{abstract}

\section{Introduction} 
A \emph{lamination} or {\em foliated space} of dimension $d$ is a compact metrizable space which is locally homeomorphic to a disk $D$ in $\mathbb{R}^d$ times a compact set $T$ called \emph{transversal}. The space is required to have a compatibility condition between these local trivialisations, which guarantees that sets of the form $D\times\{t\}$ glue together to form $d$-dimensional manifolds called {\em leaves}. The space is therefore a disjoint union of the leaves, which can be embedded in the compact space in very complicated ways. When the space is a manifold, this structure is usually called a {\em foliation}. When the transversal $T$ is a Cantor set, it is called a {\em solenoid}. When all the leaves are dense, we say that the lamination is {\em minimal}. We refer the reader 
to \cite{candel-conlon} for more details and to \cite{Ghys_Laminations} for an excellent survey about the two-dimensional case.

For laminations by surfaces, i.e. when $d=2$, the topology and the geometry of leaves have been widely studied. Cantwell and Conlon proved that any surface appears as a leaf of a foliation by surfaces of a closed 3-manifold, see \cite{CC_realization}. Note that their construction does not produce a minimal foliation. In a very nice work \cite{MG}, Gusm{\~a}o and Meni{\~n}o have recently shown how to construct \emph{minimal} foliations by surfaces on some finite quotients of circle bundles over closed surfaces containing any prescribed countable familly of noncompact surfaces as leaves. On the other hand, Ghys proved in \cite{Ghys_generic} that for a lamination by surfaces, the generic leaf --in the sense of Garnett's harmonic measures \cite{Garnett}-- is either compact or in a list containing only six noncompact surfaces. An analogous statement holds for generic leaves in a topological sense under an assumption which is satisfied by minimal laminations \cite{Cantwell_Conlon}. Much work on the quasi-isometry class of 
leaves has led to many results concerning the topology of generic leaves. This is the subject of \cite{alvarez-candel}, which contains most relevant results and references. Many 
remarkable examples have given us insights into which leaves can appear, or coexist, in a lamination by surfaces. It is worth mentioning the Ghys-Kenyon example, constructed in \cite{Ghys_Laminations} which is a \emph{minimal} lamination containing leaves with different conformal types. Also one has the famous Hirsch's foliation where all leaves have infinite topological types (see \cite{Hirsch1975} for the original construction and \cite{ADMV,AlvarezLessa,Cantwell_Conlon,Ghys_generic} for the minimal construction). Other aspects of the study of this subject have been pursued in different works, a non exhaustive list is \cite{Clark_Hurder,Hurder_matchbox,Sibony_etc,McCord,Schori,Sullivan,Verjovsky}. 

In this paper, we are interested in the study of the topology of the leaves for minimal laminations by {\em hyperbolic} surfaces. Such laminations are quite ubiquituous after a very beautiful uniformisation result of Candel \cite{Candel} which shows that unless some natural obstruction appears, every lamination by surfaces admits a leafwise metric which makes every leaf of constant negative curvature. 

The motivation for this work was to understand the possible topologies that can coexist in a minimal lamination by hyperbolic surfaces. In this setting, a strong dichotomy holds: either the generic leaf is simply connected or all leaves have a `big' fundamental group --one which is not finitely generated-- see \cite[Theorem 2]{ADMV}. We have found that there are no obstructions to having all surfaces simultaneously in the same lamination.

\begin{maintheorem}\label{t.uno}
There exists a minimal lamination by hyperbolic surfaces so that for every non-compact open surface $S$ there is at least one leaf homeomorphic to $S$.
\end{maintheorem}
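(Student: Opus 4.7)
The plan is to construct the lamination as the inverse limit of a tower
\[
\Sigma_0 \leftarrow \Sigma_1 \leftarrow \Sigma_2 \leftarrow \cdots
\]
of finite connected coverings $\pi_n\colon \Sigma_{n+1}\to\Sigma_n$ of closed hyperbolic surfaces, in the spirit of the solenoidal constructions of Sullivan and Ghys. The inverse limit $\Sigma_\infty := \varprojlim \Sigma_n$ inherits a lamination by hyperbolic surfaces and is automatically minimal because each cover is connected. Its leaves are the intermediate coverings of $\Sigma_0$ associated, via compatible sequences of basepoints $\ast = (\ast_n)$, to the subgroups
\[
\Gamma_{\ast} \;:=\; \bigcap_n \, (\pi_0 \circ \cdots \circ \pi_{n-1})_* \, \pi_1(\Sigma_n, \ast_n) \;\subset\; \pi_1(\Sigma_0).
\]

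The task thus reduces to engineering the tower so that, for every non-compact open surface $S$ in the Kerékjártó--Richards classification, some compatible sequence of basepoints yields a leaf homeomorphic to $S$. I would first fix an enumeration $(S_j)_{j \geq 1}$ of all homeomorphism classes of non-compact open surfaces and, for each $S_j$, an exhaustion $K_j^1 \subset K_j^2 \subset \cdots$ by compact connected subsurfaces with boundary. I would then build the tower inductively so that, at stage $n$, the surface $\Sigma_n$ contains pairwise disjoint embedded copies of $K_j^n$ for every $j \leq n$, and so that $\pi_n$ lifts each copy of $K_j^n \subset \Sigma_n$ homeomorphically to a copy of $K_j^{n+1} \subset \Sigma_{n+1}$ realizing the next piece of the exhaustion of $S_j$, while also introducing a fresh copy of $K_{n+1}^1$ for the new surface on the list. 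Choosing compatible basepoints inside the increasing nest of copies of $K_j^n$ then produces a leaf homeomorphic to $\bigcup_n K_j^n = S_j$.

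The crucial ingredient for the inductive step is a \emph{relative} form of residual finiteness for surface groups: given a closed hyperbolic surface $\Sigma$, a disjoint family of compact subsurfaces with boundary $K_1, \dots, K_m \subset \Sigma$, and prescribed extensions $K_j \subset K_j^+$, one needs a finite connected cover $\widetilde{\Sigma} \to \Sigma$ in which each $K_j$ lifts homeomorphically to a copy of the extension $K_j^+$, and such that every short closed geodesic of $\widetilde{\Sigma}$ lies inside $\bigcup_j K_j^+$. This last requirement, a lower bound on the \emph{second systole} of $\widetilde{\Sigma}$, is what prevents unwanted small loops from cluttering the complement of the prescribed pieces, and is what allows a new surface $S_{n+1}$ to be appended at each stage without spoiling the leaves prescribed earlier (as a bonus not needed for Theorem~A, it also forces the generic leaf of $\Sigma_\infty$ to be a disk). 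Establishing this geometric strengthening of residual finiteness is the main obstacle and is the content of the appendix.
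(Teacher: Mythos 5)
Your plan is close in spirit to the paper, but it contains a gap that is not a technicality: you begin by fixing \emph{an enumeration $(S_j)_{j\geq 1}$ of all homeomorphism classes of non-compact open surfaces}. No such enumeration exists. The Ker\'ekj\'art\'o--Richards classification (Theorem~\ref{classification}) shows that the homeomorphism type of a non-compact surface is determined by a classifying triple $(g,\cE_0,\cE)$ where $\cE$ is an arbitrary nonempty compact totally disconnected metrizable space; since there are uncountably many such spaces (up to homeomorphism, already among closed subsets of the Cantor set), there are uncountably many non-compact surfaces. Your inductive scheme, which adds ``a fresh copy of $K_{n+1}^1$ for the new surface on the list'' at stage $n$, therefore at best realizes a countable family of surfaces --- i.e.\ it proves the weaker Theorem~\ref{t.tres}, not Theorem~\ref{t.uno}. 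The real difficulty of Theorem~\ref{t.uno} is exactly to beat this cardinality obstruction, and that is what your outline does not address.

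The paper gets around this by not enumerating surfaces at all. Instead it encodes each surface by a rooted \emph{coding tree} (following Bavard--Walker), and organizes the construction as a \emph{forest of surfaces} whose $n$-th floor is the \emph{finite} set of isomorphism classes of radius-$(2n+1)$ balls in coding trees. There are only finitely many such balls at each radius (vertices have bounded valence), so each floor of the forest is finite and can be placed inside a closed surface $\Sigma_n$; yet the set of \emph{ends} of this forest is in bijection with the set of all coding trees, hence realizes every surface as a limit along some ray. So the ``exhaustion'' idea you have is roughly right, but it has to run over balls in coding trees (finitely many per floor) rather than over a putative list of surfaces. A second, smaller point: once a nested family of subsurfaces is embedded along a compatible sequence of basepoints, it is not automatic that the resulting leaf equals $\bigcup_n K_j^n$ --- a priori the leaf could be strictly larger. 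The paper controls this (Lemma~\ref{l.no_closed_geod_outside}, Lemma~\ref{l.no_connexion}, Proposition~\ref{p.recognize_surface}) by ensuring that the components of the complement of the embedded direct limit in the leaf contain no closed geodesics and have connected boundary; this is what makes ``the leaf is homeomorphic to the direct limit'' a theorem rather than an assertion. Your appeal to a relative residual finiteness with control on the second systole is the correct engine for both points, but the combinatorial frame around it needs to be the finite-floor forest, not an enumeration.
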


After proving this theorem, we were informed that a similar result had been announced in the late 90's by Blanc \cite{Blanc_these}, as part of his unpublished doctoral thesis. Nevertheless, the lamination described by Blanc, constructed with a refinement of Ghys-Kenyon's method, does not seem to admit a leafwise metric of constant curvature $-1$.

We present a very  flexible combinatorial method to construct minimal laminations by hyperbolic surfaces with prescribed surfaces as leaves. This yields, in a straightforward way, the example announced in Theorem~\ref{t.uno} and many others. For minimal laminations, there is always a residual set of leaves having 1, 2 or a Cantor set of ends, see \cite{Cantwell_Conlon}. In this paper we restrict ourselves to considering laminations for which there is a residual set of leaves which are planes. Blanc studied the two-end case in \cite{Blanc_2bouts}. 

A companion paper \cite{AB} by the first two authors will address the case where the generic leaf has a Cantor set of ends. It contains a refinment of \cite[Theorem 2]{ADMV}: \emph{for every leaf of such a lamination, all isolated ends are accumulated by genus ---this is called condition $(\ast)$ in} \cite{AB}. Using the formalism developped in the present paper, as well as new techniques, it is proven that this is the only obstruction. Better still: \emph{there exists a minimal lamination $\cL$ by hyperbolic surfaces whose generic leaf is a Cantor tree and such that for every non-compact surface $S$ satisfying condition $(\ast)$, there is at least one leaf homeomorphic to $S$.} The formalism used there and the spirit of the proof are very similar, but other difficulties appear, and new techniques are needed.


We are unable, in general, to prescribe exactly which noncompact surfaces will appear as leaves in our examples. However, we do not know if this is a weakness of our method or if it reflects a general obstruction. When restricting ourselves to finite or countable families of surfaces (recall that there are uncountably many topological types of them), the formalism of \emph{forests of surfaces} together with Theorem \ref{teo-mainabstract}, allows us to get an optimal result.

\begin{maintheorem}\label{t.tres}
 Let $\cA= \{S_n\}_{n}$ be a  finite or countable sequence of non-compact open surfaces different from the plane. 
Then,
there is a minimal lamination by hyperbolic surfaces for which the generic leaf is a plane, and the leaves which are not simply connected form a sequence $\{L_n\}$ such that $L_n$ is homeomorphic to $S_n$ for every $n$.
\end{maintheorem}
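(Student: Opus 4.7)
The plan is to derive Theorem~\ref{t.tres} as a direct application of the main abstract construction, Theorem~\ref{teo-mainabstract}, which produces a minimal lamination by hyperbolic surfaces with planar generic leaf out of a suitable \emph{forest of surfaces}, and which pins down the homeomorphism type of each non-planar leaf from the combinatorial data of an infinite ray in the forest.

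First, I would tailor a forest of surfaces $\cF$ to the sequence $\cA=\{S_n\}$. For each $n$ choose a compact exhaustion $K^n_1\subset K^n_2\subset\cdots$ of $S_n$ by connected subsurfaces with boundary, whose inclusions meet the regularity conditions required by the formalism (typically $\pi_1$-injectivity and an appropriate behavior on boundary components). I would then assemble these exhaustions into $\cF$ so that its infinite rays are in bijection with the indices $n$, and so that the directed union of the vertex-surfaces along the $n$-th ray is homeomorphic to $S_n$. Whether this is done with one tree per surface or with a single tree whose branches encode the different $S_n$ depends on the precise formulation of the forest, but in any case the construction is combinatorial and only uses that the list is countable.

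Next, I would feed $\cF$ into Theorem~\ref{teo-mainabstract} and read off the output: a minimal lamination by hyperbolic surfaces whose generic leaf is a plane and whose non-planar leaves are parametrized by the infinite rays of $\cF$, each such leaf being homeomorphic to the corresponding directed union. By construction, these leaves form exactly the prescribed sequence $\{L_n\}$ with $L_n\cong S_n$.

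The main obstacle, and the reason the theorem is stated only for finite or countable families, is that in general a prescribed family of leaves can \emph{force} the appearance of additional non-planar leaves whose topology is not among the $S_n$, as pointed out in the introduction. To rule this out one has to arrange the forest so that the only topologies produced by its rays are the $S_n$ themselves: the planar generic leaf already accounts for all ``thin'' limits, and the countable number of designated rays must be engineered, via a careful enumeration and diagonal bookkeeping, so that every limit topology of nested vertex-surfaces along any ray stabilizes to one of the prescribed surfaces. Once this combinatorial control is built into $\cF$, Theorem~\ref{t.tres} is immediate from Theorem~\ref{teo-mainabstract}.
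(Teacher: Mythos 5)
Your proposal correctly identifies the right tools --- Theorem~\ref{teo-mainabstract} and Proposition~\ref{p.forestinclusion} --- and the right strategy: build a forest of surfaces whose set of ends is in bijection with the indices $n$ and whose limit surfaces realize the $S_n$. This is indeed the paper's approach. There is, however, a gap in your handling of the central combinatorial point. You write that the forest could be built with ``one tree per surface or \dots\ a single tree whose branches encode the different $S_n$'', and that unwanted leaves can be ruled out ``via a careful enumeration and diagonal bookkeeping''. For a countably infinite family $\{S_n\}$ this is not a bookkeeping issue: the end space of any locally finite rooted tree with finite floors is compact, so a countably infinite set of designated ends must accumulate somewhere in that tree, and the end at which they accumulate produces a non-planar leaf not on your list. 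This is precisely the content of Remark~\ref{r.forestjustif}, and it rules out the ``single tree'' option outright. The paper's definition of \emph{forest} is engineered to avoid this: it is a disjoint union of rooted trees whose roots may sit at arbitrarily high floors, so the end space is discrete and need not be compact. The actual construction proving Theorem~\ref{t.tres} uses exactly one infinite ray per surface $S_n$, with the $n$-th ray's root placed at floor $n$ (the paper's ``countable forest of coding trees'', with $V_n(\cT)=\{B_{\Lambda_i}(v_i,r_{n,i}):i\le n\}$ and $r_{n,i}=2(n-i)+1$); the end space is then literally the index set, with no accumulation. The exhaustions $K^n_1\subset K^n_2\subset\cdots$ you invoke are formalized via coding trees (Lemma~\ref{t:bavad_walker}) and the passage from forests of coding trees to forests of surfaces, which also supplies the required ``good inclusion'' conditions automatically. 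With the forest built in this staggered way, the deduction from Theorem~\ref{teo-mainabstract} is immediate, as you say.
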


 Notice that the sequence $\cA$ can take any value more than once --even infinitely many times. So this theorem says that, given a finite or countable set of noncompact surfaces, there is a lamination having each element as a leaf exactly some prescribed number of times. 
 
These techniques can produce a wider variety of examples. We refer the reader to Theorem~\ref{teo-mainabstract} and Proposition~\ref{p.forestinclusion} for the most general statements which allow in particular to show Theorems \ref{t.uno} and \ref{t.tres} (see section \ref{s.including_forest}).

\begin{remark}\label{rem.obstsusp}
Foliations of codimension one do not have this flexibility, at least in enough regularity. In fact, for a foliation by surfaces of a closed 3-manifold by surfaces of finite type, either all leaves are simply connected or there are infinitely many leaves which are not. This will be explained in Proposition~\ref{rem.obstminimalfol} (see also Remark~\ref{rem.obssuspS1}).
\end{remark}

All of our examples are solenoids, obtained as the inverse limit $\mathcal{L}$ of an infinite tower 
$$\cdots \xrightarrow{p_n} \Sigma_n \xrightarrow{p_{n-1}} \Sigma_{n-1} \cdots \xrightarrow{p_0}\Sigma_0$$
of finite covers of a compact hyperbolic surface $\Sigma_0$. 
These solenoids are also the object of \cite[Section 2]{Sibony_etc}. There, Sibony, Fornaess and Wold prove, among other things, that there is a unique transverse holonomy-invariant measure, and also that there are no harmonic measures other than the one which is totally invariant (see \cite[Theorem 1]{Sibony_etc}, bearing in mind that holonomy-invariant measures are the same as positive closed currents, as explained in \cite{Sullivan_currents}).
Also, they prove  that the laminations we construct embed in $\mathbb{C}\mathbb{P}^3$ (see also \cite{Deroin_plongements} for other results of immersion of laminations inside projective complex spaces).
On the other hand, it is easy to see that laminations obtained by inverse limits hardly ever embed in a 3-manifold. This follows from the more general fact that a minimal lamination by hyperbolic surfaces which admits a holonomy-invariant measure and embeds in a 3-manifold has the following property: either all its leaves are simply connected or none of them are, see Remark \ref{remark-embed}. 

All covering maps $p_n:\Sigma_{n-1}\to \Sigma_n$ are local isometries, and an  appropriate control of the geometry of the $\Sigma_n$ will allow us to prescribe the topology of the leaves of $\mathcal{L}$. Fornaess, Sibony and Wold construct a lamination where all leaves but one are simply connected. In the present work, to be able to construct every possible surface, we need a tighter grip on the properties of the tower, and therefore a better understanding of finite coverings of surfaces. The main technical tool is the following statement, of independent interest, concerning covering maps between compact hyperbolic surfaces (which appears in the Appendix, joint with M. Wolff).

\begin{maintheorem}
\label{teo.apendice}
  Let $\Sigma$ be a closed hyperbolic surface, and let $\alpha\subset\Sigma$
  be a simple closed geodesic. Then, for all $K>0$, there exists a finite 
  covering $\pi:\hat \Sigma\to\Sigma$ such that
  \begin{itemize}
  \item $\hat \Sigma$ contains a non-separating simple closed geodesic such that $\pi(\hat \alpha)=\alpha$ and $\pi$ restricts to a homeomorphism on $\hat \alpha$;
  \item every simple closed geodesic which is not $\hat \alpha$ has length larger than $K$.
  \end{itemize}
\end{maintheorem}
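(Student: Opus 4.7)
The approach is to use Scott's LERF theorem for surface groups together with a compactness argument that reduces the problem to finitely many obstructions. The first, easy step is to produce an isometric lift of $\alpha$: apply Scott's theorem to the cyclic subgroup $\langle\alpha\rangle\le\pi_1(\Sigma)$ to obtain a finite-index subgroup containing $\alpha$, giving a finite cover in which $\alpha$ lifts to a simple closed geodesic $\hat\alpha$ of the same length. If $\hat\alpha$ happens to be separating, pass to a further double cover that breaks this separation property (either by arranging that $\pi^{-1}(\alpha)$ contains additional components which bridge the two sides of $\hat\alpha$, or by pulling back a suitable $\mathbb{Z}/2$ cover); the important point is to preserve the isometric lift of $\alpha$.

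The main step is to eliminate all other simple closed geodesics of length at most $K$. By compactness of $\Sigma$, there are only finitely many primitive closed geodesics $\gamma_1,\ldots,\gamma_N$ of length at most $K$. Since any simple closed geodesic $\hat\gamma$ on a finite cover of $\Sigma$ projects to some (not necessarily simple) closed geodesic of the form $\gamma_i^k$ on $\Sigma$ with $k\,\ell(\gamma_i)=\ell(\hat\gamma)\le K$, the set of ``candidate bad pairs'' $(\gamma_i,k)$ is finite. For each such pair with $(\gamma_i,k)\ne(\alpha,1)$ one produces, in a finite cover dominating the one from the first step, a cover in which no closed geodesic of length at most $K$ projects onto $\gamma_i^k$. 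Taking the fibered product of these finitely many covers yields a single $\hat\Sigma$ in which $\hat\alpha$ is the only simple closed geodesic of length at most $K$.

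The hard part is the ``relative LERF'' input used at each step: given primitive $a,g\in\pi_1(\Sigma)$ with $g$ not conjugate to any power of $a$, and an integer $k$, one has to find a finite-index subgroup $H\ni a$ whose conjugates all avoid $g^k$, so that no closed lift of the corresponding geodesic in the cover is short. A direct application of Scott's theorem produces $a\in H$ with $g^k\notin H$, but the conjugate $x g^k x^{-1}$ might still lie in $H$; ruling this out for every $x$ requires either conjugacy separability of surface groups (which can itself be derived from Scott) or a more geometric version of Scott's argument that realises $\langle a \rangle$ as an embedded curve in a finite cover whose complement is far from the axes of the $\gamma_i$'s and their translates. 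A further subtlety arises for powers $\alpha^k$ with $k\ge 2$: one must prescribe the orbit structure of $\alpha$ on the coset space $\pi_1(\Sigma)/H$ to have a single fixed coset (giving $\hat\alpha$) and all other $\langle\alpha\rangle$-orbits of size exceeding $K/\ell(\alpha)$, so that no extraneous short simple lift wrapping $\hat\alpha$ several times can occur.
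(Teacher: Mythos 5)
Your high-level plan correctly identifies the structure of the problem (finitely many primitive geodesics of length $\leq K$, fibered products of covers, the need to control both the lift of $\alpha$ and to exclude all other short lifts), and, crucially, you pinpoint the genuine obstruction: a na\"{\i}ve application of Scott's LERF theorem cannot control \emph{conjugates} of the short elements. But the proposal does not actually overcome this obstruction; it only names two candidate tools (conjugacy separability, or ``a more geometric version of Scott's argument'') without carrying either through. As written this is a gap, not a proof, and it sits exactly where the paper spends all of its effort.

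The gap is real and not easily patched by conjugacy separability alone. What you need is a finite-index $H\le\pi_1(\Sigma)$ such that $\alpha\in H$, the coset $H$ is the \emph{unique} coset fixed by $\alpha$ in $\pi_1(\Sigma)/H$ (this gives the unique $(1:1)$ lift $\hat\alpha$), \emph{and} for each short primitive $\gamma_i\neq\alpha$ and each $d$ with $d\,\ell(\gamma_i)\le K$, the permutation $\gamma_i^d$ acting on $\pi_1(\Sigma)/H$ is fixed-point-free. Conjugacy separability produces a \emph{normal} subgroup $N$ excluding a given conjugacy class; but if $\alpha\in N$ and $N$ is normal then $\alpha$ fixes every coset of $N$, so you get many $(1:1)$ lifts of $\alpha$, not one. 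Conversely, with a non-normal $H$ given by LERF, the condition $\gamma_i^d\notin H$ does not prevent some $x\gamma_i^d x^{-1}$ from landing in $H$, which is exactly the short geodesic you wanted to forbid. Your fibered-product step inherits the same tension: if one factor is normal and contains $\alpha$ while another has a unique fixed coset for $\alpha$, the product has $[\text{factor}:\text{intersection}]$ fixed cosets, again destroying uniqueness. Your closing remark about prescribing ``orbit structure'' is pointing in the right direction (this is precisely what one must do), but it is stated as a desideratum, not achieved.

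The paper's route is quite different and deliberately bypasses these algebraic subtleties. It first replaces the hyperbolic metric by one \emph{adapted} to $\alpha$ (so $\alpha$ is the unique systole and has a collar wider than its length), then proceeds iteratively: in each step it strictly increases the \emph{second} systole; by discreteness of the length spectrum, finitely many steps reach length $>K$. The point of the adapted metric is Lemma~\ref{lem:BetaCool}: any geodesic realizing the second systole must be \emph{simple} and meet $\alpha$ in at most one point. This reduces the construction to four explicit topological configurations of the pair $(\alpha,\beta)$, and for each one a cover is built by hand — cut $\Sigma$ along one, two, or three auxiliary simple curves and reassemble $N+1$ copies of the cut surface along a finite graph covering of a wedge of circles, choosing permutations $\sigma(a),\sigma(b),\ldots$ in $\mathfrak S_{N+1}$ so that $\sigma(\alpha)$ has exactly one fixed point and one long cycle, while $\sigma(\beta)$ is fixed-point-free. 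The assertion that ``all other lifts of $\alpha$ are at least $(N:1)$'' (not merely $\neq(1:1)$) is what makes the fibered product of finitely many such covers again have a unique $(1:1)$ lift of $\alpha$: orbit sizes in the product are LCMs, so the unique fixed coset stays unique. This explicit graph-gluing is the content you would need to supply to complete your sketch, and it is not implied by Scott or by conjugacy separability in any formal way.
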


In other words, there is a finite cover where the curve $\alpha$ has a $(1:1)$-lift, all the other curves open up and no new short curve appears. This result allows us to get a relative version of residual finiteness for surface groups which may be interesting by itself, see Theorem \ref{t.finitecovering}.

It is similar in spirit to the LERF property proved for surface groups by Scott in \cite{Scott}. By directly applying the LERF property we could find a finite covering where the curve $\alpha$ has a $(1:1)$-lift with a large collar neighbourhood but some new short curves can appear in its complement.  Similar geometric and quantitative properties of surface groups have been proved with different motivations, for a recent such result see, e.g. \cite{lazarovich-levit-minsky}.


\paragraph{ {\bf Organization of the paper --}} The paper is structured as follows: Section~\ref{s.Preliminaries} covers preliminary material related to compact hyperbolic surfaces, towers of coverings of such surfaces and properties of their inverse limits. In Section~\ref{s.illustration} we present some illustrative examples, motivating the techniques and pointing out some differences with the foliation setting, it closes with some explainations on how the general results will be obtained.  Section~\ref{s.toolbox} develops the necessary tools to control the geometry of the finite covers (in particular, the general version: Theorem \ref{t.finitecovering} of the relative version of residual finitness is obtained). In Section~\ref{s.admissible} we define an abstract object, an \emph{admissible tower of coverings}, which enables us to control the topology of leaves of a lamination. The notion of forest of surfaces is also introduced. Section \ref{s.including_forest} is the technical heart of the paper: we 
prove there Proposition \ref{p.forestinclusion}, which allows us to construct towers of finite coverings admissible with respect to any forest of surfaces. Finally, in Section \ref{s.final_proof} we construct the necessary forests of surfaces in order to prove Theorems \ref{t.uno} and \ref{t.tres}; the constructions there are flexible and allow to make other examples that the reader can pursue if desired.

{\small
\paragraph{ {\bf Acknowledgements --}} It is a pleasure to thank Henry Wilton whose answer to our question in MathOverflow, which contained a first sketch of proof of Theorem \ref{teo.apendice} (see \cite{HW}), has been very important for the completion of our work. Gilbert Hector kindly communicated to us Blanc's thesis, we are thankful to him. Finally we thank Fernando Alcalde, Pablo Lessa, Jes\'us \'Alvarez Lopez, Paulo Gusm\~ao and Carlos Meni\~no for useful discussions. Last but not least we wish to thank the referee for his/her valuable comments that allowed us to improve the presentation of this work.
}

\section{Preliminaries}\label{s.Preliminaries}

\subsection{Towers of coverings and minimal laminations}

\paragraph{ {\bf Definitions --}} Let $\T = \{p_n : \Sigma_{n+1} \to \Sigma_n \}$ where $\Sigma_n$ are closed hyperbolic surfaces and $p_n$ are finite (isometric) coverings. We define $\mathcal{L}$ to be the inverse limit of $\T$, that consists of sequences $\x=(x_n)_{n\in\N}\in\prod_n \Sigma_n$ such that for every $n\in\N$, $p_n(x_{n+1})=x_n$, and we endow it with the the topology induced by the product topology.

\begin{remark}
Let us emphasize that in this paper all covering maps are local isometries.
\end{remark}

The set $\mathcal{L}$ is a compact space and possesses a lamination structure so that the leaf of a sequence $\x=(x_n)_{n\in\N}$, denoted by $L_\x$, is formed by those sequences $\y=(y_n)_{n\in\N}$ such that $\dist(x_n,y_n)$ is bounded (see \cite[Proposition 2]{Sibony_etc}). 

\begin{remark}\label{r.distanciamonotona} Let $\x=(x_n)_{n\in\N}$ and $\y=(y_n)_{n\in\N}$ be two different points in $\mathcal{L}$. Then the sequence of distances $d(x_n,y_n)$ is increasing with $n$. To see this notice that a path $\alpha$ between $x_n$ and $y_n$ that realizes the distance between them, projects down onto a path of the same length between $x_m$ and $y_m$ whenever $m\leq n$.
 \end{remark}

\paragraph{{\bf Leafwise metric --}}Let $\x\in\cL$ and $L_\x$ be the leaf of $\x$. Let us consider the following covering maps
\begin{itemize}
\item $\Pi_n:L_\x\to \Sigma_n$ associating to $\y$ the $n$-th coordinate $y_n$.
\item $P_n=p_0\circ\ldots \circ p_{n-1}: \Sigma_n\to \Sigma_0$.
\item $P_{n,m}=p_m\circ\ldots\circ p_{n-1} :\Sigma_n\to\Sigma_m$ for $m<n$.
\end{itemize}

Note that $P_n \circ \Pi_n =\Pi_0$ for every $n$ and that $p_{n-1} \circ \Pi_n=\Pi_{n-1}$. We can lift the metric of $\Sigma_0$ on each $\Sigma_n$ using maps $P_n$ (so all coverings $p_n$ are local isometries) and on $L_\x$ (so that all $\Pi_n$ are local isometries). We denote by $g_n$ the metric on $\Sigma_n$ and by $g_{L_\x}$ the metric on $L_\x$. This gives a \emph{leafwise metric}, i.e. an assignment $L\mapsto g_L$ which is transversally continuous in local charts.

\medskip

\paragraph{{\bf Minimality --}} Recall that a lamination is said to be \emph{minimal} if all of its leaves are dense.

\begin{proposition}\label{p.minimality}
The lamination $\cL$ defined by a tower $\T$ of finite coverings of closed hyperbolic surfaces is minimal.
\end{proposition}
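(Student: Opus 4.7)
The goal is to show that every leaf $L_\x$ meets every nonempty open subset of $\cL$. The strategy is first to reduce density to a statement about the projections $\Pi_M\colon\cL\to\Sigma_M$, and then to realize prescribed points of $\Sigma_M$ as coordinates of points of $L_\x$ by lifting paths up the tower.

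\emph{Topological reduction.} Since each $p_n$ is surjective and since the coordinates of index $n<M$ of a point of $\cL$ are determined by its $M$-th coordinate via $P_{M,n}$, a basis of open sets of $\cL$ is given by $\{\Pi_M^{-1}(V)\,:\,M\in\N,\ V\subset\Sigma_M \text{ open}\}$. So to prove density of $L_\x$ it suffices to check that for every $M$ and every nonempty open $V\subset\Sigma_M$ one has $L_\x\cap\Pi_M^{-1}(V)\neq\emptyset$, that is, that $\Pi_M(L_\x)=\Sigma_M$.

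\emph{Construction of leaf points with prescribed coordinate.} Fix $N\in\N$ and $z\in\Sigma_N$. I would join $x_N$ to $z$ by a continuous path $\gamma\colon[0,1]\to\Sigma_N$. For every $n\geq N$ the composition $P_{n,N}\colon\Sigma_n\to\Sigma_N$ is a covering, so $\gamma$ lifts uniquely to a path $\tilde\gamma^{(n)}$ in $\Sigma_n$ starting at $x_n$; set $z_n=\tilde\gamma^{(n)}(1)$. Uniqueness of lifts ensures $p_n(z_{n+1})=z_n$ for all $n\geq N$. For $n<N$, set $z_n=P_{N,n}(z_N)$. The resulting sequence $\z=(z_n)$ lies in $\cL$ and satisfies $\Pi_N(\z)=z$.

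\emph{Verification that $\z\in L_\x$.} One needs the sequence $(d(z_n,x_n))_n$ to be bounded. For $n\geq N$, since each $p_k$ is a local isometry, the lifted path $\tilde\gamma^{(n)}$ has the same length as $\gamma$, hence $d(z_n,x_n)\leq\mathrm{length}(\gamma)$. For $n<N$, the map $P_{N,n}$ is a local isometry and therefore distance non-increasing, so $d(z_n,x_n)\leq d(z_N,x_N)\leq\mathrm{length}(\gamma)$; this is also consistent with Remark~\ref{r.distanciamonotona}. Hence $\z\in L_\x$, which combined with the reduction step yields density of $L_\x$.

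The only delicate point is the initial topological reduction, which uses in an essential way both the surjectivity of the bonding maps and the fact that the coordinates $n<M$ are functorially determined by the $M$-th one. Once this is noted, the rest is a routine path-lifting construction together with the local-isometry property of the coverings, making the required distance estimates immediate.
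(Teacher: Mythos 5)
Your proof is correct and follows essentially the same strategy as the paper: reduce to the observation that the topology on $\cL$ is generated by preimages $\Pi_M^{-1}(V)$, then use path-lifting up the tower together with the local-isometry property of the covering maps to produce points of $L_\x$ with any prescribed $M$-th coordinate. The paper phrases this as directly constructing a point of $L_\y \cap \hat U$ rather than showing $\Pi_M(L_\x)=\Sigma_M$, but the construction and the distance estimate are identical.
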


\begin{proof}
Given points $\x=(x_n)_{n\in\mathbb{N}}$ and $\y=(y_n)_{n\in\mathbb{N}}$ in $\mathcal{L}$, we show that for any neighbourhood $\hat U$ of $\x$, $\hat U\cap L_{\y}\neq \emptyset$.

Recall that the topology on $\cL$ is induced by the product topology. So given an open neighbourhood $\hat U$ of $\x$ in $\cL$ there exists an integer $n_0>0$ and a positive number $\delta_0$, such that $\hat U$ contains every point $\x'\in\cL$ satisfying $\dist(x_n,x'_n)<\delta_0$ for every $n\leq n_0$.

Let $\alpha_{n_0}$ be any path in $\Sigma_{n_0}$ starting at $y_{n_0}$ and ending at $x_{n_0}$.
 For every $n\geq n_0$ there exists a path $\alpha_n$ in $\Sigma_n$ starting at $y_n$ such
  that $P_{n,n_0}\circ\alpha_n=\alpha_{n_0}$. Note that for all $n\geq n_0$ we have that the length $l_{\alpha_n}$ of $\alpha_n$ is equal to the length 
  $l_{\alpha_{n_0}}$ of $\alpha_{n_0}$. Let $\y'\in\cL$ be defined as follows. For $n\leq n_0$, $y_n'=x_n$ and for
   $n>n_0$, $y_n'$ is the other extremity of $\alpha_n$. The first condition implies that $\y'\in\hat U
   $. The second one implies that $\dist(y_n,y_n')\leq l_{\alpha_{n_0}}$ for every $n$ so that $\y'\in L_\y$.
    This proves that $\hat U\cap L_\y\neq\vide$. 
    
    This proves the minimality of $\cL$.
\end{proof}

In fact, we know from \cite{Matsumoto} that laminations constructed in this way must be \emph{uniquely ergodic}, since it can be shown that they are equicontinuous (see also \cite{Sibony_etc}). 

\begin{remark}\label{remark-embed}
Laminations constructed this way that embed in 3-manifolds need to be quite special. Indeed, since they admit a transverse invariant measure, the codimension one property implies some local order preservation: If $\Lambda$ is compact lamination with a transverse invariant measure,  $i: \Lambda \to M$ is an embedding in a 3-manifold and if $L$  is a non-simply connected leaf, then, one can consider a small transversal to a non trivial loop $\gamma$ and the holonomy of the lamination can be pushed to nearby leaves because the measure is preserved as well as the order. The loops in the nearby leaves cannot be homotopically trivial since that would imply that closed curves of a given length bound arbitrarily large disks contradicting the fact that leaves are hyperbolic. If the lamination is minimal this implies that every leaf has a non-trivial fundamental group. This implies that if a minimal lamination with hyperbolic leaves and a transverse invariant measure embeds in a 3-manifold then either every leaf is 
simply connected, or no leaf is.
\end{remark}

\subsection{Geometry and topology of the leaves}
\label{s.geom_topo_leaves}

\paragraph{ {\bf Cheeger-Gromov convergence --}} 

A sequence $(\Sigma_n,g_n,x_n)_{n\in\N}$ of pointed complete Riemannian manifolds is said to converge towards the pointed complete Riemannian manifold $(L,g,x)$ in the \emph{Cheeger-Gromov sense} whenever there exists a sequence of smooth mappings $\Pi_n:L\to\Sigma_n$ such that
\begin{enumerate}
\item for every $n\in\N$, $\Pi_n(x)=x_n$; and for every compact set $K\dans L$ there exists an integer $n_0=n_0(K)>0$ such that
\item for every $n\geq n_0$, $\Pi_n$ restricts to a diffeomorphism of $K$ onto its image;
\item the sequence of pull-back metrics $(\Pi_n^{\ast}g_n)_{n\geq n_0}$ converges to $g$ in the $C^{\infty}$-topology over $K$.
\end{enumerate}

The sequence $(\Pi_n)_{n\in\N}$ is called a sequence of \emph{convergence mappings} of $(\Sigma_n,g_n,x_n)_{n\in\N}$ with respect to $(L,g,x)$. This mode of convergence is sometimes called \emph{smooth convergence}: \cite{Pablito,Pet}. It  appeared first in \cite{Gr}, where Gromov proved that Cheeger's finiteness theorem (see \cite{Ch}) was in fact a compactness result. We will refer to \cite{Pet} for more details about it.

\paragraph{ {\bf Topology of the leaves --}} Cheeger-Gromov convergence proves to be especially useful to identify the topology of leaves of a lamination coming from a tower of finite coverings.

Below, $\cL$ denotes the inverse limit of a tower $\T=\{p_{n+1}:\Sigma_{n+1}\to\Sigma_n\}$ of finite coverings of closed hyperbolic surfaces.
\begin{proposition}\label{p.Cheeger_Gromov}
Let $\x=(x_n)_{n\in\N}\in \mathcal{L}$. Then the pointed leaf $(L_\x,g_{L_\x},\x)$ is the Cheeger-Gromov limit of pointed Riemannian manifolds $(\Sigma_n,g_n,x_n)$.
\end{proposition}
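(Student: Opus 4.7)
The natural candidate for the sequence of convergence mappings is the family of coordinate projections $\Pi_n : L_\x \to \Sigma_n$ introduced just above the statement. Condition (1) is immediate since $\Pi_n(\x) = x_n$ by the very definition of $\x$. Condition (3) comes for free: each $\Pi_n$ being a local isometry by construction of the leafwise metric, one has the \emph{exact} equality $\Pi_n^\ast g_n = g_{L_\x}$ on all of $L_\x$, so no limiting procedure is needed.

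The content of the statement therefore lies entirely in condition (2): for every compact $K\subset L_\x$, the restriction $\Pi_n|_K$ should be a diffeomorphism onto its image for $n$ large enough. Since $L_\x$ and $\Sigma_n$ are both complete Riemannian surfaces (leaves of a solenoidal lamination over a compact hyperbolic surface are complete), and $\Pi_n$ is a local isometry, $\Pi_n$ is automatically a Riemannian covering map, hence already a local diffeomorphism. The only thing to check is \emph{injectivity} of $\Pi_n|_K$ for $n$ large.

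The approach I would take is a nested-deck-group argument. Let $\Gamma_n\subset \mathrm{Isom}(L_\x)$ denote the group of deck transformations of $\Pi_n$. The factorisation $\Pi_n = p_n \circ \Pi_{n+1}$ yields the inclusion $\Gamma_{n+1}\subset \Gamma_n$. Moreover $\bigcap_n \Gamma_n = \{\mathrm{id}\}$: if $\gamma$ lies in every $\Gamma_n$ then $\Pi_n(\gamma\y) = \Pi_n(\y)$ for all $n$ and all $\y\in L_\x$, so $\gamma\y$ agrees with $\y$ in every coordinate, forcing $\gamma=\mathrm{id}$. Since $\Sigma_0$ is compact, $\Gamma_0$ acts properly discontinuously on $L_\x$, so the set
\[
F := \{\gamma\in \Gamma_0 : \gamma(K)\cap K \neq \vide\}
\]
is finite. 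For each non-trivial $\gamma\in F$, choose $n(\gamma)$ with $\gamma \notin \Gamma_{n(\gamma)}$, and set $n_0 := \max_{\gamma\in F\setminus\{\mathrm{id}\}} n(\gamma)$. For $n\geq n_0$ no non-trivial element of $\Gamma_n$ sends $K$ back into itself, and therefore $\Pi_n|_K$ is injective, as required.

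The main obstacle is precisely this uniform injectivity on $K$. Remark~\ref{r.distanciamonotona} already furnishes the pointwise input (for any fixed $\y\neq\z$ in $L_\x$ the sequence $d_n(y_n,z_n)$ is non-decreasing and hence positive from some $n$ on), but promoting this pointwise fact to a uniform statement on the compact set $K$ is exactly what the finiteness of $F$ combined with the triviality of $\bigcap_n\Gamma_n$ achieves.
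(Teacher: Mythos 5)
Your setup (conditions (1) and (3), the reduction to injectivity of $\Pi_n$ on compact sets, and the general strategy of a nested sequence of groups with trivial intersection together with compactness) is the same as the paper's. However, your reduction of non-injectivity to the action of $\Gamma_n$ has a genuine gap: the groups $\Gamma_n=\mathrm{Deck}(\Pi_n)$ act transitively on fibres of $\Pi_n$ only when $\Pi_n:L_\x\to\Sigma_n$ is a \emph{regular} (Galois) covering, which is not part of the hypotheses here. The coverings $p_n$ produced by the surgeries in Section~\ref{s.toolbox} are in general not normal, so neither are the composites $\Pi_n$, and for an irregular covering one can have $\Pi_n(\y)=\Pi_n(\z)$ with $\y\neq\z$ while no element of $\Gamma_n$ carries $\y$ to $\z$ (for instance, a degree-$3$ covering arising from a non-normal subgroup has trivial deck group). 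Thus ``$\Gamma_n\cap F=\{\mathrm{id}\}$'' does not imply injectivity of $\Pi_n|_K$, and the last sentence of your argument does not follow.

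The correct group to use is not the deck group of $L_\x\to\Sigma_n$ but the deck group of the \emph{universal} covering $\widetilde{L_\x}\to\Sigma_n$, i.e.\ the Fuchsian group $H_n\cong\pi_1(\Sigma_n,x_n)$ acting on $\widetilde{L_\x}\cong\mathbb{H}^2$; one then checks $H_{n+1}\subset H_n$, $\bigcap_n H_n=\pi_1(L_\x,\x)$, and that for every $R>0$ the finite sets $\{\gamma\in H_n : \gamma \tilde\x\in B(\tilde\x,2R)\}$ stabilise. This is exactly what the paper does, phrased via the images $G_n=(P_n)_\ast\pi_1(\Sigma_n,x_n)\subset\pi_1(\Sigma_0,x_0)$ and the discreteness of the geometric length on $\pi_1(\Sigma_0,x_0)$. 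Replacing $\Gamma_n$ by $H_n$ (or by $G_n$) in your argument, and noting that any two preimages of a point under $\Pi_n$ \emph{are} related by an element of $H_n$ once lifted to the universal cover, repairs the proof and makes it essentially the same as the paper's.
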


\begin{proof}
Candidates for convergence mappings are given by the maps $\Pi_n:L_\x\to\Sigma_n$. These are indeed local isometries. By the inverse function theorem it is enough to prove that for every $R>0$ there exists $n_0$ such that $\Pi_n$ is injective on the ball $B_{L_\x}(\x,R)$ for every $n\geq n_0$.

Consider the groups $G_n=(P_n)_\ast(\pi_1(\Sigma_n,x_n))$ and $G_\x=(\Pi_0)_\ast(\pi_1(L_\x,\x))$. By definition they form a decreasing sequence of subgroups of $\pi_1(\Sigma_0,x_0)$. On the other hand, for every $R>0$ there are only finitely many geodesic loops of length less than $R$ in $\Sigma_0$. Thus, for every $R>0$ there exists $n_1(R)\geq 0$ such that for every $n\geq n_1(R)$ we have
\begin{equation}
\label{eq.group_stat}
G_\x\cap D_R=G_n\cap D_R
\end{equation}
where $D_R$ denotes the disk centered at the identity of radius $R$ inside $\pi_1(\Sigma_0,x_0)$ for the geometric norm, i.e. the one that associates to $\gamma$ the length of the corresponding geodesic loop based at $x_0$ (i.e. the one that associates to $\gamma$ the length of the associated geodesic loop based at $x_0$ that is the projection of the geodesic segment $[x,\gamma.x]$ where $x$ is the preferred lift of $x_0$).

First note that if $\Pi_n(\y)=\Pi_n(\z)$ for some $n\in\N$ and $\y,\z\in L_\x$ then we have $\Pi_m(\y)=\Pi_m(\z)$ for every $m\leq n$. Assume that for infinitely many integers $n$ there exists an open geodesic segment $\alpha_n\dans B(\x,R)$ so that $\Pi_n\circ\alpha_n$ is a geodesic loop (where $B(\x,R)$ denotes the ball of radius $R$ about $\x$). Using Ascoli's theorem and the remark above, we see that there exists an open geodesic ray $\alpha\dans B(\x,R)$ such that $\Pi_n\circ\alpha$ is a closed geodesic loop (hence nontrivial in homotopy) for every $n\in\N$, contradicting \eqref{eq.group_stat}.

\end{proof}

Notice in particular that the leaves of $\cL$ are hyperbolic surfaces, but all this discussion works equally well if one considers towers of coverings of compact Riemannian manifolds of any dimension.


\subsection{Some hyperbolic geometry}\label{s.hyp_geom}

\paragraph{ {\bf Systoles, collars and injectivity radius --}} Below we set definitions and notations of hyperbolic geometry that will be used throughout the paper.

\begin{definition}\label{d.collars1}Let $X$ be a compact hyperbolic surface with geodesic boundary. The \emph{systole} $\sys(X)$ of $X$ is the length of the shortest geodesic in $X$. The \emph{internal systole} of $X$ is the shortest length of an essential and primitive closed curve in $X$ which is not isotopic to a boundary component. This is also the smallest length of a closed geodesic included in the interior $\Int(X)$.
\end{definition}

Notice that if $X$ has no boundary, these two concepts coincide, but when $X$ has boundary, the systole could be achieved by a boundary component. 

\begin{definition}\label{d.collars}
The \emph{(maximal) half-collar width} $K_0$ at a boundary component $\alpha$ of $X$ is the minimal half-distance of two lifts of $\alpha$ to the Poincar\'e disk $\D$. It satisfies that for  every $K<K_0$ the $K$-neighbourhood of $\alpha$ is an embedded half-collar. 

We say that the boundary of $X$ has a \emph{half-collar of width} $K_0$ if there exists a neighbourhood of $\partial X$ consisting of a disjoint union of embedded half-collars of width $K_0$. 
\end{definition}

We now give a series of lemmas that we will use later in the text.

\begin{lemma}
\label{l.halfcollars}
Let $X$ be a  hyperbolic surface with geodesic boundary which is not a pair of pants and $\alpha\dans\partial X$ be a boundary component. Then
$$K_0>\frac{\sigma-l_{\alpha}}{2},$$
where $\sigma$ and $K_0$ denote respectively the internal systole and the half-collar width at $\alpha$ of $X$.
\end{lemma}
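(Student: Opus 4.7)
My approach is to extract from the maximality of the half-collar a short essential closed curve lying in the interior of $X$, and to use it to bound $\sigma$ from above by $2K_0+l_\alpha$.

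In the universal cover $\D$, let $\tilde\alpha_1,\tilde\alpha_2$ be two closest lifts of $\alpha$; by definition of $K_0$ they lie at distance exactly $2K_0$. Their common perpendicular projects to a geodesic arc $\gamma\subset X$ of length $2K_0$ meeting $\alpha$ orthogonally at two points $p,q\in\alpha$. (If $p=q$, the loop $\gamma$ itself is a slight variant that directly produces $\sigma\leq 2K_0$, so I assume below that $p\neq q$.) Write $\alpha=\beta_1\cup\beta_2$, where $\beta_1,\beta_2$ are the two arcs of $\alpha$ cut by $\{p,q\}$, and set $c_i=\gamma\cup\beta_i$: a simple closed piecewise-geodesic curve of length $2K_0+|\beta_i|$ with two right-angle corners on $\alpha$.

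Neither $c_i$ bounds a disk in $X$, since Gauss--Bonnet applied to such a disk, with curvature $-1$ and two $\pi/2$ interior corners, would force a negative area; hence both $c_i$ are essential. The crucial topological step is that at least one of $c_1,c_2$ is not freely homotopic to any boundary component of $X$. To prove this, I consider a regular neighbourhood $P$ in $X$ of the theta-graph $\alpha\cup\gamma$: generically $P$ is an embedded pair of pants whose boundary components are $\alpha$ and two interior curves $c_i^*$ that are isotopic in $P$ to the corresponding $c_i$. If both $c_i$ were freely homotopic in $X$ to boundary components $\alpha^{(i)}$ of $X$, attaching to $P$ the two annuli realising these isotopies would yield an embedded pair of pants in $X$ whose three boundary components all lie on $\partial X$. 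Since $X$ admits no topology on the outer side of its own boundary, $X$ would then coincide with this pair of pants, contradicting the hypothesis. The degenerate case in which the regular neighbourhood of $\alpha\cup\gamma$ is instead a once-punctured torus forces $X$ itself to have that topology, and a direct $\pi_1$-computation based on the relation $c_1c_2^{-1}=\gamma\alpha^{\pm 1}\gamma^{-1}$ in $\pi_1(X,p)$ prevents both simple curves $c_i$ from being isotopic to $\alpha$.

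Equipped with such a $c_i$, its free homotopy class contains a closed geodesic $\delta\subset\Int(X)$ with $\mathrm{length}(\delta)\leq|c_i|=2K_0+|\beta_i|$. Since $p\neq q$ forces $|\beta_i|<l_\alpha$, this gives $\sigma\leq\mathrm{length}(\delta)<2K_0+l_\alpha$, which rearranges to the desired inequality $K_0>(\sigma-l_\alpha)/2$. The main obstacle is the topological step: verifying that the regular neighbourhood of $\alpha\cup\gamma$ really is a pair of pants in the generic case, handling the once-punctured-torus exception, and ruling out cleanly the possibility that both $c_i$ are boundary-parallel without $X$ being a pair of pants.
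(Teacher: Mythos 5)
Your proof takes essentially the same route as the paper: extract the shortest orthogeodesic $\gamma$ from $\alpha$ to itself (of length $2K_0$), form the pair of pants spanned by $\alpha$ and $\gamma$, and observe that one of its non-$\alpha$ cuffs must be essential and not isotopic to a boundary component, yielding a closed geodesic in $\Int(X)$ of length $<2K_0+l_\alpha$. You supply topological justifications that the paper's very terse proof glosses over; the only excess caution is the ``degenerate'' once-punctured-torus neighbourhood and the case $p=q$, neither of which can occur when $\alpha$ is a boundary component of an orientable surface, though guarding against them does no harm.
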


\begin{proof}
By definition $2K_0$ is the minimal distance between two lifts of $\alpha$ to the upper half
 plane and it is the length of a geodesic segment $\gamma$ cutting $\alpha$ orthogonally at two points $x$ and $y$, included inside the pair of pants $P$ attached to $\alpha$.

The pair of pants $P$ has a boundary component $\beta$ disjoint from the boundary $\partial X$. This simple closed curve is isotopic to the concatenation of $\gamma$ with a geodesic segment $[x,y]$ included in $\alpha$. We find
$$2K_0+l_\alpha>l_\beta\geq\sigma.$$

The lemma follows.
\end{proof}

The injectivity radius at a point $x$ of a Riemannian manifold will be denoted by $\ri(x)$. In other words, the injectivity radius at $x$ is the smallest length of a geodesic loop based at $x$. Notice that the geodesic loop is not necessarily a closed geodesic since it can have a cone point at $x$.

\begin{lemma}\label{l.inj_radius}
Let $X$ be a hyperbolic surface with geodesic boundary and $\sigma$ be its internal systole. Assume that boundary components of $X$ have disjoint collars of width $K_0>0$. Assume furthermore that we have
$$K_0\leq \sys(X)\cosh\left(\frac{K_0}{2}\right).$$  

Let $x\in X$ such that $\dist(x,\partial X)\geq K_0$. Then
$$\ri(x)\geq\min\left(\sigma,\frac{K_0}{2}\right).$$
\end{lemma}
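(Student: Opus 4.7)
The plan is to argue by contradiction. Suppose there is a non-trivial geodesic loop $\gamma$ based at $x$ of length $\ell(\gamma)<2\min(\sigma,K_0/2)$, and let $\gamma^*\subset X$ denote the unique closed geodesic in its free homotopy class. The analysis then splits according to whether $\gamma^*$ is a boundary component of $X$ (case (a)) or lies in $\Int(X)$ (case (b)).

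The main tool is the classical hyperbolic displacement formula. Working in the universal cover $\tilde X\subset\mathbb{H}^2$, I would choose a lift $\tilde x$ of $x$ and let $T\in\pi_1(X)$ be the hyperbolic isometry representing the free homotopy class of $\gamma$, so that $\ell(\gamma)=\dist(\tilde x,T\tilde x)$. Writing $r=\dist(\tilde x,\mathrm{axis}(T))$ for the distance to the axis of $T$, one has
\[
\sinh\!\left(\frac{\ell(\gamma)}{2}\right)=\cosh(r)\,\sinh\!\left(\frac{\ell(\gamma^*)}{2}\right),
\]
and the plan is to bound the right-hand side from below in each case.

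In case (b), the definition of the internal systole gives $\ell(\gamma^*)\geq\sigma$, and $\cosh(r)\geq 1$ yields $\ell(\gamma)\geq\ell(\gamma^*)\geq\sigma$ directly. In case (a), the axis of $T$ is a lift of $\gamma^*\subset\partial X$, so $r\geq\dist(x,\partial X)\geq K_0$ and $\ell(\gamma^*)\geq\sys(X)$; plugging these into the displacement formula yields $\sinh(\ell(\gamma)/2)\geq\cosh(K_0)\sinh(\sys(X)/2)$. The crux is to show this right-hand side is at least $\sinh(K_0/2)$, which would force $\ell(\gamma)\geq K_0$. Chaining the elementary inequalities $\sinh(t)\geq t$ at $t=\sys(X)/2$, the rearranged hypothesis $\sys(X)\geq K_0/\cosh(K_0/2)$, the consequence $\cosh(K_0)=2\cosh^2(K_0/2)-1\geq\cosh^2(K_0/2)$ of the double-angle identity, and finally $\sinh(t)\leq t\cosh(t)$ at $t=K_0/2$, one obtains
\[
\cosh(K_0)\sinh\!\frac{\sys(X)}{2}\geq\cosh(K_0)\cdot\frac{K_0}{2\cosh(K_0/2)}\geq\frac{K_0}{2}\cosh\!\frac{K_0}{2}\geq\sinh\!\frac{K_0}{2},
\]
closing case (a). The two case estimates then combine to contradict the assumed upper bound on $\ell(\gamma)$.

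The hard part will be the trigonometric chain in case (a): each step uses the hypothesis or an elementary inequality in a tight form, and the combination is what converts $K_0\leq\sys(X)\cosh(K_0/2)$ into the lower bound on $\sinh(\ell(\gamma)/2)$ needed to conclude. Case (b), by contrast, is essentially immediate from the definition of the internal systole and the fact that $\gamma^*$ minimizes length in its free homotopy class.
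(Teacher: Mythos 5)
Your argument is correct in substance, and it takes a genuinely different route from the paper. In case (a) the paper does not invoke the explicit displacement formula $\sinh(\ell(\gamma)/2)=\cosh(r)\sinh(\ell(\gamma^*)/2)$; instead it observes that the loop, being short and based at a point far from the boundary, must stay outside the $(K_0/2)$-neighbourhood of the boundary geodesic, lifts it, and uses the fact that orthogonal projection onto a geodesic contracts lengths by a factor $1/\cosh(K_0/2)$ in that region. That immediately gives $\sys(X)\le l_\beta\le \ell(\gamma)/\cosh(K_0/2)$ and a contradiction with the hypothesis in one line, with no need for your chain $\sinh t\ge t$, $\cosh(K_0)\ge\cosh^2(K_0/2)$, $\sinh t\le t\cosh t$. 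Your displacement-formula route is a legitimate, more computational substitute; it has the minor advantage of not requiring the preliminary observation that the whole loop stays outside the collar, since you bound $r$ directly by $\dist(x,\partial X)\ge K_0$, and it even yields the slightly stronger bound $\ell(\gamma)\ge K_0$ rather than the paper's $K_0/2$ in the boundary case. One small point to tighten: you should phrase case (a) as ``$\gamma^*$ is a boundary component \emph{or a power of one}'' (the geodesic representative of $\gamma$ may wrap several times around $\partial X$), but the estimate $\ell(\gamma^*)\ge\sys(X)$ holds just the same.

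There is, however, one genuine slip to fix. You open the contradiction with ``suppose $\ell(\gamma)<2\min(\sigma,K_0/2)$'', which is what one would want if $\ri(x)$ were taken to be \emph{half} the length of the shortest geodesic loop at $x$. But your case (b) only yields $\ell(\gamma)\ge\sigma$, which does \emph{not} contradict $\ell(\gamma)<2\min(\sigma,K_0/2)$ whenever $\sigma<K_0$. So as written the contradiction does not close. The fix is simply to drop the factor $2$ and contradict $\ell(\gamma)<\min(\sigma,K_0/2)$; then case (b) gives $\ell(\gamma)\ge\sigma\ge\min(\sigma,K_0/2)$ and case (a) gives $\ell(\gamma)\ge K_0>\min(\sigma,K_0/2)$, both contradictions. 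This matches what the paper actually proves (the paper also only establishes the bound $l_\gamma\ge\min(\sigma,K_0/2)$ for geodesic loops), and is the convention implicitly in force throughout the paper, where only the divergence $\ri(x_n)\to\infty$ is ever used.
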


\begin{proof}
Assume that the hypotheses of the lemma hold. Let $x$ be a point such that $\dist(x,\partial X)\geq K_0$. We must prove that a primitive geodesic loop $\gamma$ based at $x$ satisfies  $l_\gamma\geq\min(\sigma,K_0/2)$. If $\gamma$ is not isotopic to a boundary component of $X$ then $l_\gamma\geq\sigma$. So assume that $\gamma$ is isotopic to a boundary component $\beta$ of $X$ and that $\gamma$ satisfies $l_\gamma<K_0/2$. Then, as a consequence of the triangle inequality, $\gamma$ is entirely contained outside the $(K_0/2)$-neighbourhood of $\beta$. 

Let $\tilde{\beta}$ be a lift of $\beta$ to the Poincar\'e disk $\D$, it is invariant by a hyperbolic isometry denoted by $h$ whose translation length is $l_\beta\geq\sys(X)$. There exists a geodesic segment $\tilde \gamma$ between $\tilde x$ (a lift of $x$) and $h(\tilde x)$ which projects down isometrically onto $\gamma$ and is located outside a $(K_0/2)$-neighbourhood of $\tilde \beta$. Since the orthogonal projection outside a $(K_0/2)$-neighbourhod of $\tilde \beta$ is a contraction of factor $1/\cosh(K_0/2)$ we must have
$$\sys(X)\leq l_{\beta}\leq \frac{l_{\tilde{\gamma}}}{\cosh(K_0/2)}\leq\frac{K_0}{2\cosh(K_0/2)},$$
which contradicts the hypothesis. We deduce that if $\gamma$ is isotopic to a boundary component it must satisfy $l_\gamma\geq K_0/2$.

\end{proof}

\begin{lemma}
\label{l.crosscollars}
Consider $\Sigma$ and $X$ hyperbolic surfaces with geodesic boundary, and a map $$\varphi:X\to\Sigma$$ which is an isometric embedding in restriction to $\Int(X)$. Take $\alpha\dans\partial X$, a boundary component and denote by  $K_0$ the half-collar width of $\alpha$. Then if $\gamma$ is a closed geodesic in $\Sigma$ that crosses $\varphi(\alpha)$, we have $l_{\gamma}>K_0$ 
\end{lemma}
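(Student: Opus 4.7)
The strategy is to pass to the universal cover $\tilde\Sigma$ of $\Sigma$ and to combine two ingredients: two distinct geodesic lines in $\mathbb{H}^2$ meet at most once, and the hypothesis that $\varphi|_{\Int(X)}$ is an isometric embedding transfers the half-collar of $\alpha$ to an embedded half-annulus of $\varphi(\alpha)$ inside $\Sigma$.

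First I pick a crossing point $p\in\gamma\cap\varphi(\alpha)$ and a lift $\tilde p\in\tilde\Sigma$; I write $\tilde\gamma$ and $\tilde\alpha'$ for the geodesic lines through $\tilde p$ lifting $\gamma$ and $\varphi(\alpha)$ respectively. Being distinct geodesic lines of $\mathbb{H}^2$, they satisfy $\tilde\gamma\cap\tilde\alpha'=\{\tilde p\}$. The half-collar of $\alpha$ of width $K_0$ inside $X$, which is an embedded half-annulus by the very definition of $K_0$, maps under $\varphi$ to an embedded half-annulus $H\subset\Sigma$ of width $K_0$ around $\varphi(\alpha)$ on the side facing $\varphi(X)$. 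Lifting $H$ through $\tilde\alpha'$ produces a sheet $\tilde H\subset\tilde\Sigma$, namely the $K_0$-one-sided neighbourhood of $\tilde\alpha'$ on the side of $\tilde\alpha'$ facing $\tilde\varphi(\tilde X)$, and its only lift of $\varphi(\alpha)$ is $\tilde\alpha'$ itself.

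Next I follow $\gamma$ from $p$ in the direction entering $\Int(H)$, and let $\eta$ be the maximal initial sub-arc of $\gamma$ remaining inside $H$; its lift $\tilde\eta\subset\tilde\gamma\cap\tilde H$ starts at $\tilde p\in\tilde\alpha'$. Since $\tilde\gamma\cap\tilde\alpha'=\{\tilde p\}$ and since $\tilde H$ contains no other lift of $\varphi(\alpha)$, the only way for $\tilde\eta$ to leave $\tilde H$ is through its outer boundary, at perpendicular distance $K_0$ from $\tilde\alpha'$. Because arclength dominates perpendicular displacement from a geodesic, this forces $l_\eta=l_{\tilde\eta}\geq K_0$. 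Conversely, the endpoint of $\eta$ lies at distance $K_0$ from $\varphi(\alpha)$ and is therefore distinct from $p$, so $\eta$ does not close up into a full period of $\gamma$; hence $l_\eta<l_\gamma$, and combining these two inequalities gives $l_\gamma>K_0$.

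The delicate step I would verify carefully is the last length comparison, namely that $\eta$ cannot wrap around $\gamma$ for a full period or more while staying in $H$: if it did, $\gamma$ would be a closed geodesic entirely contained in the half-annulus $H$, hence would have to coincide with the core curve $\varphi(\alpha)$, contradicting the transverse crossing at $p$. The same transversality simultaneously guarantees that $\eta$ is non-degenerate, so that the arclength comparison above is meaningful.
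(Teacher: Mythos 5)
Your proof is correct and follows essentially the same route as the paper's: you identify the arc of $\gamma$ that enters the half-collar, observe that it cannot return to $\varphi(\alpha)$ (you phrase this as ``two geodesic lines in $\mathbb{H}^2$ meet at most once,'' the paper phrases it as ``two geodesic arcs cannot bound a bigon,'' which is the same fact), and conclude that the arc must cross the full width of the collar. The only addition is that you spell out why the sub-arc is strictly shorter than $\gamma$, a point the paper leaves implicit.
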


\begin{proof}
Let us denote by $C$ the image by $\varphi$ of a half-collar at $\alpha$ with width $K_0$, as stated in the lemma. Let $\gamma_0$ be a connected component of $\gamma\cap C$ which meets $\varphi(\alpha)$. Since two geodesic arcs cannot bound a bigon, $\gamma_0\cap\varphi(\alpha)$ must be a singleton, so $\gamma_0$ must connect the two boundary components of $C$, therefore it must have length greater than $K_0$. 
\end{proof}

We will also need the following:

\begin{lemma} 
\label{l.systole-gluing}
Let $X$ be a hyperbolic surface with geodesic boundary written as a union $$X=\bigcup_{i=0}^{n}X_i$$ where the $X_i$ are subsurfaces with geodesic boundary meeting each other at boundary components, and $K$ a positive number. Assume moreover that for $i=0,\ldots,n$ we have: 
\begin{itemize} 
\item the internal systole of $X_i$ is greater than $K$;
\item the half collar width of every boundary component of $X_i$ included in the interior of $X$ is greater than $K$;
\item the boundary components of $X_i$ included in the interior of $X$ have length greater than $K$.
\end{itemize} 
Then, the internal systole of $X$ is greater than $K$.

\end{lemma}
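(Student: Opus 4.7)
Let $\gamma$ be an essential, primitive closed curve in $X$ not isotopic to any boundary component of $X$; the goal is to show $l_\gamma > K$. I may replace $\gamma$ by its geodesic representative in $X$. The curves along which distinct $X_i$'s are glued form a finite family $\cC$ of simple closed geodesics in $X$, each of length $> K$ by the third hypothesis.

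I will split the argument according to how $\gamma$ interacts with $\cC$: either (a) $\gamma \subset \Int(X_i)$ for some $i$; (b) $\gamma$ coincides with some curve of $\cC$; or (c) $\gamma$ meets some curve of $\cC$ transversely. These three cases are exhaustive, because two essential closed geodesics in a hyperbolic surface are either equal or cross transversely; so if $\gamma$ meets $\bigcup_{c\in\cC} c$ we are in (b) or (c), and otherwise $\gamma$ sits in $\Int(X_i)$ for some $i$.

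In case (a), $\gamma$ remains essential and primitive in $X_i$ (any null-homotopy or proper power in $X_i$ would persist in $X$), so its length is at least the internal systole of $X_i$, which is $> K$ by the first hypothesis. In case (b), $l_\gamma$ equals the length of some $c\in\cC$, hence $> K$ by the third hypothesis. In case (c), I apply Lemma~\ref{l.crosscollars} to the inclusion $X_i \hookrightarrow X$ (which is an isometric embedding on $\Int(X_i)$), taking $\alpha$ to be the boundary component of $X_i$ crossed by $\gamma$; the corresponding half-collar width is $> K$ by the second hypothesis, so the lemma directly gives $l_\gamma > K$.

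The only delicate point is the trichotomy itself, which rests on the standard fact that two distinct closed geodesics in a hyperbolic surface cannot share a subarc without coinciding. Once this is granted, each case is matched precisely to one of the three hypotheses, and no further calculation is needed.
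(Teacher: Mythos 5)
Your proof is correct and follows essentially the same three-way case analysis as the paper's proof: a geodesic $\gamma$ lying in some $\Int(X_i)$ (handled by the internal systole hypothesis), a geodesic coinciding with a decomposition curve (handled by the length hypothesis), and a geodesic crossing a decomposition curve (handled via Lemma~\ref{l.crosscollars} and the half-collar hypothesis). Your only substantive addition is spelling out why the trichotomy is exhaustive, which the paper leaves implicit; one small imprecision is that $\cC$ should be all boundary components of the $X_i$ lying in $\Int(X)$, not only those where \emph{distinct} $X_i$'s are glued (a single $X_i$ could meet itself along such a curve), but this does not affect the argument.
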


\begin{proof} Take a closed geodesic $\gamma\subset \textrm{Int}(X)$, we must check that its length is greater than $K$. For this, we distinguish three cases.

\noindent \emph{Case 1. $\gamma\subset \Int(X_i)$ \textrm{for some} $i$.} In this case, the length of $\gamma$ must be greater or equal than the internal systole of $X_i$, and therefore is greater than $K$ by hypothesis.

\noindent \emph{Case 2. $\gamma$ crosses a boundary component $b$ of $X_i$ for some $i$.} By hypothesis, the half-collar width of every boundary component of $X_i$ included in the interior of $X$ is greater than $K$, then Lemma \ref{l.crosscollars} implies that $l_{\gamma}>K$.

\noindent \emph{Case 3. $\gamma$ is a boundary component of some $X_i$ included in $\Int(X)$.} In this case the length of $\gamma$ is greater than $K$ by hypothesis. 
This finishes the proof of the lemma. 
\end{proof}

\paragraph{ {\bf Retraction on subsurfaces --}} We will need the following proposition to identify the topology of some complete hyperbolic surface knowing that of a subsurface.

\begin{proposition}\label{p.recognize_surface}
Let $L$ be a complete hyperbolic surface without cusps and $S\dans L$ a closed subsurface with geodesic boundary such that every connected component $C_i$ of $L\moins S$ satisfies the following properties.
\begin{enumerate}
\item $C_i$ does not contain a closed geodesic.
\item The boundary $\partial \overline{C}_i$ is connected
\end{enumerate}
Then $L$ is diffeomorphic to $\Int(S)$.
\end{proposition}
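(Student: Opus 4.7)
The plan is to prove that each complementary component $\overline{C}_i$, equipped with the intrinsic hyperbolic metric inherited from $L$, is diffeomorphic to a half-open annulus $S^1\times[0,\infty)$; granting this, $L$ is obtained from $S$ by attaching such an annulus along each boundary curve, which is manifestly diffeomorphic to $\Int(S)$. As a preliminary observation, each $\overline{C}_i$ is a complete hyperbolic surface (being a closed subset of the complete space $L$) whose boundary is a single simple closed geodesic $\gamma_i$ by condition~(2); it has no cusps since $L$ does not, and its boundary is totally geodesic for the intrinsic metric.

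The heart of the argument is to show that $\pi_1(\overline{C}_i)$ is infinite cyclic, generated by $[\gamma_i]$. Every essential free homotopy class in a complete hyperbolic surface with totally geodesic (hence convex) boundary admits a closed geodesic representative for the intrinsic metric, and such intrinsic closed geodesics coincide with the closed geodesics of $L$ contained entirely in $\overline{C}_i$. By condition~(1), no such geodesic lies in $C_i$. On the other hand, any closed geodesic of $L$ that meets $\partial\overline{C}_i$ but is not a power of $\gamma_i$ would cross $\gamma_i$ transversally -- two distinct geodesics in a hyperbolic surface cannot share a sub-arc unless they coincide -- and would therefore leave $\overline{C}_i$. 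Hence every essential free homotopy class in $\overline{C}_i$ is a power of $[\gamma_i]$, whence $\pi_1(\overline{C}_i)=\langle[\gamma_i]\rangle$. The case $\pi_1(\overline{C}_i)=0$ is ruled out by Gauss--Bonnet: a disk with geodesic boundary in a hyperbolic surface would satisfy $-\mathrm{Area}(\overline{C}_i)=2\pi\chi=2\pi$, absurd.

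Having established $\pi_1(\overline{C}_i)\cong\Z$ with one compact geodesic boundary, the classification of surfaces yields $\overline{C}_i\cong S^1\times[0,\infty)$. Each boundary component of $S$ lies on the boundary of a unique $C_i$ (only one side is outside $S$ locally), and by condition~(2) each $\overline{C}_i$ touches exactly one boundary component of $S$. Therefore $L$ is $S$ with a half-open cylinder attached along each boundary curve, which is diffeomorphic to $\Int(S)$. The step I expect to require the most care is the identification between intrinsic closed geodesics of $\overline{C}_i$ and closed $L$-geodesics contained in $\overline{C}_i$, together with the transversality argument that prevents a non-boundary closed geodesic of $L$ from being trapped inside $\overline{C}_i$; both rely on standard but delicate facts about convexity in hyperbolic surfaces.
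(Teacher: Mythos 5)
Your proof has a genuine gap: you assert, citing condition~(2), that the boundary of $\overline{C}_i$ ``is a single simple closed geodesic $\gamma_i$''. Condition~(2) only says that $\partial\overline{C}_i$ is \emph{connected}; it does not say that it is \emph{compact}. In the generality of this proposition (and in the way it is actually used in the paper, where $S$ is the image of a geometric direct limit $\overline{\Ss}^\alpha$ and, by Proposition~\ref{p:geom_limit}, the boundary of $S$ may contain geodesics ``closed or not''), the connected boundary of $\overline{C}_i$ can be a \emph{complete geodesic line} rather than a circle. In that case $\overline{C}_i$ is a hyperbolic half-plane: it is simply connected, has no closed geodesic, has a connected geodesic boundary, and is not covered by your analysis. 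Your Gauss--Bonnet step does not rule this out either, since that argument applies to a \emph{compact} disk with geodesic boundary, whereas the half-plane is non-compact and Gauss--Bonnet gives no contradiction there. Thus your claim that every $\overline{C}_i$ is a half-open annulus $S^1\times[0,\infty)$ is false in general, and the final assembly step (``$L$ is $S$ with a half-open cylinder attached along each boundary curve'') is stated too narrowly.

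For comparison, the paper's proof passes to a lift $\widetilde{C}_i$ of $C_i$ in the universal cover, shows the lift is a geodesically convex region with connected boundary (hence a half-plane upstairs), and then, using that $\overline{C}_i$ has no cusp, no interior closed geodesic, and one geodesic boundary component, concludes $\pi_1(\overline{C}_i)$ is trivial \emph{or} infinite cyclic, so that $\overline{C}_i$ is a half-plane \emph{or} a funnel. It then concludes via a deformation-retraction argument (transport along geodesics orthogonal to $\partial S$) which works uniformly in both cases. Your argument is close in spirit for the funnel case, but you should add the half-plane case explicitly: observe that $\partial\overline{C}_i$ is either a simple closed geodesic or a complete geodesic line, treat the latter (where $\pi_1(\overline{C}_i)$ is trivial and $\overline{C}_i$ is a half-plane), and then note that in either case $\overline{C}_i$ deformation retracts to its boundary, so that the union still gives $\Int(S)$.
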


\begin{proof}
Let $C_i$ be a connected component of $L\moins\overline{S}$ and $\widetilde{C}_i$ be a component of its preimage to the Poincar\'e disk $\D$ by uniformization. Its closure is geodesically convex and has geodesic boundary (argue like in the proof of \cite[Lemma 4.1.]{Casson_Bleiler}).

Moreover, we can prove that its boundary is connected so this is a half plane. In order to see this we use that the boundary $\partial\overline{C}_i$ is connected so if the closure of $\widetilde{C}_i$ had various boundary components, there would exist a geodesic ray between two of them projecting down to a geodesic loop inside $C_i$. Such a loop cannot be isotopic to the boundary of $C_i$, thus contradicting the first hypothesis.

Since $\overline{C}_i$ has no cusp, no interior closed geodesic and only one geodesic boundary component, its fundamental group (which equals the fundamental group of $C_i$) must be trivial or cyclic generated by the translation about the geodesic boundary.

We deduce from this that $\overline{C}_i$ must be a hyperbolic half-plane or a funnel with geodesic boundary. Using the transport on geodesics orthogonal to $\partial S$ and basic Morse theory we see that all manifolds defined as $\{x;\dist(x,S)\leq r\},\,r>0$ are diffeomorphic to $S$ so their interiors are all diffeomorphic to $\Int(S)$. We deduce that $L$ is diffeomorphic to $\Int(S)$.
\end{proof}

\subsection{Noncompact surfaces}\label{ss.classifsurfaces}

\paragraph{ {\bf Ends of a space --}} Let us recall the definition of an end of a connected topological space $X$. Let $(K_n)_{n\in\N}$ be an exhausting and increasing sequence of compact subsets of $X$. An \emph{end} of $X$ is a decreasing sequence
$$\cC_1\supset\cC_2\supset...\supset\cC_n\supset...$$
where $\cC_n$ is a connected component of $X\moins K_n$. We denote by $\cE(X)$ the \emph{space of ends} of $X$. It is independent of the choice of $K_n$.

The space of ends of $X$ possesses a natural topology which makes it a compact subspace of a Cantor space. An open neighbourhood of an end $e=(\cC_n)_{n\in\N}$ is an open set $V\dans X$ such that $\cC_n\dans V$ for all but finitely many $n\in\N$.

\paragraph{ {\bf Classifying triples --}} In what follows, a \emph{classifying triple} is the data $\tau=(g,\cE_0,\cE)$ of
\begin{itemize}
\item a number $g\in\N\cup\{\infty\}$;
\item a pair of nested spaces $\cE_0\dans\cE$ where $\cE$ is a nonempty, totally disconnected and compact topological space; which satisfy
\item $g=\infty$ if and only if $\cE_0\neq\vide$.
\end{itemize}

Say that two classifying triples $\tau=(g,\cE_0,\cE)$ and $\tau'=(g',\cE_0',\cE')$ are \emph{equivalent} if $g=g'$ and if there exists a homeomorphism $h:\cE\to\cE'$ such that $h(\cE_0)=\cE_0'$.

\paragraph{{\bf Noncompact surfaces --}} We now recall the modern classification of surfaces as it appears in \cite{Ric}. The leaves of a hyperbolic surface laminations are orientable so we are only interested in the classification of \emph{orientable} surfaces.

Recall that an end $e=(\cC_n)_{n\in\N}$ of $\Sigma$ is \emph{accumulated by genus} if for every $n\in\N$, the surface $\cC_n$ has genus. The ends accumulated by genus form a compact subset that we denote by $\cE_0(\Sigma)\dans\cE(\Sigma)$. In our terminology the triple $\tau(\Sigma)=(g(\Sigma),\cE_0(\Sigma),\cE(\Sigma))$ is a classifying triple.

\begin{theorem}[Classification of surfaces]\label{classification}
Two orientable noncompact surfaces $\Sigma$ and $\Sigma'$ are homeomorphic if and only if their classifying triples $\tau(\Sigma)$ and $\tau(\Sigma')$ are equivalent.

Moreover for every classifying triple $\tau$ there exists an orientable noncompact surface $\Sigma$ such that $\tau(\Sigma)$ is equivalent to $\tau$.
\end{theorem}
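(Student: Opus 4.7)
The plan is to establish the two directions of the classification separately, following the approach of Ker\'ek\-j\'art\'o--Richards.

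\emph{Invariance (easy direction).} First I would check that any homeomorphism $\varphi:\Sigma\to\Sigma'$ induces an equivalence of classifying triples. The genus is a topological invariant defined purely via compact exhaustions; and the definition of $\cE(\Sigma)$ depends only on the topology, so $\varphi$ induces a canonical homeomorphism $\bar\varphi:\cE(\Sigma)\to\cE(\Sigma')$. An end $e$ lies in $\cE_0(\Sigma)$ precisely when every neighbourhood of $e$ has positive genus, which is a topological property, hence $\bar\varphi(\cE_0(\Sigma))=\cE_0(\Sigma')$.

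\emph{Realization.} Given a classifying triple $\tau=(g,\cE_0,\cE)$, I would construct an explicit model surface $\Sigma_\tau$ as follows. Embed $\cE$ as a compact, totally disconnected subset of the sphere $S^2$ (possible because such spaces embed in a Cantor set which embeds in $S^2$). Set $\Sigma^{(0)}_\tau := S^2\setminus\cE$; the ends of $\Sigma^{(0)}_\tau$ are canonically identified with $\cE$. If $g<\infty$ (so by definition $\cE_0=\vide$), attach $g$ handles inside a small disk disjoint from $\cE$. If $g=\infty$, for each point $e\in\cE_0$ choose a sequence of pairwise disjoint closed disks in $S^2$ that cluster exactly at $e$ and at no other point of $\cE$, and in each such disk remove a pair of subdisks and identify their boundaries to attach a handle. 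A direct verification shows $\tau(\Sigma_\tau)\sim\tau$.

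\emph{Uniqueness.} The hard part is to show that any surface $\Sigma$ with $\tau(\Sigma)\sim\tau$ is homeomorphic to $\Sigma_\tau$. Here I would use a block exhaustion argument. Choose a compact exhaustion $(K_n)_{n\in\N}$ of $\Sigma$ by connected subsurfaces with geodesic (or at least tame) boundary such that each $K_{n+1}\setminus\Int(K_n)$ decomposes as a disjoint union of \emph{blocks}, each of which is either a pair of pants, an annulus, or a once-punctured torus. The complementary components of $K_n$ form a nested sequence of clopen partitions whose limit recovers $\cE(\Sigma)$, and the blocks that contribute genus encode $\cE_0(\Sigma)$. Using the given equivalence of classifying triples, I would build a compatible exhaustion $(K_n')$ of the model $\Sigma_\tau$, then construct a homeomorphism $\Sigma\to\Sigma_\tau$ by an inductive back-and-forth argument, extending a homeomorphism $K_n\to K_n'$ across each block using the classification of compact surfaces with boundary (determined by genus plus number of boundary components) together with the Schoenflies theorem for matching boundary circles.

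The main obstacle will be the combinatorial matching of blocks: one must arrange the exhaustions on the two sides so that the tree of blocks, together with its labeling by genus and its limiting clopen partitions of the space of ends, coincide. This reduces to showing that any two compact, totally disconnected spaces $\cE,\cE'$ with distinguished closed subsets $\cE_0,\cE_0'$ related by a homeomorphism of pairs admit compatible sequences of clopen partitions; this in turn follows from a standard refinement argument since the Stone duality of $(\cE,\cE_0)$ is captured by a countable Boolean algebra, and the back-and-forth can be performed on this algebraic side before being transferred to the geometric side.
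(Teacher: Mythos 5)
The paper does not prove this statement at all; it simply recalls the classical Ker\'ekj\'art\'o--Richards classification and cites Richards' 1963 paper (the text reads ``We now recall the modern classification of surfaces as it appears in \cite{Ric}''). So there is no internal argument to compare against; your proposal amounts to reconstructing the standard external proof. The overall architecture you lay out --- topological invariance of $(g,\cE_0,\cE)$, realization by embedding $\cE$ into $S^2$ and attaching handles, uniqueness via a nested exhaustion and a back-and-forth on clopen partitions of the end space --- is the right one and matches the reference in spirit. The invariance direction is fine as written.

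There are, however, two concrete slips that would need repair. In the realization step, attaching handles ``for each point $e\in\cE_0$'' along a sequence of disks clustering at $e$ alone cannot be done simultaneously when $\cE_0$ is uncountable: you would need uncountably many pairwise disjoint families of disks, one per point. The standard fix is to choose a single countable family of pairwise disjoint disks in $S^2\setminus\cE$ whose accumulation set in $\cE$ is exactly the closed set $\cE_0$ (take disks shrinking toward a countable dense subset of $\cE_0$), and attach one handle per disk. In the uniqueness step, the ``once-punctured torus'' cannot occur as a block of $K_{n+1}\setminus\Int(K_n)$ in a good exhaustion. If $K_n\subset\Int(K_{n+1})$, $K_{n+1}$ is connected, and no component of $\Sigma\setminus K_n$ is compact (which one always arranges by filling in unnecessary holes), then every component of $K_{n+1}\setminus\Int(K_n)$ has at least one boundary circle on $\partial K_n$ and at least one on $\partial K_{n+1}$, hence at least two boundary components. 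The genus-carrying block should be a torus with two boundary circles (or, alternatively, one can keep all blocks of genus zero and create genus by gluing two outgoing boundary circles of a pair of pants back to each other one level up). With those corrections, and with the observation that the back-and-forth must respect not only the Boolean algebra of clopen sets but also the distinguished closed subset $\cE_0$ and the count of handles fed into each clopen piece, your outline is essentially Richards' argument and would go through.
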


\begin{remark}
As a direct consequence, there are uncountably many different topological types of open surfaces as there exists uncountably many closed subsets of the Cantor set.
\end{remark}

\subsection{Direct limits of surfaces}\label{ss.directlimit}

\paragraph{ {\bf Inclusions of surfaces --}} Let $S$ and $S'$ be two surfaces with boundary. We say that a map $f:S\to S'$ is an \emph{inclusion} if the two conditions below are satisfied.

\begin{itemize}
\item $f$ is continuous and injective.
\item $f$ maps every boundary component of $S$ to a boundary component of $S'$ or inside the interior of  $S'$.
\end{itemize}
When $\overline{S'\setminus f(S)}$ admits a hyperbolic structure with geodesic boundary we call $f$ a \emph{good inclusion}.

When we specify two points $x$ and $x'$ on $S$ and $S'$ respectively, a (good) inclusion $f:(S,x)\to(S',x')$ is supposed to map $x$ to $x'$.

\begin{remark}\label{rem_isom_embedding_proper_inclusion}
If $S$ and $S'$ are hyperbolic surfaces with geodesic boundary, then any isometric embedding $f:S\to S'$ is a good inclusion.
\end{remark}

\paragraph{ {\bf Direct limits --}} Let $(S_n)_{n\in\N}$ be a sequence of surfaces with boundary and a chain of inclusions $\{j_n:S_n\to S_{n+1}\}$. The \emph{direct limit} of this chain is the quotient
$$S_\infty=\underrightarrow{\lim}\{j_n:S_n\to S_{n+1}\}= \left.\bigsqcup S_n\right/\sim$$
where $\sim$ is the equivalence relation generated by $\forall x\in S_n,\,x\sim j_n(x)$. The space $S_\infty$ is naturally
 a topological surface, possibly with boundary. By definition of inclusions, a point of 
 $\partial S_\infty$ corresponds to a point $x$ of the boundary of some $S_{n_0}$ such that for every 
 $n\geq n_0$ the map $j_{n-1}\circ\ldots\circ j_{n_0}(x)$ belongs to the boundary of $S_n$. Moreover there exists an inclusion $J_n:S_n\to S_\infty$. 
 
 Direct limits enjoy the following universal property.

\begin{theorem}[Universal property]\label{t.universal_property}
Let $L$ be a surface. Assume that there exists a sequence of inclusions $\phi_n:S_n\to L$ which satisfy the compatibility condition
$$\phi_n=\phi_{n+1}\circ j_n.$$
Then there exists an inclusion $\phi:S_\infty\to L$ such that for every $n\in\N$
$$\phi_n=\phi\circ J_n.$$
\end{theorem}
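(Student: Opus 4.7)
My plan is to construct $\phi$ via the universal property of the quotient defining $S_\infty$. First, I assemble the maps $\phi_n$ into a single map $\tilde\phi : \bigsqcup_n S_n \to L$ by setting $\tilde\phi|_{S_n} = \phi_n$. The compatibility hypothesis $\phi_n = \phi_{n+1} \circ j_n$ says precisely that for every $x \in S_n$, the two generators $x$ and $j_n(x)$ of the equivalence relation $\sim$ have the same image under $\tilde\phi$. Consequently $\tilde\phi$ descends to a well-defined map $\phi : S_\infty \to L$ satisfying $\phi \circ J_n = \phi_n$; continuity of $\phi$ follows from the universal property of the quotient topology on $S_\infty$.

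For injectivity of $\phi$, I would argue as follows. Suppose $\phi([x]) = \phi([y])$ with representatives $x \in S_n$ and $y \in S_m$, and assume without loss of generality that $n \leq m$. Setting $x' = j_{m-1} \circ \cdots \circ j_n(x) \in S_m$ we have $[x'] = [x]$ in $S_\infty$, and the compatibility yields $\phi_m(x') = \phi_n(x) = \phi_m(y)$. Since $\phi_m$ is an inclusion, hence injective, we obtain $x' = y$, so $[x] = [y]$ in $S_\infty$.

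Finally, to verify the boundary condition I would proceed as follows. Consider a boundary component $B$ of $S_\infty$. Using the description of $\partial S_\infty$ recalled in the text, pick a point $p \in B$ together with a representative $x \in \partial S_{n_0}$ whose images $j_{n-1} \circ \cdots \circ j_{n_0}(x)$ remain in $\partial S_n$ for all $n \geq n_0$, and let $B_0 \subseteq \partial S_{n_0}$ be the boundary component through $x$. Because each $j_n$ is an inclusion, the iterated image $B_n := (j_{n-1} \circ \cdots \circ j_{n_0})(B_0)$ is connected and meets $\partial S_n$, so it lies in a single boundary component of $S_n$. Since $\phi_{n_0}$ is an inclusion, $\phi_{n_0}(B_0)$ is contained either in a single boundary component $\ell$ of $L$ or inside $\Int(L)$. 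The containment $\phi_n(B_n) \supseteq \phi_{n_0}(B_0)$, combined with the disjointness of $\partial L$ and $\Int(L)$ and the disjointness of distinct boundary components of $L$, propagates this alternative to all $n \geq n_0$ by induction. Hence $\phi(B) = \bigcup_{n \geq n_0} \phi_n(B_n)$ lies either in $\ell$ or in $\Int(L)$, showing that $\phi$ is an inclusion.

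No step looks like a genuine obstacle: this is the formal universal-property verification, and the only mild care needed is in tracking boundary components coherently through the tower in the last step.
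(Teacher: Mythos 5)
The paper does not actually prove this theorem: it states it as a standard fact about direct limits and moves on. So there is no "paper's proof" to compare against. On its own merits, your first two steps are clean and correct: factoring $\tilde\phi$ through the quotient and verifying injectivity of $\phi$ using injectivity of $\phi_m$ and the fact that representatives can be pushed to a common floor is exactly the right argument.

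The boundary step, however, has a gap. You define $B_n := (j_{n-1}\circ\cdots\circ j_{n_0})(B_0)$ as the \emph{image} of $B_0$, and then conclude with "$\phi(B)=\bigcup_{n\geq n_0}\phi_n(B_n)$." But by the very compatibility relation $\phi_n\circ j_{n-1}\circ\cdots\circ j_{n_0}=\phi_{n_0}$ you have $\phi_n(B_n)=\phi_{n_0}(B_0)$ for every $n$, so the union you write down is just $\phi_{n_0}(B_0)$ (and the asserted containment $\phi_n(B_n)\supseteq\phi_{n_0}(B_0)$ is really an equality, so the "propagation by induction" is vacuous). Thus what you are actually claiming is that $\phi(B)=\phi_{n_0}(B_0)$, equivalently that $B=J_{n_0}(B_0)$ — and this is the real content of the step, which you never justify. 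It is not obvious a priori that a boundary component of $S_\infty$ cannot meet the $J_n$-images of several different boundary components of the $S_n$, nor that a later floor cannot contribute extra boundary points to $B$.

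The claim is true in the setting the paper uses (compact $S_n$), and you can close the gap as follows: since $B_0$ is compact and $J_{n_0}$ is continuous and injective into the Hausdorff surface $S_\infty$, the image $J_{n_0}(B_0)$ is an embedded circle, hence a nonempty, open-and-closed subset of the connected $1$--manifold $B$; therefore $B=J_{n_0}(B_0)$, and then $\phi(B)=\phi_{n_0}(B_0)$ lies in a single boundary component of $L$ or in $\Int(L)$ because $\phi_{n_0}$ is an inclusion. If you want the statement for possibly noncompact $S_n$ (the theorem in the paper is phrased for general surfaces) you need a different argument, since boundary components may be lines and the embedded-circle argument no longer applies; in that generality you would have to argue that the preimage of $B$ in $\partial S_n$ is a single boundary component $\tilde B_n$ with $j_n(\tilde B_n)\subseteq\tilde B_{n+1}$, and take $\phi(B)=\bigcup_n\phi_n(\tilde B_n)$, with the dichotomy propagating as you describe.
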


\paragraph{ {\bf Open direct limits --}} Let $(S_n)_{n\in\N}$ be a sequence of surfaces with boundary and a chain of inclusions $\{j_n:S_n\to S_{n+1}\}$. The \emph{open direct limit} of this chain is by definition the interior of the direct limit. This is by definition an open surface.

\paragraph{ {\bf Geometric direct limits --}} We now assume that we are given a sequence $(S_n)_{n\in\N}$ of \emph{compact hyperbolic surfaces with geodesic boundary} and a chain of \emph{isometric embeddings} $\{j_n:S_n\to S_{n+1}\}$. This is in particular a chain of good inclusions (see Remark \ref{rem_isom_embedding_proper_inclusion}). The direct limit $S_\infty$ of this chain might not be complete: imagine the case of a sequence of surfaces obtained by gluing hyperbolic pairs of pants whose boundary components have length growing very fast. 

\begin{definition}
\label{d.geom_direct_limit}
The \emph{geometric direct limit} of the chain $\{j_n:S_n\to S_{n+1}\}$  of isometric embeddings of compact hyperbolic surfaces with geodesic boundary is the surface denoted by $\overline{S}_\infty$ and defined as the metric completion of the direct limit $S_\infty$.
\end{definition}

\begin{proposition}
\label{p:geom_limit}
The geometric direct limit of a chain $\{j_n:S_n\to S_{n+1}\}$ of isometric embeddings  of compact hyperbolic surfaces with geodesic boundary is a hyperbolic surface whose boundary components are disjoint geodesics (that can be closed or not) and satisfies that $\Int(\overline{S}_\infty)=\Int(S_\infty)$, the open direct limit.
\end{proposition}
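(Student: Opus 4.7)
The plan is to first endow $S_\infty$ with its tautological length metric coming from the isometric embeddings, show that with this metric $S_\infty$ is itself an (a priori incomplete) hyperbolic surface with geodesic boundary, and then argue that passing to $\overline{S}_\infty$ only adjoins points sitting on new geodesic boundary components. For the first part, at each $x \in S_\infty$ I would pick a representative $x_{n_0} \in S_{n_0}$. Under an isometric embedding of compact hyperbolic surfaces with geodesic boundary, a boundary component either stays as a whole component of the next boundary (two distinct simple closed geodesics cannot be properly nested) or gets absorbed into the interior as a simple closed geodesic of the next surface. Accordingly either the boundary component of $S_{n_0}$ through $x_{n_0}$ is absorbed at some later stage $n_1$, in which case a small open hyperbolic disk around the image of $x_{n_0}$ in $\Int(S_{n_1})$ yields a disk chart for $x$ in $S_\infty$; or that boundary component persists for every $n\ge n_0$ and $x$ inherits a half-disk chart directly from $S_{n_0}$. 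So $S_\infty$ carries a hyperbolic atlas with totally geodesic boundary $\partial S_\infty$, the latter being a disjoint union of closed geodesics.

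For $\overline{S}_\infty$, I would take $p \in \overline{S}_\infty \setminus S_\infty$ with a Cauchy sequence $(x_k)$ in $S_\infty$ converging to it. Compactness of each $S_n$ forces the $x_k$ to escape every fixed level, so after extraction $x_k \in S_{n_k}$ with $n_k \to \infty$. Fixing a basepoint $x_0 \in S_0$ and choosing minimizing geodesic segments $\gamma_k$ from $x_0$ to $x_k$, Arzel\`a--Ascoli inside a closed bounded subset of $\overline{S}_\infty$ produces a limiting minimizing geodesic $\gamma \colon [0, d(x_0,p)] \to \overline{S}_\infty$ ending at $p$. For every $t < d(x_0,p)$ the point $\gamma(t)$ lies in $S_\infty$ and the first paragraph provides a local hyperbolic chart around it.

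The crux is to upgrade this local picture to a half-disk around $p$. In a hyperbolic chart centered at $\gamma(t)$ for $t$ very close to $d(x_0,p)$, the portion of $S_\infty$ lying in the chart is locally convex: every boundary component of every $S_n$ is totally geodesic, and gluings along such components preserve convexity. Its frontier accumulates on $p$, and in a two-dimensional space of constant curvature $-1$ a locally convex proper subset whose frontier reaches $p$ from one side must be bounded near $p$ by a single geodesic through $p$. Completing along this geodesic adjoins $p$ to $S_\infty$ with a half-disk neighborhood. Thus $\overline{S}_\infty$ is a hyperbolic surface with geodesic boundary: the components of $\partial S_\infty$ already present in $S_\infty$ are closed geodesics, while the new components arising in the completion are complete geodesic lines (these cannot be closed, since closed boundary geodesics already belong to $S_\infty$). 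Finally $\Int(\overline{S}_\infty) = \Int(S_\infty)$ is immediate, since both sides consist of exactly those points admitting an open hyperbolic disk chart, which by the analysis above is $S_\infty \setminus \partial S_\infty$.

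The hardest part will be the locally-convex/geodesy argument identifying $\overline{S}_\infty$ near a new point $p$. It rests on two rigidities specific to our setting: every boundary component of each $S_n$ is totally geodesic, and the curvature is constantly $-1$. Together these rule out cusps, branchings, or other non-manifold behavior in $\overline{S}_\infty \setminus S_\infty$.
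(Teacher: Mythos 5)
Your first paragraph --- the dichotomy for boundary components under an isometric embedding, and the ensuing chart argument giving $S_\infty$ a hyperbolic structure with geodesic closed boundary --- is correct. The gap, which you rightly flag as the hard part, is in the completion argument, and as sketched it does not close. Two concrete issues. First, the Arzel\`a--Ascoli step needs the points $\gamma_k(t)$ to stay in a fixed compact subset of $\overline{S}_\infty$; but the $\gamma_k$ live in $S_{n_k}$ with $n_k\to\infty$, and $\overline{S}_\infty$, while complete, is not yet known to be proper (local compactness near $p$ is precisely what is in question), so ``closed bounded subset of $\overline{S}_\infty$'' does not by itself deliver the compactness you need. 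Second, the claim that a locally convex subset whose frontier reaches $p$ from one side is bounded near $p$ by a single geodesic through $p$ is formulated inside a chart centered at $\gamma(t)$ with $t<d(x_0,p)$; but $p$ lies in no chart of $S_\infty$, so there is no ambient hyperbolic plane in which ``a geodesic through $p$'' makes sense until the very local structure at $p$ you are trying to establish is already in place. Making this intrinsic convexity rigidity precise is not visibly easier than the proposition itself.

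The paper avoids both difficulties by globalizing the convexity via the developing map: each $S_n$ is realized as $D_n/\Gamma_n$ with $\Gamma_n$ an increasing tower of Fuchsian groups and $D_n\subset\D$ their convex Nielsen cores, compatibly with the $j_n$. Then $S_\infty\cong D_\infty/\Gamma_\infty$ sits isometrically inside the complete, proper surface $\D/\Gamma_\infty$; the metric completion is simply the closure $\overline{D}_\infty/\Gamma_\infty$ there, and $\overline{D}_\infty$ is a closed convex subset of $\D$, so its boundary is geodesic by elementary plane hyperbolic geometry and $\Int(\overline{D}_\infty)=\Int(D_\infty)$ for free. Your ``locally convex'' heuristic is really the shadow of this global convexity, and to turn it into a proof one is essentially forced to develop the hyperbolic structure of $S_\infty$ into $\D$, which is the paper's route. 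If you want to stay intrinsic, you would need to first prove a properness lemma for $\overline{S}_\infty$ and then carry out the boundary rigidity at $p$ with care; either way it will not be shorter than passing to the universal cover.
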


\begin{proof}
An inductive argument using the uniformization theorem shows the following. There exist an increasing sequence of Fuchsian groups $\Gamma_1<\Gamma_2<\ldots\Gamma_n<\ldots$, an increasing sequence of connected domains with geodesic boundary of the Poincar\'e disk $\D$, denoted by $D_1\dans D_2\dans\ldots\dans D_n\dans\ldots$ (defined as the \emph{Nielsen cores} of the $\Gamma_n$, i.e. the convex hulls of their limits sets) and a sequence of isometries
$$\phi_n:S_n\to D_n/\Gamma_n$$
satisfying the following compatibility condition
$$\iota_n\circ\phi_n=\phi_{n+1}\circ j_n,$$
where $\iota_n:D_n/\Gamma_n\to D_{n+1}/\Gamma_{n+1}$ is the natural inclusion.

Let $D_\infty$ be the increasing union of the domains $D_n$ and $\Gamma_\infty$ be the increasing union of the groups $\Gamma_n$. The set $D_\infty$ is a convex set invariant by the Fuchsian group $\Gamma_\infty$ and $D_\infty/\Gamma$ is the direct limit of the inclusions $\iota_n:D_n/\Gamma_n\to D_{n+1}/\Gamma_{n+1}$. Using the universal property of direct limits (Theorem \ref{t.universal_property}) we see that $S_\infty$ is isometric to $D_\infty/\Gamma_\infty$.

The surface $D_\infty/\Gamma_\infty$ embeds isometrically inside the complete surface $\D/\Gamma_\infty$ so its metric completion, which is isometric to $\overline{S}_\infty$, is realized as the closure $\overline{D}_\infty/\Gamma_\infty$. The boundary of $D_\infty$ is geodesic (by convexity), its interior is precisely the interior of $D_\infty$, and these sets are $\Gamma_\infty$-invariant. This proves that the surface $\overline{D}_\infty/\Gamma_\infty$ satisfies the conclusion of the proposition, and hence $\overline{S}_\infty$ as well.
\end{proof}

\section{Illustrative examples}
\label{s.illustration}
\subsection{A lamination where every leaf is a disk} 

\paragraph{ {\bf Residual finiteness and geometry --}} Surface groups enjoy a property known as \emph{residual finiteness} (see \cite[III.18]{delaharpe} ). More precisely, given a closed hyperbolic surface $\Sigma$, and a non-trivial element $\gamma \in \pi_1(\Sigma)$ there exists a finite group $G$ and a morphism $\phi: \pi_1(\Sigma) \to G$ so that $\phi(\gamma)\neq 1$. In other words, every non-trivial element is disjoint from some finite index normal subgroup in $\pi_1(\Sigma)$. It is clear that this also implies that for any finite subset of $F \subset \pi_1(\Sigma) \setminus \{e\}$ there is a finite index normal subgroup $N \lhd \pi_1(\Sigma)$ such that $N \cap F = \emptyset$. From the geometric point of view this easily implies:

\begin{lemma}\label{l.res_large_rinj}
For every closed hyperbolic surface $\Sigma$ and every $K>0$ there exists a normal covering map $\hat p: \hat \Sigma \to \Sigma$ such that $\mathrm{sys}(\hat \Sigma) >K$. 
\end{lemma}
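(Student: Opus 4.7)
The plan is to translate the geometric statement into a statement about conjugacy classes in $\Gamma := \pi_1(\Sigma)$ and then invoke residual finiteness directly.

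\textbf{Step 1: Dictionary between covers and subgroups.} A finite normal cover $\hat p : \hat\Sigma \to \Sigma$ corresponds to a finite-index normal subgroup $N \lhd \Gamma$ with $\pi_1(\hat\Sigma) \cong N$. Since $\hat p$ is a local isometry, the translation length on $\mathbb{H}^2$ of an element of $N$ (viewed inside $\Gamma$) equals the length of the corresponding closed geodesic in $\hat\Sigma$. Consequently, $\mathrm{sys}(\hat\Sigma) > K$ if and only if every non-trivial element of $N$ has translation length strictly greater than $K$ on $\mathbb{H}^2$.

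\textbf{Step 2: Finitely many conjugacy classes to avoid.} Let $\mathcal{B}_K = \{\gamma \in \Gamma\setminus\{e\} : \ell(\gamma) \leq K\}$, where $\ell$ denotes translation length. Every element of $\mathcal{B}_K$ is a power $\delta^k$ of a primitive element $\delta$, and $\ell(\delta^k) = |k|\,\ell(\delta)$. Since $\Sigma$ is compact, there are only finitely many primitive closed geodesics of length $\leq K$, hence finitely many conjugacy classes of primitive $\delta$ with $\ell(\delta) \leq K$; and for each such $\delta$ only finitely many exponents $k$ satisfy $|k|\,\ell(\delta)\leq K$. Hence $\mathcal{B}_K$ is a finite union of conjugacy classes. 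Pick representatives $\gamma_1,\ldots,\gamma_m$ of these conjugacy classes. Because $N$ is normal, $N\cap\mathcal{B}_K = \emptyset$ if and only if $\gamma_i \notin N$ for $i=1,\ldots,m$.

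\textbf{Step 3: Apply residual finiteness.} Surface groups are residually finite, so for each $i$ there exists a finite-index normal subgroup $N_i \lhd \Gamma$ with $\gamma_i \notin N_i$. Set
\[
N \;=\; \bigcap_{i=1}^{m} N_i,
\]
which is still a finite-index normal subgroup of $\Gamma$ and avoids every $\gamma_i$. By Step 2, $N$ contains no element of $\mathcal{B}_K$, and by Step 1 the associated finite normal cover $\hat p : \hat\Sigma \to \Sigma$ satisfies $\mathrm{sys}(\hat\Sigma) > K$, as required.

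The only mildly non-routine point is Step 2 — recognizing that although the set of short elements in $\Gamma$ is infinite, it consists of only finitely many conjugacy classes, so a single finite intersection of residually finite quotients suffices. Everything else is the standard dictionary between covers and subgroups together with the fact that the covering is a local isometry.
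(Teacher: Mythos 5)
Your proof is correct and follows the same route as the paper: identify the finitely many conjugacy classes of $\pi_1(\Sigma)$ of translation length $\leq K$, then use residual finiteness to produce a finite-index normal subgroup avoiding them. Your version is in fact slightly more careful than the paper's one-line sketch, which mentions only \emph{simple} closed geodesics of length $\leq K$; since the systole of a cover is realized by a simple geodesic whose projection need not be simple, one should (as you do) exclude all elements of translation length $\leq K$, not just those of the simple ones.
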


\begin{proof}
Just take as finite set the elements of $\pi_1(\Sigma)$ corresponding to simple closed geodesics of length $\leq K$ and the covering associated to the normal finite index subgroup given by residual finiteness which will then \emph{open} all short curves to give the desired statement. 
\end{proof}

Note that the covering constructed in Lemma \ref{l.res_large_rinj} has injectivity radius $> K$ at every point.

\paragraph{ {\bf Tower of coverings --}} Thus we can consider a tower of regular covering maps $\T=\{p_n:\Sigma_{n+1}\to\Sigma_n \}$ such that the injectivity radius at every point of $\Sigma_n$ tends to infinity with $n$. See Figure \ref{fig:Residouille}.

\begin{figure}[h!]
\centering
\includegraphics[scale=0.4]{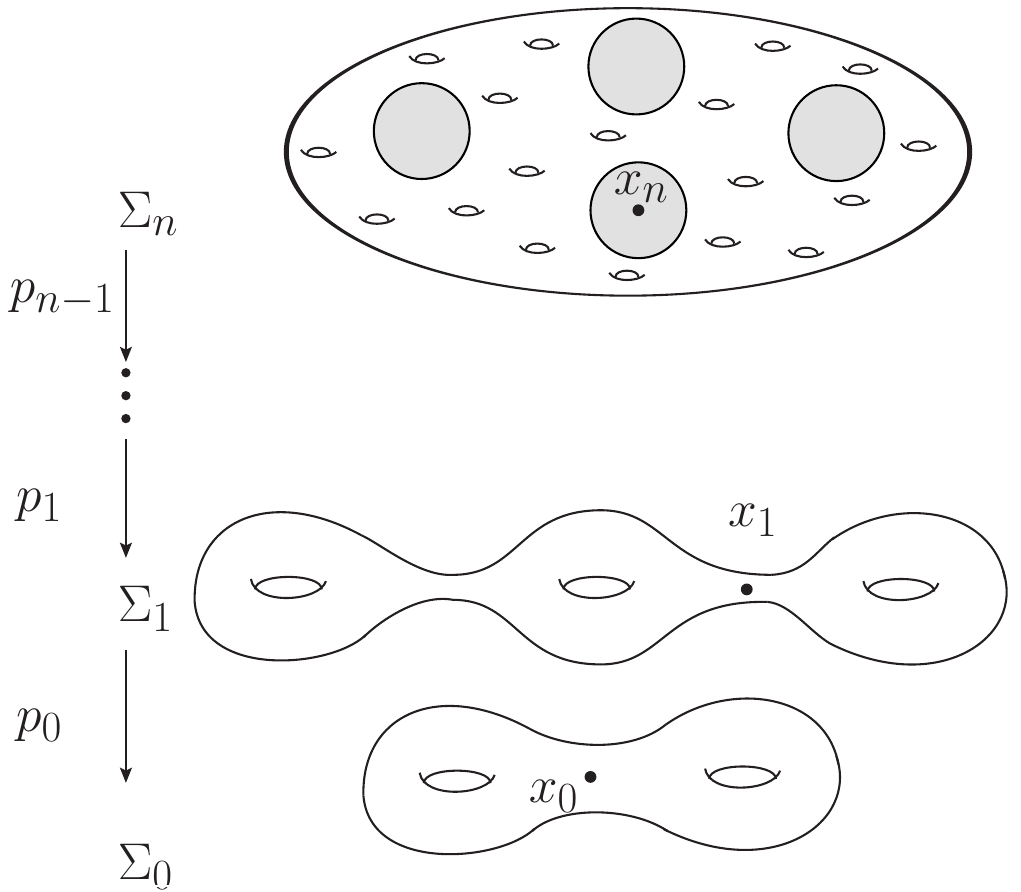}
\caption{Residual finiteness.}\label{fig:Residouille}
\end{figure}

The inverse limit of such a tower gives rise to a minimal lamination (see Proposition \ref{p.minimality}). Moreover Proposition \ref{p.Cheeger_Gromov} states that for every $K>0$ the $K$-neighbourhood of an element $\x\in \mathcal{L}$ inside its leaf is a copy of the $K$-neighbourhood of $x_n$ inside $\Sigma_n$ for $n$ large enough, which is an embedded disk of radius $R$. This means in particular that the leaf of every $\x$ is  an increasing union of disks: this must be a disk. This completes the construction of a minimal lamination all of whose leaves are hyperbolic disks. 

Notice that from the topological point of view, such examples are quite well known. For instance, one can take a totally irrational linear foliation in $\mathbb{T}^3$ to get a minimal foliation by planes. Also, the universal inverse limit construction, obtained as the inverse limit of \emph{all} finite coverings of a given surface gives rise to another minimal foliations by disks which is indeed the same as the one constructed above (as they are cofinal). We refer to \cite{Sullivan,Verjovsky}. See also \cite{Kapovich} for a realization of some laminations  by hyperbolic spaces as inverse limits of towers of coverings. From the point of view of our construction, nevertheless, this construction is quite illustrative, as it shows in the simplest possible context the strategy we want to follow to prove our main theorems, the key idea is to be able to construct a tower of regular coverings so that the injectivity radius grows in most places while we control that some other places get lifted carefully in order to get some leaves with topology. This will become clearer in our next example.

\subsection{Realizing a cylinder as a leaf} 
Let us show next how to construct a minimal lamination by Riemann surfaces for which one leaf is a cylinder, and every other leaf is a disk. This is not as simple as it seems and it contains one of the key difficulties in our whole construction. 

Using the same line of reasoning as above we would like to construct a tower of coverings such that for one sequence $\x=(x_n)_{n\in\N}$ of the inverse limits the $K$-neighbourhood of $x_n$ inside  $\Sigma_n$ is an embedded annulus when $n$ is large enough. The corresponding leaf would be an annulus. And furthermore,  when $\dist(x_n,y_n)\to\infty$ (so $\x$ and $\y$ lie on different leaves), we want the $K$-neighbourhood of $y_n$ to be a disk, when $n$ is large enough. The corresponding leaf would be a disk. See Figure \ref{fig:HW}.

\begin{figure}[h!]
\centering
\includegraphics[scale=0.4]{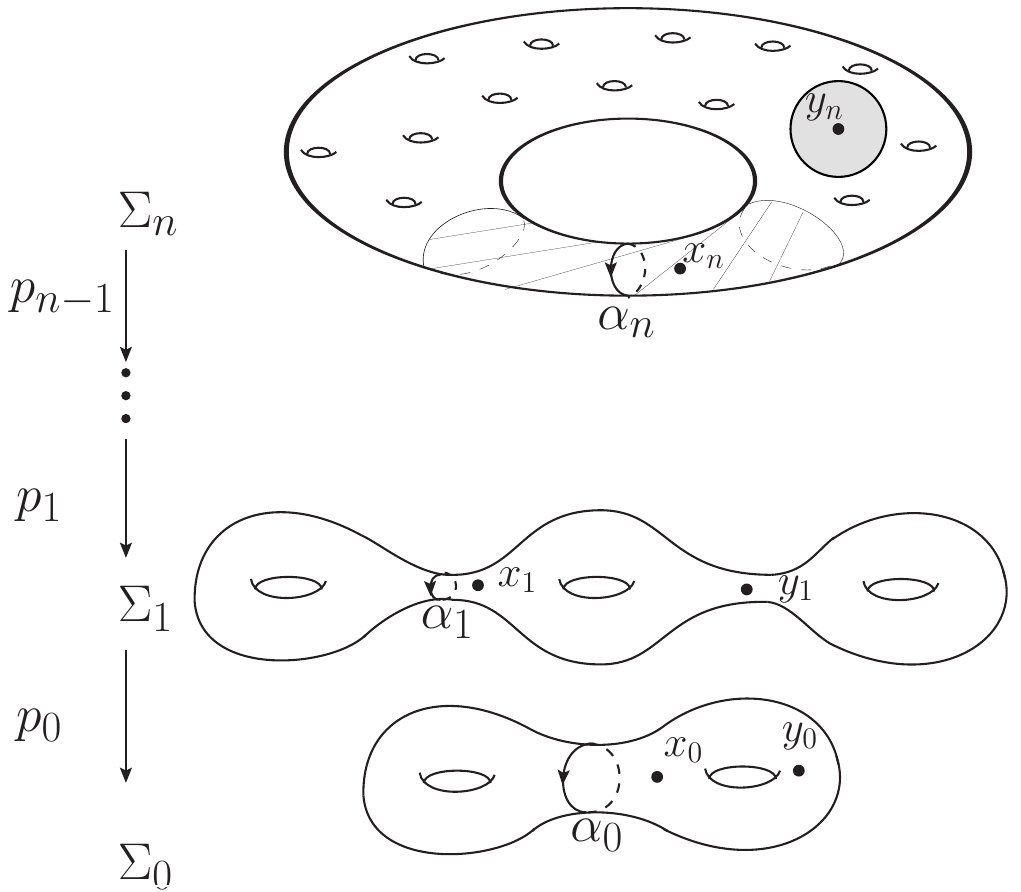}
\caption{Creating a cylinder.}\label{fig:HW}
\end{figure}

This is obtained by using our relative version of residual finiteness, Theorem \ref{teo.apendice}. For every simple closed geodesic $\alpha$ there exists a finite cover $\pi:\hat \Sigma\to\Sigma$ such that $\alpha$ has  a unique $(1:1)$-lift to $\hat \Sigma$, called $\hat \alpha$, and such that every other simple closed geodesic of $\hat \Sigma$ has length $\geq K$ where $K$ can be arbitrarily large. One could say that the \emph{second systole} of $\hat \Sigma$ is arbitrarily large.

Some basic facts of hyperbolic geometry (see Lemmas \ref{l.halfcollars} and \ref{l.inj_radius}) show that when $K$ is large enough, there is a collar about $\hat \alpha$ of width $\geq K/2$ and that points  away from $\hat \alpha$ have injectivity radii larger than $K$. This allows to implement the desired tower of finite coverings.

\begin{remark}\label{rem.obssuspS1}
It is possible to see that a representation $\rho: \pi_1(\Sigma) \to \mathrm{Homeo}(S^1)$ of  a surface group cannot produce, via the suspension construction, a foliation such that there is a unique annulus and the rest of the leaves are planes. This is because this would imply the existence of a non-abelian free subgroup\footnote{To see this it is enough to find two noncommuting elements in $\pi_1(\Sigma)$ which do not intersect the normal subgroup generated by an element which has a unique fixed point in $S^1$ (notice that only elements in the normal subgroup generated by this element can have fixed points since there is a unique non-planar leaf). To find such elements, one can look at the projection of $\pi_1(\Sigma)$ into its first homology group.} acting freely on $S^1$ which is impossible according to H\"older's Theorem \cite[Theorem 2.2.32]{Navas}. 
\end{remark}

More generally, one can show:

\begin{proposition}\label{rem.obstminimalfol}
Let $\mathcal{F}$ be a minimal\footnote{Sufficiently smooth, $C^{1+}$ is enough to use \cite{Alvarez_Yang}.} foliation by surfaces in a closed 3 manifold $M$ with all leaves of finite type and so that not every leaf is a disk, then it must have infinitely many leaves which are not disks. 
\end{proposition}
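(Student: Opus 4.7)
The plan is to argue by contradiction: suppose the set of non-disk leaves is non-empty but finite, say $\{L_1,\dots,L_m\}$. Since $L_1$ is a non-compact finite-type surface with non-trivial fundamental group, I can choose a non-trivial simple closed curve $\gamma\subset L_1$. Minimality of $\mathcal{F}$ forbids any compact leaf, and in particular any Reeb component (whose boundary torus would be a compact leaf), so $\mathcal{F}$ is Reebless. The Novikov--Reeb theorem for codimension one foliations of closed $3$--manifolds then ensures that the inclusion $\pi_1(L_1)\hookrightarrow\pi_1(M)$ is injective, so $\gamma$ is essential in $M$.

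Next, fix $p\in\gamma$, a small transversal arc $\tau$ through $p$, and let $h_\gamma:(\tau,p)\to(\tau,p)$ be the holonomy germ of $\gamma$. First consider the case where $h_\gamma$ coincides with the identity on some neighborhood $\tau_0$ of $p$. Then the transverse extension of $\gamma$ through the $\mathcal{F}$--saturated neighborhood of $\gamma$ over $\tau_0$ produces an embedded annulus $A\subset M$ whose intersection with any leaf $L_q$ through $q\in\tau_0$ is a simple closed curve $\gamma_q$ isotopic to $\gamma$ inside $M$, hence essential in $M$. If some such $L_q$ were a disk, $\gamma_q$ would bound a disk in $L_q$, therefore in $M$, contradicting the essentiality of $\gamma_q$ in $M$. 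Thus every leaf intersecting $\tau_0$ is non-disk, producing uncountably many non-disk leaves and contradicting finiteness.

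In the remaining case, $h_\gamma$ is non-trivial on every neighborhood of $p$. Here the $C^{1+}$ regularity enters via the \'Alvarez--Yang theorem \cite{Alvarez_Yang}: in a minimal $C^{1+}$ codimension one foliation of a closed $3$--manifold, the existence of a single leaf with non-trivial holonomy forces uncountably many leaves to carry non-trivial holonomy. The underlying mechanism is a Sacksteder--Duminy type argument: starting from the non-trivial germ $h_\gamma$, the $C^{1+}$ regularity and the minimal action of the holonomy pseudogroup on $\tau$ combine to produce some pseudogroup element with a hyperbolic fixed point, and then a horseshoe is built (using minimality to recurrently return orbits near the fixed point) whose uncountably many periodic points correspond to leaves with non-trivial holonomy, and thus with non-trivial fundamental group: these are non-disk leaves, once again contradicting finiteness.

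The main obstacle, and the only step that truly requires the $C^{1+}$ hypothesis, is precisely this last case: passing from the existence of one non-trivial holonomy germ to uncountably many non-disk leaves, which is where the \'Alvarez--Yang result is indispensable. The first (trivial holonomy) case is topological and relies only on Novikov--Reeb, while the delicate smoothness input is concentrated in the hyperbolic/horseshoe extraction in the non-trivial holonomy case.
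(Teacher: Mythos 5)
Your overall strategy (contradiction plus a case split on the holonomy germ of an essential curve $\gamma\subset L_1$) differs from the paper's, and the first case (trivial germ on a neighborhood) is sound: the foliated annulus carries $\gamma_q\simeq\gamma$ into every nearby leaf, Novikov incompressibility makes each $\gamma_q$ essential in $M$, and a transversal meets uncountably many leaves. The trouble is entirely in the second case, and that is exactly where the real content of the proposition lives.

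First, the citation is wrong. The \'Alvarez--Yang theorem invoked by the paper is a statement about the \emph{foliated geodesic flow} for a foliation by negatively curved leaves: it produces a hyperbolic (SRB) measure whose support is saturated by strong unstable manifolds, together with infinitely many periodic geodesics. It says nothing of the form ``one leaf with nontrivial holonomy $\Rightarrow$ uncountably many leaves with nontrivial holonomy'' in a minimal $C^{1+}$ foliation, and I am not aware of such a statement; the Sacksteder mechanism you allude to concerns exceptional minimal sets and in any case does not produce it for free. Second, even granting a horseshoe in the holonomy pseudogroup, a horseshoe has only \emph{countably} many periodic points, not uncountably many; and since in a minimal foliation every leaf meets the transversal in a dense countable set, countably many periodic points on $\tau$ can a priori all lie on a single leaf. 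So your argument does not rule out that all the holonomy you produce is carried by $L_1$ alone. Third, you never use the hypothesis that every leaf has finite type, whereas the paper uses it in an essential way (finite type $+$ hyperbolic $\Rightarrow$ all closed geodesics lie in a compact core of the leaf, which is what prevents the periodic geodesics in the support of the SRB measure from accumulating inside a single leaf transversally). An argument that proves the statement without the finite-type hypothesis should make you suspicious.

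For comparison, the paper's proof goes as follows: a transverse invariant measure is ruled out by the argument you give in your first case (lifting an essential loop to nearby leaves and using Novikov); Candel's theorem then gives a leafwise metric of negative curvature; \'Alvarez--Yang's actual theorem yields an SRB measure for the foliated geodesic flow with infinitely many periodic orbits, i.e.\ closed geodesics in leaves; and finite type is used to show these cannot all lie in finitely many leaves. If you want to salvage the holonomy route, you would have to (i) supply a correct reference for the dynamical dichotomy you need in the minimal $C^{1+}$ setting, and (ii) show that the nontrivial holonomy you produce is distributed over infinitely many \emph{distinct} leaves, which is precisely the nontrivial step.
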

\begin{proof}
To see this, notice first that in this context there cannot be a transverse invariant measure: if a minimal foliation has a transverse invariant measure and one leaf is not a disk, then infinitely many leaves must have non-trivial fundamental group, notice that one can lift a non-trivial loop to nearby leaves, and these cannot become homotopically trivial in their leaves because of Novikov's theorem (recall that a minimal foliation cannot have a Reeb-component). 

Therefore Candel's theorem applies and there is a smooth Riemannian metric on $M$ such that leaves have negative curvature everywhere (see for example \cite[Theorem B]{Alvarez_Yang}). Thus, one can apply \cite[Theorem A]{Alvarez_Yang} to get a hyperbolic measure for the \emph{foliated geodesic flow} which produces an infinite number of periodic orbits and each corresponds to a non-trivial closed geodesic in some leaves. This is produced by a measure which has the SRB property (in particular its support is saturated by strong unstable manifolds) and therefore cannot be supported in finitely many leaves (see \cite[Proposition 3.1 (3)]{Alvarez_Yang}). 

If all leaves have finite topological type and are hyperbolic, since the injectivity radius must be bounded from below it follows that all closed geodesic in the leaves must lie in a compact core inside each leaf. In particular, if one looks at an accumulation point along the transverse direction in the support of the measure not all periodic orbits of the foliated geodesic flow can belong to the same leaf. 

This implies that there are infinitely many leaves with non-trivial topology. 
\end{proof}

It seems reasonable to expect the previous result to hold without any regularity assumption nor that assuming that all leaves have finite topological type, but we decided not to pursue this as it is not central for the results of this paper.  

\subsection{Realizing the Loch-Ness monster} 

Assume now that we wish to construct a minimal lamination for which one leaf is a Loch-Ness monster (i.e. has one end and infinite genus) and every other leaf is  a disk. In such an example, surfaces of finite and infinite topological type will coexist inside the same minimal hyperbolic surface lamination.

The strategy will be similar. We need to construct a tower of coverings $\mathbb{T} = \{ p_{n} : \Sigma_{n+1} \to \Sigma_n\}$ with the following properties (see Figure \ref{fig:lochnouille}): \begin{figure}[h!]
\centering
\includegraphics[scale=0.4]{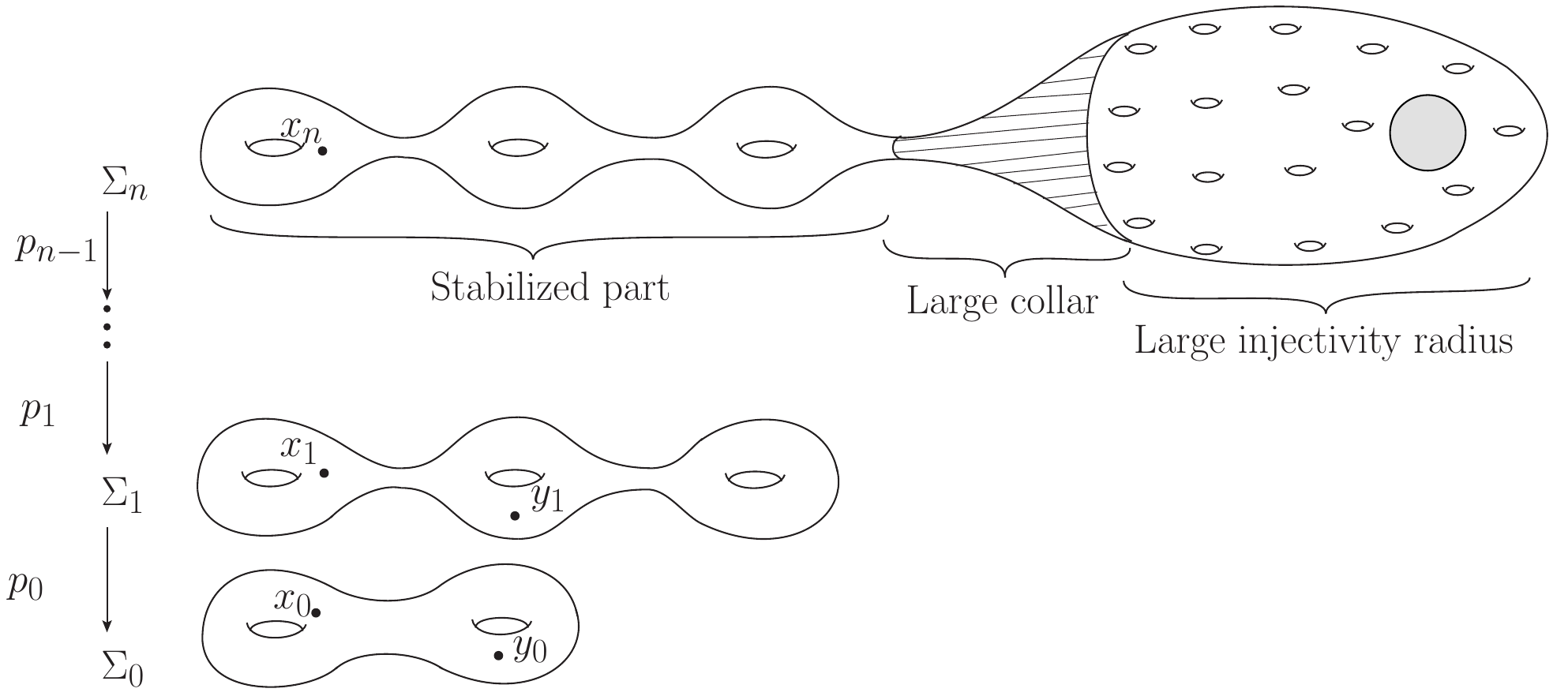}
\caption{Realizing a Loch-Ness monster.}\label{fig:lochnouille}
\end{figure}

\begin{enumerate}
\item\label{enum1} Each $\Sigma_n = S_n \cup X_n$ where $S_n$ and $X_n$ are subsurfaces with geodesic boundary and disjoint interiors (compare with admissible decompositions defined in \S \ref{ss.relative}). 
\item\label{enum2} The surface $S_n$ is connected, with genus $n$ and one boundary component. \emph{This is the part we want to stabilize.} Moreover, each surface $S_n$ admits a $(1:1)$-lift into a subsurface $S_n^\ast\subset S_{n+1}$. 
\item\label{enum3} The internal systole $\sigma_n$ of $\Sigma_n\setminus S_n^\ast$ (see Definition \ref{d.collars}) grows to $\infty$ with $n$.
\end{enumerate}

The fact that such a tower can be constructed relies on a strengthening of the relative residual finiteness mentioned in the previous section that is obtained in Theorem \ref{t.finitecovering}. A similar argument as above, provides the desired construction. 

Consider a sequence $\x=(x_n)_{n\in\N}\in\mathcal{L}$ so that $x_n\in S_n$ and $p_{n}(x_{n+1})=x_{n}$ for every $n\in\N$. In this case the leaf through $\x$ is a Loch-Ness monster (this follows from Proposition \ref{p:topology_leaves}). On the other hand, the injectivity radius over any sequence $\y=(y_n)_{n\in\N}\in\cL$ such that $\dist(x_n,y_n)$ is unbounded, goes to infinity with $n$ (see Lemma \ref{l:discs}). This implies that every leaf different from that containing $\x$ is a disk.

\subsection{Combining both examples and some comments on cylinders}\label{ss.cylinders} 

It can be noticed from the proof of Theorem \ref{t.finitecovering} below that it is possible to adapt the relative residual finiteness in order to consider a tower of coverings $\mathbb{T}= \{ p_n : \Sigma_{n+1} \to \Sigma_n \}$ so that:

\begin{itemize}
\item Each $\Sigma_n = S_n \cup X_n$ where $S_n$ and $X_n$ are subsurfaces with geodesic boundary and disjoint interiors. 
\item The surface $S_n$ is connected, with genus $n$ and one boundary component and admits a $(1:1$)-lift into a subsurface  $S_n^\ast\subset	S_{n+1}$.
\item The surface $X_n$ has the property that it contains a unique simple closed geodesic $\alpha_n$ which has length smaller than $1$ but every other primitive closed geodesic of length smaller than $n$ has to be homotopic to either the boundary of $X_n$ or to $\alpha_n$ (we could say that the \emph{second internal systole} grows to infinity).  
\item The distance between $\partial S_n$ and $\alpha_n$ goes to infinity with $n$.
\end{itemize}

Notice in particular that $p_n$ must map $\alpha_{n+1}$ as a (1:1) covering of $\alpha_n$ for sufficiently large $n$. As in the previous examples, this construction will produce a leaf homeomorphic to a Loch-Ness monster corresponding to the sequence of subsurfaces $S_n$, and a leaf homeomorphic to an annulus corresponding to the sequence $\alpha_n$. Moreover, it can be shown that any other leaf is homeomorphic to a disk (this follows again from Lemma \ref{l:discs}).

In what follows we will extend this constructions in order to be able to produce several different possible laminations. As this example shows, the production of cylinders in the lamination is a bit different from the construction of the Loch-Ness monster as one requires the construction of surfaces in the coverings, and cylinders are detected by closed geodesics with large collar neighbourhoods. It turns out that a procedure similar to the one used to construct the Loch-Ness monster works for \emph{every} other surface (except the cylinder). It is possible to find a more cumbersome formalism that includes cylinders, but  in order to simplify the presentation, we will ignore cylinder leaves and leave the construction of laminations which also have cylinder leaves to the reader. 

\section{Toolbox for constructing finite coverings}\label{s.toolbox}

We now give the principal tool that we will use in order to implement the idea given in \S \ref{s.illustration}. This is a variation of the residual finiteness of surface groups.

\subsection{A relative version of residual finiteness}\label{ss.relative}

\paragraph{ {\bf The key tool --}} The following result will provide us with an essential tool for the proof of Theorem \ref{t.finitecovering} and will be used in several points.  Its proof is deferred to Appendix \ref{s.appendix}.

\begin{theoremc}\label{teo-HW}

Let $\Sigma$ be a closed hyperbolic surface, and let $\alpha\subset\Sigma$
  be a simple closed geodesic. Then, for all $K>0$, there exists a finite 
  covering $\pi:\hat \Sigma\to\Sigma$ such that
  \begin{itemize}
  \item $\hat \Sigma$ contains a non-separating simple closed geodesic such that $\pi(\hat \alpha)=\alpha$ and $\pi$ restricts to a homeomorphism on $\hat \alpha$;
  \item every simple closed geodesic which is not $\hat \alpha$ has length larger than $K$.
  \end{itemize}
\end{theoremc}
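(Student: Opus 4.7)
The plan is to prove Theorem~C in two stages: first, a reduction to the case where $\alpha$ already admits a non-separating one-to-one lift in some intermediate cover; and second, a cut-and-glue construction on that intermediate cover forcing every other simple closed geodesic to become long.

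For the first stage, apply Scott's LERF (subgroup separability) theorem \cite{Scott} to the cyclic subgroup $\langle\alpha\rangle\leq\pi_1(\Sigma)$ to obtain a finite cover $p_1:\Sigma_1\to\Sigma$ admitting a simple closed geodesic $\alpha_1\subset\Sigma_1$ that projects homeomorphically onto $\alpha$. If $\alpha_1$ happens to be separating in $\Sigma_1$, one further finite cover (for instance, an abelian cover dual to a curve meeting $\alpha_1$) makes $\alpha_1$ non-separating while preserving the one-to-one lift.

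For the second stage, cut $\Sigma_1$ along $\alpha_1$ to obtain a compact hyperbolic surface $S$ with geodesic boundary, having two boundary circles $\alpha_+,\alpha_-$ both isometric to $\alpha_1$ via a natural identification $\iota$ that recovers $\Sigma_1$. Since $\partial S\neq\emptyset$, the group $\pi_1(S)$ is free. The plan is then to construct a finite covering $q:\tilde S\to S$ realizing three simultaneous properties: \emph{(a)} each $\alpha_\pm$ has a distinguished one-to-one lift $\tilde\alpha_\pm$, and $\iota$ lifts to an isometry $\tilde\iota:\tilde\alpha_+\to\tilde\alpha_-$; \emph{(b)} every other component of $q^{-1}(\alpha_\pm)$ covers its image with degree strictly greater than $K/\ell(\alpha)$, hence has length $>K$; \emph{(c)} every closed geodesic of $\tilde S$ disjoint from $q^{-1}(\alpha_+\cup\alpha_-)$ has length $>K$. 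Gluing $\tilde\alpha_+$ to $\tilde\alpha_-$ via $\tilde\iota$ and pairing off the remaining components of $q^{-1}(\alpha_+)$ and $q^{-1}(\alpha_-)$ then yields a closed hyperbolic surface $\hat\Sigma$ that finitely covers $\Sigma_1$, and hence $\Sigma$. The image $\hat\alpha$ of $\tilde\alpha_\pm$ is non-separating by construction and projects homeomorphically onto $\alpha$, while properties \emph{(b)} and \emph{(c)}---combined with a collar argument of the type used in Lemma~\ref{l.crosscollars}---imply that every other simple closed geodesic of $\hat\Sigma$ has length greater than $K$.

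The main obstacle is the construction of $\tilde S$ satisfying \emph{(a)}--\emph{(c)} simultaneously. A direct application of LERF in $\pi_1(\Sigma_1)$ cannot produce $\hat\Sigma$: as emphasized in the introduction, a conjugate of a short simple closed geodesic may lie in $\langle\alpha_1\rangle$, and then no finite quotient of $\pi_1(\Sigma_1)$ keeping $\alpha_1$ alive can kill this conjugate. The virtue of cutting is that $\pi_1(S)$ is free; there, Marshall Hall's theorem---asserting that every finitely generated subgroup of a free group is a free factor of some finite-index overgroup---applied to the subgroup $\langle b_+,b_-\rangle$ generated by the two boundary loops, delivers the one-to-one lifts of $\alpha_\pm$ required by \emph{(a)}. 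Residual finiteness of free groups handles the short interior geodesics \emph{(c)}, and a normality/symmetry condition on $q$ (so that the preimages of $\alpha_+$ and $\alpha_-$ decompose in isomorphic ways) ensures both that the remaining components can be paired off consistently and that the regluing actually yields an honest covering $\hat\Sigma\to\Sigma_1$. The quantitative interplay between these three requirements is where the technical work concentrates, and it is the content deferred to Appendix~\ref{s.appendix}.
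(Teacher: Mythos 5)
Your route differs genuinely from the paper's. The appendix keeps the closed surface $\Sigma$ intact throughout, fixes a metric \emph{adapted} to $\alpha$ (so $\alpha$ is the unique systole with a wide collar), shows via Lemma~\ref{lem:BetaCool} that any curve realizing $\ell_2$ is simple and meets $\alpha$ at most once, and then increases $\ell_2$ one notch at a time (Theorem~\ref{teo:bis}, Lemma~\ref{lem:2curvas}) by cutting along auxiliary curves dual to $\alpha$ and $\beta$ and regluing copies according to explicit permutation data. You instead cut $\Sigma$ along $\alpha$ \emph{first}, work in the free group $\pi_1(S)$, and reglue. That is a legitimate alternative strategy (close to Wilton's original MathOverflow sketch), but as written it is a plan, not a proof: you explicitly ``defer the quantitative interplay between (a), (b), (c) to Appendix~\ref{s.appendix}'', which is precisely the statement you are being asked to prove. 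The step that carries the weight is exactly the one you skip.

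Two gaps would need to be filled. First, Marshall Hall's theorem and residual finiteness of $\pi_1(S)$ produce \emph{different} finite-index subgroups, and passing to a common subcover does not automatically preserve either the free-factor structure (hence the $(1\!:\!1)$ lifts of $b_\pm$) or the ``normality/symmetry'' you need to pair off the remaining boundary components of $q^{-1}(\alpha_+)$ with those of $q^{-1}(\alpha_-)$; you assert this pairing condition but never say how to arrange it, and it is not automatic. Second, and more fundamentally, property~(c) only controls geodesics of $\tilde S$ that stay away from $q^{-1}(\alpha_\pm)$, whereas in $\hat\Sigma$ the short curves one must worry about are exactly those built from arcs of $\tilde S$ joining boundary circles, which become closed only after regluing. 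A ``collar argument as in Lemma~\ref{l.crosscollars}'' requires the glued curve $\hat\alpha$ (which has length $\ell(\alpha)$, not $>K$) to have a wide half-collar on both sides, and nothing in (a)--(c) gives that. This is exactly why the paper assumes a metric adapted to $\alpha$ from the start and controls the geometric intersection $i(\alpha,\beta)\le 1$ before doing any cutting. (A minor point: the LERF appeal in your first stage is spurious, since $\alpha$ already has a $(1\!:\!1)$ lift to $\Sigma$ itself; only a degree-two cover to make it non-separating is needed.)
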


We say that the \emph{second systole} of $\hat \Sigma$ is large because the only short closed curves (i.e. shorter than $K$) in $\hat \Sigma$ need to be homotopic to a power of $\hat \alpha$. 

\begin{remark}
\label{rem.large_collars_tubes}
By Lemma \ref{l.halfcollars} the surface $\hat \Sigma$ has half-collars around $\hat \alpha$ of width $\geq \frac{K-l_\alpha}{2}$ on both sides. If $K\geq2 l_\alpha$ then the widths of these half-collars are  $\geq K/4$.
\end{remark}

\begin{remark}
\label{rem_tubes_genus}
The internal systole of the connected surface with boundary $T$ obtained by cutting $\hat \Sigma$ along $\hat \alpha$ is greater than $K$ so there must exist a point of $T$ with injectivity radius $\geq K$. We deduce that the area of $T$ (which equals that of $\hat \Sigma$) is $\geq 2\pi(\cosh K-1).$ By Gauss-Bonnet's theorem, the genus $g$ of $T$ (note that it equals $genus(\hat \Sigma)-1$) satisfies the following inequality
$$g\geq\frac{\cosh K -1}{2}.$$
\end{remark}

\paragraph{ {\bf Admissible decompositions --}} Let $\Sigma$ be a closed hyperbolic surface. An \emph{admissible decomposition} of $\Sigma$ is a pair $(X,S)$ of (possibly disconnected) compact hyperbolic subsurfaces of $\Sigma$ with geodesic boundary such that
\begin{itemize}
\item $\Sigma=X\cup S$;
\item $X$ and $S$ meet at their common boundary.
\end{itemize}

The surface $X$ will be sometimes called the \emph{admissible complement of $S$}.

\paragraph{ {\bf Relative residual finiteness --}} We now state a relative version of residual finiteness of the fundamental group of a given closed hyperbolic surface $\Sigma$ which is adapted to a given admissible decomposition $(X,S)$. More precisely, we want to find coverings of $\Sigma$ where we keep a copy of $S$ while increasing the internal systole and collar width of its admissible complement $X$. The following theorem in the case where $S$ is empty, can be deduced from the residual finiteness of surface groups. 

\begin{theorem}[Relative residual finiteness]\label{t.finitecovering}
Let $\Sigma$ be a closed hyperbolic surface and $(X,S)$ be an admissible decomposition of $\Sigma$. Then, given $K>0$, there exists a closed hyperbolic surface $\hat \Sigma$ with an admissible decomposition $(\hat X,\hat S)$  as well as a finite covering $p: \hat \Sigma \to \Sigma$ such that
\begin{enumerate}
\item the restriction $p|_{\hat S}:\hat S\to S$ is a $(1:1)$ isometry;
\item the internal systole of $\hat X$ is larger than $K$;
\item the boundary components of $\hat X$ have disjoint half collars of width larger than $K$.
\end{enumerate}
\end{theorem}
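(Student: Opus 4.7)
The plan is to combine Scott's LERF theorem for surface groups with iterated applications of Theorem C, which provides finer control over the second systole than LERF alone.

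First, I will reduce condition (3) to a strengthened form of condition (2). By Lemma \ref{l.halfcollars}, a lower bound of $K' := 2K + \max_i l_{\alpha_i}$ on the internal systole of $\hat X$ already implies that the half-collar width at every boundary component of $\hat X$ exceeds $K$; disjointness of these half-collars follows from a similar argument, since a short geodesic arc joining two distinct boundary components, combined with portions of $\partial \hat X$, would yield a short interior closed geodesic of $\hat X$. It thus suffices to build a cover satisfying (1) together with (2) with $K$ replaced by some $K'' > K'$ sufficiently large.

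To realize condition (1), I appeal to Scott's LERF theorem \cite{Scott}: every finitely generated subgroup of a surface group is closed in the profinite topology. Each $\pi_1(S_j)$ (for $S_j$ a component of $S$) is finitely generated, hence separable; intersecting finitely many finite-index subgroups containing them yields $H_0 \subset \pi_1(\Sigma)$ of finite index with $\pi_1(S_j) \subset H_0$ (after suitable basepoint choices) for every $j$. The corresponding cover $\pi_0 : \Sigma_0 \to \Sigma$ contains a $(1:1)$ lift $\hat S_0$ of $S$, with complement $\hat X_0 := \overline{\Sigma_0 \setminus \hat S_0}$.

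The main step will be to further cover $\Sigma_0$ so as to enlarge the internal systole of the complement above $K''$ while preserving the $(1:1)$ lift of $\hat S_0$. By compactness, only finitely many primitive closed geodesics $\gamma_1, \ldots, \gamma_N$ of $\Sigma_0$ of length $\leq K''$ have free homotopy class not contained in $\hat S_0$; and among the lifts of each $\gamma_i^d$ with $d \cdot l_{\gamma_i} \leq K''$, I must ensure none is closed in the interior of the final $\hat X$. I will iteratively apply Theorem C: for each bad $\gamma_i$, Theorem C furnishes a cover where $\gamma_i$ lifts only with large multiplicity (and thus large length) while a carefully chosen auxiliary curve is preserved, so that $\hat S_0$ continues to lift $(1:1)$ via a further LERF step. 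Taking the common refinement over all bad pairs $(\gamma_i, d)$ produces the desired $\hat\Sigma$.

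The hard part lies in the interplay in this last step between preserving $\hat S_0$, which forces $H \supset \pi_1(\hat S_0)$, and avoiding entire $\pi_1(\Sigma_0)$-conjugacy classes of bad elements, since the lift of $\gamma_i^d$ can close up at any admissible basepoint of the cover. Scott's LERF handles only element-by-element separation; controlling a whole conjugacy class demands careful iteration, and it is precisely here that Theorem C's fine control over the second systole becomes decisive, ensuring that no new short simple closed geodesics are inadvertently created by successive covers.
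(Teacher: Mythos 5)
Your plan diverges from the paper's argument in a meaningful way, and it leaves a genuine gap at the decisive step. The paper never invokes Scott's LERF theorem for this proof. Instead it builds the cover explicitly via the \emph{surgery} construction of Section~\ref{s.toolbox}: first it attaches an $(\alpha_i,L)$-tube at each boundary curve $\alpha_i$ of $S$ (the surgery of the cover from Theorem~C with the identity, cut and regluing only along $\alpha_i$); since this surgery leaves the corresponding component of $S$ untouched, a $(1:1)$ copy of $S$ survives automatically, and the tube furnishes a large half-collar on the $X$-side of $\alpha_i$. Then, for the internal systole, it finds two long simple closed geodesics $\gamma_1,\gamma_2$ \emph{filling} each component of the complement $X_k$ (Proposition~\ref{p.fill_large_curves}), and attaches tubes along them; every short interior geodesic must cross $\gamma_1\cup\gamma_2$, so it opens up in the tube. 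The point is that surgery along a curve disjoint from $S$ can never destroy the $(1:1)$ lift of $S$ and never introduces conjugacy-class bookkeeping.

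By contrast, your proposal identifies precisely where LERF falls short --- it separates $\pi_1(S)$ from finitely many elements but cannot control entire conjugacy classes, which is the mechanism by which new short geodesics can appear in a cover --- and then defers the resolution to ``careful iteration'' of Theorem~C without specifying how to do it. That is exactly the issue the paper's introduction flags as the reason LERF alone is insufficient (e.g.\ a conjugate of a short curve might lie inside the subgroup being preserved). Your text states that Theorem~C ``becomes decisive'' here, but does not explain how to apply it while simultaneously (a) keeping $\hat S$ lifting $(1:1)$ and (b) blowing up every lift of every short conjugacy class not carried by $S$. This is the content of the theorem; without it the proof is incomplete. A secondary, smaller issue: your reduction of condition~(3) to a strengthened~(2) via Lemma~\ref{l.halfcollars} gives half-collar \emph{width} readily (since $\partial\hat X$ has the same component lengths as $\partial X$), but the \emph{disjointness} argument is not quite right as stated --- a short orthogeodesic between two boundary components of $\hat X$ closes up to a curve isotopic to the third boundary of a pair of pants, which could a priori be another boundary component rather than an interior geodesic; the paper avoids this by noting (Remark~\ref{rem_no-pants}) that after attaching high-genus tubes no component of $\hat X$ is a pair of pants, and more fundamentally it attaches tubes at each $\alpha_i$ one at a time, directly producing pairwise disjoint half-collars.
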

Notice that conditions (1) and (2) imply condition (3) with a smaller constant depending on the length of the boundary components of $S$. We state the three conditions because this is the way we will use it.

\begin{remark}[Enough topological room on $\hat X$]\label{r.room} Assume the hypothesis of Theorem \ref{t.finitecovering}, and consider $S_0$ a compact surface with boundary (not necessarily connected and possibly with degenerate components\footnote{This would be isolated simple closed geodesics with large embedded collars. This is needed if one wishes to construct cylinder leaves, but as we mentioned before, we will ignore this to avoid cumbersome notation.}) and $g\in\N$. Then, we can perform the construction so that, in addition to the conclusion of Theorem \ref{t.finitecovering}, $\hat X$ contains a subsurface $X$ with geodesic boundary, written as a disjoint union $X=X_1\sqcup X_2$ such that:
\begin{itemize}
\item $X_1$ is homeomorphic to $S_0$
\item for every boundary component $\alpha_j$ of $\hat X$ there exists a subsurface $A_j\dans X_2$ such that
\begin{enumerate}
\item $A_j$ has genus $g$ and two boundary components;
\item one of the boundary components of $A_j$ is $\alpha_j$;
\item for $i\neq j$ we have $A_i\cap A_j=\vide$.
\end{enumerate}
\end{itemize}
This means that boundary components of $\hat X$ are separated by topology.
\end{remark}

The rest of the section is devoted to the proof of Theorem \ref{t.finitecovering} and Remark \ref{r.room}. For that purpose we will show how to perform surgeries of finite coverings.

\subsection{Surgeries of finite coverings}

We now introduce a technique to construct new finite coverings of a surface from others. We call this technique \emph{surgery} since it consists in `cutting and pasting' different finite coverings (see Figure \ref{fig:surg}).

\begin{figure}[h!]
\centering
\includegraphics[scale=0.5]{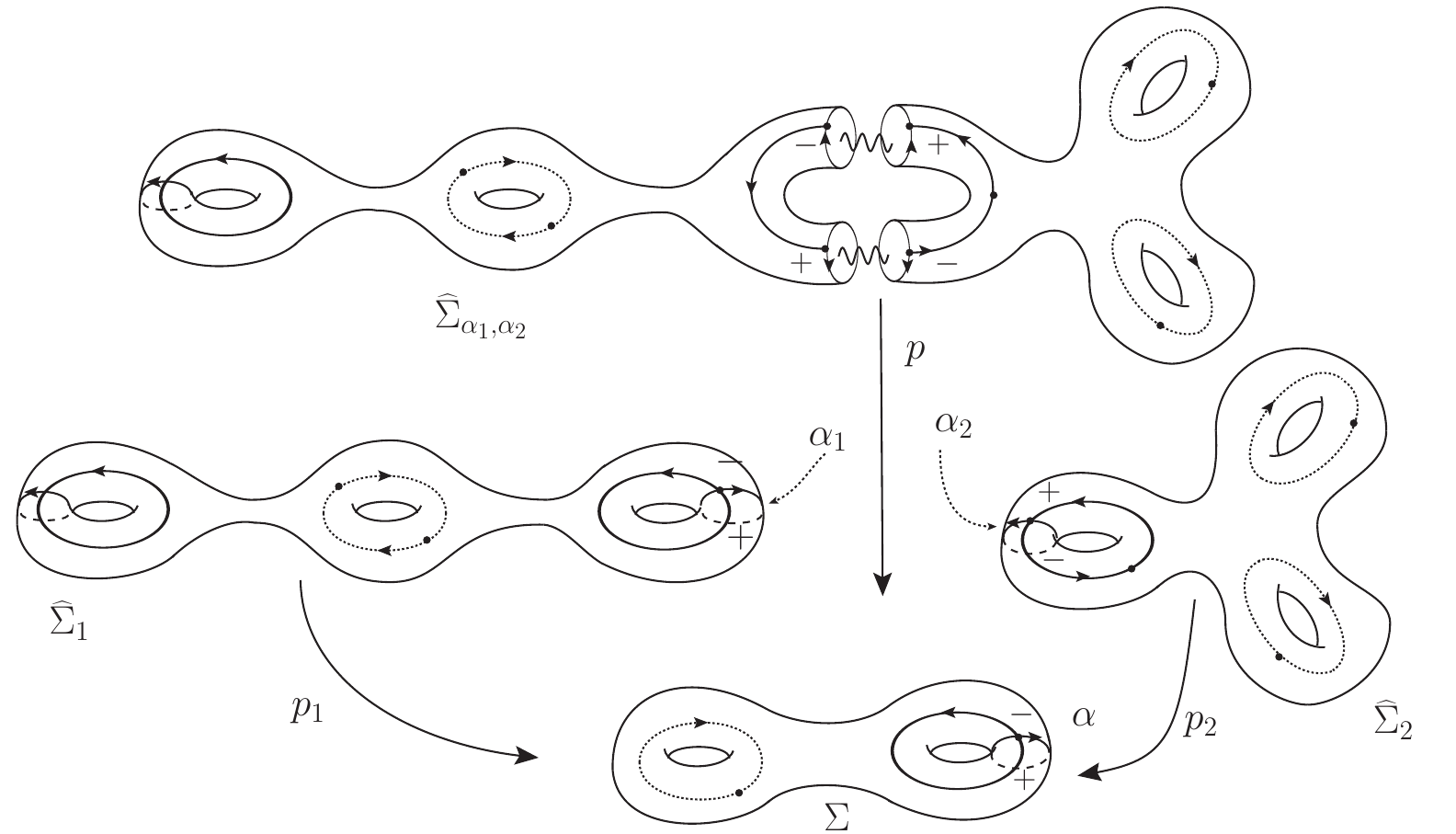}
\caption{The surgery of two double covers.}\label{fig:surg}
\end{figure}

Given a closed hyperbolic surface $\Sigma$ and a simple closed geodesic $\alpha \subset \Sigma$ we denote by $\Sigma_\alpha$ the (not necessarily connected) hyperbolic surface obtained by cutting along $\alpha$. Two new boundary components appear in $\Sigma_\alpha$ associated to $\alpha$ which we denote by $\alpha^+$ and $\alpha^-$ according to the orientation. We will fix a point $x\in\Sigma$ and its two copies $x^+\in\alpha^+$ and $x^-\in\alpha^-$.

Let $p_i: \hat \Sigma_i \to \Sigma$ be coverings of $\Sigma$ ($i=1,2$) such that for some simple closed geodesic $\alpha$ of $\Sigma$ there exists, for every $i$, a closed geodesic $\alpha_i \subset \hat \Sigma_i$ such that $p_i$ is a $(1:1)$ isometry from $\alpha_i$ to $\alpha$.  Consider the two surfaces $\hat \Sigma_{\alpha_i}$, the four copies $\alpha_i^\pm$ of $\alpha$ as well as the four points $x^\pm_i$, which project down to $x$.

We define $\hat \Sigma_{\alpha_1,\alpha_2}$ to be the surface obtained from $\hat \Sigma_{\alpha_1}$ and $\hat \Sigma_{\alpha_2}$ by gluing $\alpha_1^+$ with $\alpha_2^-$ and $\alpha_1^-$ with $\alpha_2^+$.  To completely describe the gluing one must require that it sends respectively $x_1^+$ and $x_1^-$ on $x_2^-$ and $x_2^+$ and that it is an isometry: notice that the four curves are isometric lifts of $\alpha$.

Together with $\hat \Sigma_{\alpha_1,\alpha_2}$ one can define a map $p: \hat \Sigma_{\alpha_1,\alpha_2} \to \Sigma$ which is obtained by applying $p_1$ or $p_2$. We let $\hat \alpha^+$ and $\hat \alpha^-$ be these two distinguished curves, which are both isometric to $\alpha$

\begin{definition}
The map $p: \hat \Sigma_{\alpha_1, \alpha_2} \to \Sigma$ is called the \emph{surgery} of $p_1$ and $p_2$ \emph{along the pair} $(\alpha_1,\alpha_2)$. 
\end{definition}

\begin{proposition}
\label{propo_connected_covering}
The map $p$ is a finite cover of $\Sigma$ and the surface $\hat \Sigma_{\alpha_1, \alpha_2}$ is connected if the $\hat \Sigma_i$ are connected and at least one of the $\alpha_i$ is non-separating. 
\end{proposition}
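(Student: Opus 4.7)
The plan is to verify two separate claims: first, that $p$ is a finite covering map, and second, that the total space is connected under the stated hypotheses.

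For the covering claim, I would argue that $p$ is a local isometry from a closed surface to $\Sigma$. Away from the glued curves $\hat\alpha^\pm$, this is immediate since $p$ restricts to $p_i$ on each $\hat\Sigma_{\alpha_i}$, and the $p_i$ are local isometries. At a point on $\hat\alpha^+$ (respectively $\hat\alpha^-$), a small neighborhood in $\hat\Sigma_{\alpha_1,\alpha_2}$ is the union of a half-collar coming from $\alpha_1^+$ inside $\hat\Sigma_{\alpha_1}$ and a half-collar coming from $\alpha_2^-$ inside $\hat\Sigma_{\alpha_2}$, glued along their geodesic boundaries via the distinguished point identification $x_1^+\sim x_2^-$. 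Since $p_1$ sends the first half-collar isometrically onto one side of a collar of $\alpha$ in $\Sigma$, and $p_2$ sends the second half-collar isometrically onto the other side of the \emph{same} collar, and the gluings are set up so that the basepoints match, the two pieces fit together along $\alpha$ to produce an isometric identification of a full two-sided collar of $\hat\alpha^+$ with a collar of $\alpha$ in $\Sigma$. Hence $p$ is a local isometry at every point. Since $\hat\Sigma_{\alpha_1,\alpha_2}$ is compact (obtained by gluing two compact surfaces along isometric closed geodesics) and $p$ is a local isometry into a closed surface, it is automatically a finite covering map.

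For the connectedness claim, I would use the classical fact that cutting a connected surface along a non-separating simple closed curve yields a connected surface with two boundary components, whereas cutting along a separating curve yields exactly two connected components. Assume without loss of generality that $\alpha_1$ is non-separating, so that $\hat\Sigma_{\alpha_1}$ is connected. If $\alpha_2$ is also non-separating, then $\hat\Sigma_{\alpha_2}$ is connected as well, and gluing two connected surfaces along boundary components obviously yields a connected surface. If on the other hand $\alpha_2$ is separating, write $\hat\Sigma_{\alpha_2} = C^+ \sqcup C^-$ where $C^\pm$ denotes the component containing $\alpha_2^\pm$. The surgery identifies $\alpha_1^- \sim \alpha_2^+ \subset C^+$ and $\alpha_1^+ \sim \alpha_2^- \subset C^-$, so each component of $\hat\Sigma_{\alpha_2}$ is glued to the connected piece $\hat\Sigma_{\alpha_1}$, producing a connected result.

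I do not expect any serious obstacle here: the verification at the glued curves is the only slightly delicate point, and it is handled by the compatibility of the two half-collars, which is precisely what the basepoint convention $x_1^\pm \mapsto x_2^\mp$ and the isometric character of the $p_i|_{\alpha_i}$ guarantee.
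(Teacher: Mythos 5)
Your proposal is correct and follows essentially the same route as the paper: compactness plus local isometry implies finite covering, with the only nontrivial verification being along the glued curves, and then a case analysis on whether $\alpha_2$ separates. The paper makes the ``two half-collars fit together'' step precise by observing that the $p_i$ preserve Fermi coordinates based at $x_i$, which is exactly the formalization of your appeal to the basepoint convention $x_1^\pm\mapsto x_2^\mp$ together with the isometric character of the identifications.
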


\begin{proof}
Suppose $\hat \Sigma_i$ are connected and $\alpha_1$ is non separating. Then $\hat \Sigma_{\alpha_1,\alpha_2}$ is a union of  $\hat \Sigma_{\alpha_1}$, a connected surface, with two (if $\alpha_2$ separates) or one (if it does not) connected surfaces intersecting $\hat \Sigma_{\alpha_1}$. It must be connected.

Moreover the surface $\hat \Sigma_{\alpha_1,\alpha_2}$ is compact so it suffices to prove that $p$ is a local isometry. Since $p_1$ and $p_2$ are local isometries, it is enough to verify this in a neighbourhood of $\hat \alpha^+$ and $\hat \alpha^-$.

Recall that a point $y$ inside a sufficiently thin collar in $\Sigma$ about $\alpha$ is described by its \emph{Fermi coordinates} (based at $x$) $(\rho(y),\theta(y))$ where $\rho(y)$ is the signed distance from $y$ to $\alpha$ (positive on the right side, negative on the left side) and where the orthogonal projection of $y$ onto $\alpha$ is $\alpha(\theta(y))$ (here we choose $\alpha(0)=x$). See \cite{Bu}.

Since they are isometries from a collar of $\alpha_i$ onto a collar onto $\alpha$, the maps $p_i$ preserve Fermi coordinates (based at $x_i$ respectively). So by construction the map $p$ preserves Fermi coordinates in a collar of $\hat \alpha^\pm$. This means that it is an isometry from these open sets onto a collar about $\alpha$, concluding the proof.
\end{proof}

 Finally, note that by construction there are two isometric embeddings $j_1:\hat\Sigma_{\alpha_1}\to\hat\Sigma$ and $j_2:\hat\Sigma_{\alpha_2}\to\hat\Sigma$ such that for $i=1,2$
$$p\circ j_i=p_i$$
when restricted to $\Int(\hat \Sigma_{\alpha_i})$. We will refer to them as the two \emph{$(1:1)$-lifts} of $\hat\Sigma_{\alpha_1}$ and $\hat\Sigma_{\alpha_2}$ to $\hat\Sigma$.

\subsection{ Attaching tubes}

To construct the desired coverings we will use Theorem C several times and perform surgeries from this. To simplify the structure we will give a name to the building blocks of the surgeries provided by Theorem C. 

\begin{definition}
Given $\alpha \subset \Sigma$ a simple closed geodesic and $K>0$ we say that $T$ is an $(\alpha,K)$-tube if it is a surface given by Theorem C for the curve $\alpha$ and the constant $K$.
\end{definition}
Notice that an $(\alpha,K)$-tube is not topologically a simple surface, like the word `tube' might suggest. In fact, since it covers some closed hyperbolic surface, it must be itself a surface of hyperbolic type, and in our applications it will usually have large genus (see Remark \ref{rem_tubes_genus}).

Given an admissible decomposition $(X,S)$ of $\Sigma$ we will ``attach tubes'' to boundary components of $S$ in order to isolate them one from the other.

\paragraph{{\bf Attaching tubes at a closed geodesic --}} Given a surface $\Sigma$ with a simple closed geodesic $\alpha$. Consider $T$, a $(\alpha,K)$-tube, and $\pi:T\to\Sigma$ the covering map defined by Theorem C and $\hat \alpha$, the unique $(1:1)$-lift of $\alpha$.

\begin{figure}[h!]
\centering
\includegraphics[scale=0.4]{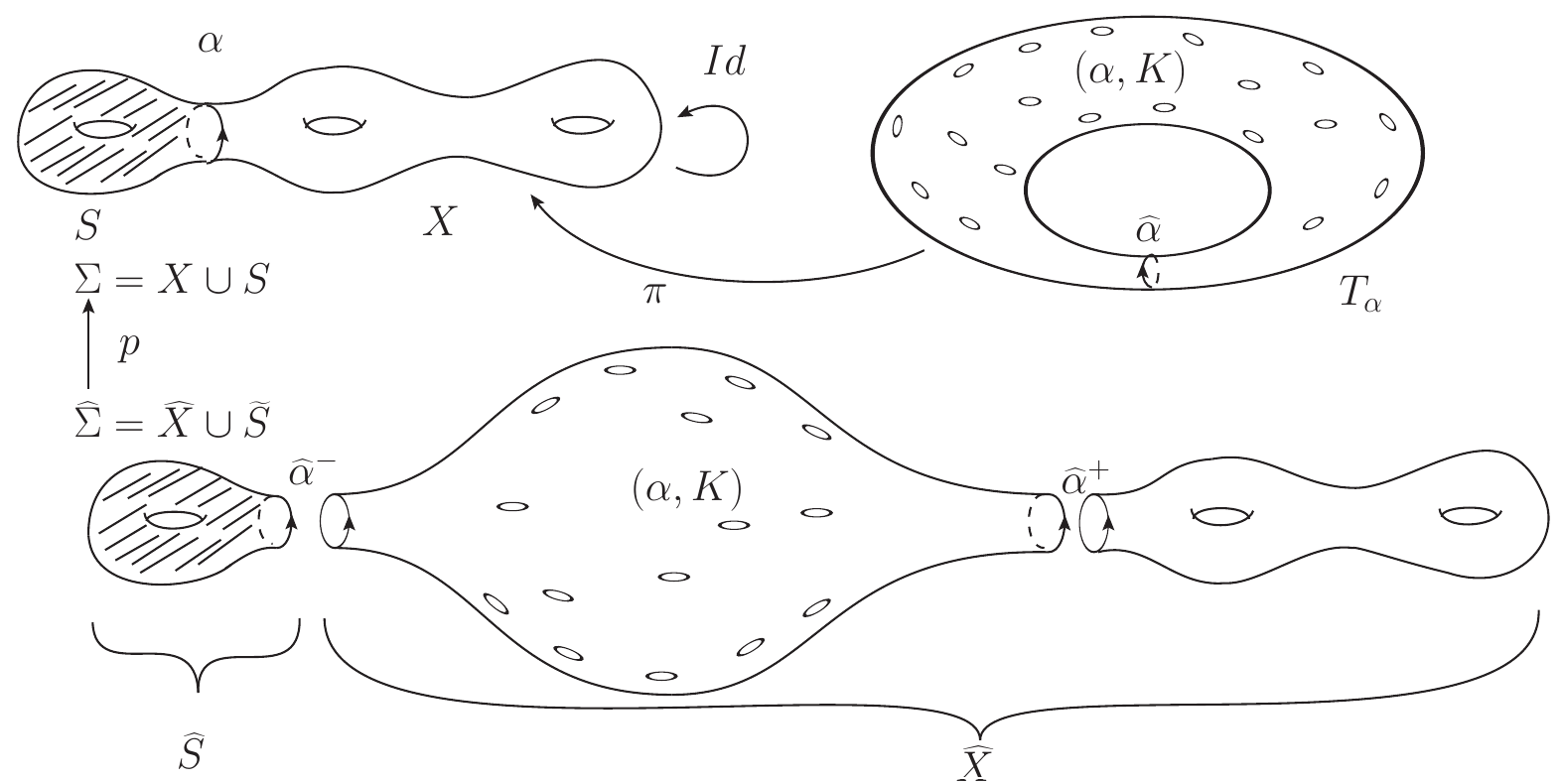}
\caption{Attaching a tube.}\label{fig:tubouille}
\end{figure}

We will say that a covering $p: \hat \Sigma \to \Sigma$ is obtained from $\Sigma$ by \emph{attaching a $(\alpha,K)$-tube} at $\alpha$ if it is the surgery of $\pi$ and the identity $\Id: \Sigma \to \Sigma$ along the pair $(\hat \alpha,\alpha)$. Since the curve $\hat \alpha$ is non-separating, the surface $\hat \Sigma$ is connected (see Proposition \ref{propo_connected_covering}).

In $\hat \Sigma$ there are two distinguished curves $\hat \alpha^+$ and $\hat \alpha^-$ which are the unique $(1:1)$-lifts of $\alpha$ and have at least one half collar of width $\geq \frac{K-l_\alpha}{2}\geq K/4$ (if $K\geq 2l_\alpha$). See Figure \ref{fig:tubouille}.

Let $j_1,j_2$ be the two $(1:1)$-lifts associated to $p:\hat \Sigma\to\Sigma$ and note that by definition $p\circ j_1=\Id$ when restricted to $\Int(\Sigma_\alpha)$.

\begin{lemma}\label{l_short_curve}
Let $\Sigma$ be a closed surface, $\alpha$ be a simple closed geodesic and $K\geq 2l_\alpha$. Let $p:\hat\Sigma\to\Sigma$ be a finite covering obtained from $\Sigma$ by attaching a $(\alpha,K)$-tube. Let $\beta\dans\hat\Sigma$ be a closed geodesic of length $< K/4$. Then $\beta$ is included inside $j_1(\Int(\Sigma_\alpha))$.
\end{lemma}

\begin{proof}
Let $\beta\dans\hat\Sigma$ be a closed geodesic. There are two possibilities

\noindent {\it Case 1. $\beta$ is disjoint from $\hat \alpha^+$ and $\hat\alpha^-$.} In that case either $\beta$ is included inside $j_1(\Int(\Sigma_\alpha))$ or inside $j_2(\Int(T_\alpha))$. In the second case $l_\beta$ is larger that the second systole of the tube, so it is $\geq K$

\noindent {\it Case 2. $\beta$ crosses $\hat \alpha^+$ or $\hat \alpha^-$.} In that case its length must be larger than the width of a half-collar based at $\hat \alpha^+$ or at $\hat \alpha^-$, which is $\geq K/4$.

This proves that if furthermore $l_\beta< K/4$ then $\beta\dans j_1(\Int(\Sigma_\alpha))$.
\end{proof}

\subsection{Proof of Theorem \ref{t.finitecovering}.}

The proof of Theorem \ref{t.finitecovering} consists in starting with $\Sigma$ and attaching several $(\alpha,K)$-tubes. The construction has several stages. Consider a closed hyperbolic surface $\Sigma$ with admissible decomposition $(X,S)$. Let $\alpha_1,\ldots,\alpha_k$ denote the boundary components of $S\dans\Sigma$. Let $K>0$ and $L>4K$ satisfying moreover $L\geq 2l_{\alpha_i}$ for all $i$.

\begin{remark}\label{r.collars_admissible}
Let $1\leq i\leq k$. If $\alpha_i$ is a boundary component of $S$ then it has one half-collar included inside $S$ and another one included in $X$.

\end{remark}

\paragraph{ {\bf Isolating components of $S$ --}}

Denote $\Sigma_0= \Sigma$ and consider a finite cover $p_1: \Sigma_1 \to \Sigma_0$ obtained by attaching an $(\alpha_1,L)$-tube along $\alpha_1$.

There is exactly one $(1:1)$-lift of $\alpha_j$ for $j\geq 2$, and exactly two $(1:1)$-lifts of $\alpha_1$. Moreover only one of these two lifts has a half-collar that projects down into $S$ (see Remark \ref{r.collars_admissible}). This lift bounds a $(1:1)$-copy of the corresponding connected component of $S$ and has one half collar of width more than $L/4\geq K$. Let $\hat \alpha_1$ denote the lift of $\alpha_1$ that we distinguished.

As a consequence, the surface $\Sigma_1$ possesses an admissible decomposition $(X_1,S_1)$ $p_1|_{S_1}:S_1\to S$ is an isometry and where $\hat \alpha_1$ has one half-collar of width $\geq K$ included in $X_1$.

We can continue this process and construct $p_j : \Sigma_j \to \Sigma_{j-1}$ for $j = 2, \ldots, k$ by attaching $(\alpha_j,L)$-tubes along $\alpha_j \subset \Sigma_{j-1}$. This produces a finite cover $\hat p_k: \Sigma_k \to \Sigma$ where $\Sigma_k$ has an admissible decomposition $\Sigma_k=X_k\cup S_k$ such that 

\begin{itemize}
\item each component of $S$ has a $(1:1)$-lift to $S_k$ by $\hat p_k$;
\item boundary components of $X_k$ have disjoint half-collars of width $\geq K$.
\end{itemize}

\begin{remark}
\label{rem_no-pants}
Tubes have large genus by Remark \ref{rem_tubes_genus} so we can assume that $X_k$ is a finite union of compact connected surfaces with boundary, none of which is a pair of pants. 
\end{remark}


\paragraph{ {\bf Enlarging the internal systole of $X$ --}}  To complete the proof we need to take a finite cover that lifts $S_k$ while enlarging the internal systole of $X_k$, the admissible complement of $S_k$. This will be done by attaching tubes to large simple closed geodesics intersecting those curves of $X_k$ that have length $<K$.

\begin{proposition}\label{p.fill_large_curves}
Let $Y$ be a connected compact hyperbolic surface with geodesic boundary which is not a pair of pants and let $\beta$ be a closed geodesic inside the interior of $Y$. For every $L>0$ there exists a simple closed geodesic $\gamma$ such that $\beta\cap\gamma\neq\vide$ and $l_{\gamma}\geq L$ for $i=1,2$.
\end{proposition}

\begin{proof}
The set $Y$ is not a pair of pants so there exists a filling pair $(\gamma_1,\gamma_2)$ of simple closed geodesics, meaning that $Y\moins(\gamma_1\cup\gamma_2)$ is a union of disks and annuli isotopic to the boundary of $Y$ (see \cite[Proposition 3.5.]{Farb_Marg}). In particular every closed geodesic inside the interior of $Y$ meets the union $\gamma_1\cup\gamma_2$ and $i(\gamma_1,\gamma_2)>0$.

Let $\gamma_1'$ be the simple closed geodesic obtained from $\gamma_1$ after performing a large enough number of Dehn twists about $\gamma_2$ so that $l_{\gamma_1'}\geq L$. We have $i(\gamma_1',\gamma_2)>0$ so we can also obtain from $\gamma_2$ a simple closed geodesic $\gamma_2'$ after iterating a large enough number of Dehn twists about $\gamma_1'$ so that $l_{\gamma_2'}\geq L$. By construction, the pair of curves $(\gamma_1',\gamma_2')$ remains filling. In particular every closed geodesic $\beta$  inside the interior of $Y$ meets $\gamma_1'$ or $\gamma_2'$. Since each of these curves has length $\geq L$ this ends the proof of the lemma.
\end{proof}

\begin{proof}[End of the proof of Theorem \ref{t.finitecovering}.]

Since $X_k$ is a compact hyperbolic manifold its length spectrum (c.f. Appendix \ref{s.appendix}) is discrete and there are finitely many closed geodesics (not necessarily simple) $\beta_1,\ldots,\beta_l\dans X_k$ with length $< K$ and that are inside the interior of $X_k$.

Consider first the curve $\beta_1$. The connected component of $X_k$ containing $\beta_1$ is not a pair of pants by Remark \ref{rem_no-pants}. Hence we can apply Proposition \ref{p.fill_large_curves} and find a simple closed curve $\gamma$ with length greater than $L=4K$. Let us rename $\hat \Sigma_0=\Sigma_k$, $\hat S_0=S_k$ and $\hat X_0= X_k$. Consider the covering map $\hat p_1: \hat \Sigma_1 \to \hat \Sigma_0$ defined by attaching a $(\gamma, L)$-tube along $\gamma$.  Each connected component of $\hat S_0$ has a unique $(1:1)$-lift to $\hat \Sigma_1$, this defines an admissible decomposition $(\hat X_1,\hat S_1)$ of $\hat \Sigma_1$. The surface $\hat X_1$ has half-collars of width $\geq K$, and its boundary components are separated by surfaces of high genus. 

Moreover, by Lemma \ref{l_short_curve}, the only closed geodesics of $\hat \Sigma_1$ of length $< K$ are included inside $j_1(\Int([\hat \Sigma_0]_\gamma))$. In particular the only closed geodesics inside $\Int(\hat X_1)$ with length $< K$ are the lifts $j_1(\beta_i)$ of the curves $\beta_i\dans\hat X_0$ satisfying $\beta_i\cap\gamma=\vide$. So after this step, the number of closed geodesics of length $<K$ inside the admissible complement is strictly smaller. Reasoning inductively we obtain the desired cover $\hat p:\hat \Sigma\to\Sigma$ where the admissible decomposition $\hat \Sigma=\hat X\cup\hat \Sigma$ satisfies the three Items of Theorem \ref{t.finitecovering}.

Note that since tubes have large genus (see Remark \ref{rem_tubes_genus}) we can, up to attaching tubes at large curves inside $\hat X$, ensure that the conclusion of Remark \ref{r.room} holds: there is enough topological room inside $\hat X$.
\end{proof}

\section{Forests of surfaces and towers of finite coverings}\label{s.admissible}
\subsection{Organizing surfaces in forests}\label{ss.surfaces-forests}

We  now use the combinatorial description of surfaces and the concept of open direct limit to organize a family of open surfaces. This seemingly complicated way to organize the surfaces gives us more flexibility to control the topology of the leaves of a lamination constructed as a tower of coverings (see Remark \ref{r.forestjustif}). 

\paragraph{ {\bf Forests --}} A forest will be defined as a countable union of disjoint rooted trees. Let us  be more precise and state some notations.

Let $G=(V,E)$ be an oriented graph where $V$ is the set of vertices of $G$ and $E\subset V^2$ is the set of edges. We define the \emph{origin} and \emph{terminal} functions $o:E\to V$ and $t:E\to V$ so that $e=(o(e),t(e))$ for every $e\in E$. 

\begin{definition}\label{d.forest} A \emph{forest} is an oriented graph $\mathcal{T}=(V(\mathcal{T}),E(\mathcal{T}))$ where the set $V(\cT)$ of vertices and the set $E(\cT)\dans V(\cT)^2$ of oriented edges satisfy

 \begin{itemize}
 \item The set of vertices $V(\mathcal{T})$ has a countable partition $V(\mathcal{T})=\bigsqcup_{n\in\N}V_n(\mathcal{T})$ were the $V_n(\mathcal{T})$ are finite sets. We call $V_n(\mathcal{T})$ the $n$-th floor of $\mathcal{T}$. 
 \item $E(\mathcal{T})$ is contained in $\bigcup_{n\in\N} (V_n(\mathcal{T})\times V_{n+1}(\mathcal{T}))$. In other words, given any edge, its terminal vertex is one floor above its origin vertex.
\item Every vertex is the terminal vertex of at most one edge. This implies that $\mathcal{T}$ has no cycles.
\item Every vertex is the origin vertex of at least one edge.
\end{itemize}

\end{definition}

We will write $E(\mathcal{T})=\bigsqcup_{n\in\N}E_n(\mathcal{T})$ where $E_n(\mathcal{T})=\{e\in E(\mathcal{T}):o(e)\in V_n(\mathcal{T})\}$. 

A \emph{root} of $\mathcal{T}$ is a vertex $v\in V(\mathcal{T})$ that is not the terminal vertex of any edge: a root can be located at an arbitrary level. We note $R(\mathcal{T})$ the set of roots of $\mathcal{T}$ and $R_k(\mathcal{T})$ the set of roots of $\mathcal{T}$ that belong to $V_k(\mathcal{T})$. Notice that $\mathcal{T}=\bigsqcup_{v\in R(\mathcal{T})} \mathcal{T}_{v}$ where $\mathcal{T}_{v}$ is the maximal connected subtree of $\mathcal{T}$ containing the root $v$. 

On the other hand, we define the \emph{ends} of $\mathcal{T}$ as the union of the ends of its sub-trees, that is $$\cE(\mathcal{T})=\bigsqcup_{v\in R(\mathcal{T})} \cE(\mathcal{T}_{v})$$
A \emph{ray} of $\mathcal{T}$ is a concatenation of edges $r=(e_n)$ starting at a root. We will index those edges according to the floor to which they belong, that is: if a ray $r$ starts at a root $v\in R_k(\mathcal{T})$, we will denote its edges as $e_k e_{k+1}\ldots$ Also, we will consider $r_{\alpha}$ as a graph morphism $r_{\alpha}:[k,+\infty)\to\mathcal{T}$ where $[k,+\infty)$ is the half-line with one vertex for each integer greater or equal than $k$ and $r_{\alpha}([n,n+1])\in E_n(\mathcal{T})$.

Notice that the set of rays is in correspondence with $\cE(\mathcal{T})$, we will note $r_{\alpha}$ the ray converging to $\alpha$. 
%

\paragraph{ {\bf Forests of surfaces --}} A \emph{forest of surfaces} is a triple 
$$\mathcal{S}=(\mathcal{T},\{S_v\}_{v\in V(\mathcal{T})},\{j_e\}_{e\in E(\mathcal{T})})$$ 
where $\mathcal{T}$ is a forest , $\{S_v\}_{v\in V(\mathcal{T})}$ is a family of pointed compact and connected surfaces with boundary and $\{j_e\}_{e\in E(\mathcal{T})}$ is a family of good inclusions $j_e:S_{o(e)}\to S_{t(e)}$. When necessary, we will note the pointed surface $(S_v,q_v)$, however we will omit the pointing whenever it is possible. 

We associate to $\mathcal{S}$ a  family of pointed surfaces $\{\Ss^{\alpha}\}_{\alpha\in\cE(\mathcal{T})}$  that is called the set of \emph{limit surfaces} of $\mathcal{S}$ and is defined as follows. For an end $\alpha\in\cE(\mathcal{T})$ with the corresponding ray $r_{\alpha}=(e_n)_{n\geq k}$ ($k$ being the floor of the corresponding root) and the chain of inclusions $\{j_{e_{n}}:S_{o(e_n)}\to S_{t(e_n)}\}_{n\geq k}$ associated to it. We define $\Ss^\alpha$ as the open direct limit of this chain.

As mentioned in \S \ref{s.illustration}, we need to be very careful in our construction of towers of coverings if we want to control the topology of leaves in the inverse limit. The next definition gives the correct way to organize the towers.

\subsection{Admissible towers and forests}

\paragraph{ {\bf Forests of surfaces included in towers --}} A forest of surfaces $$\mathcal{S}=(\mathcal{T},\{Z_v\}_{v\in V(\mathcal{T})},\{i_e\}_{e\in E(\mathcal{T})})$$ is said to be included in a tower $\mathbb{T}=\left\{p_n:\Sigma_{n+1}\to\Sigma_n\right\}$ (as in Figure \ref{fig:tree_tower}) if there exist 
\begin{itemize}
\item subsurfaces with geodesic boundary $S_n=\bigsqcup_{v\in V_n(\cT)} S_v$ included in $\Sigma_n$;
\item a family of homeomorphisms $\{h_v:Z_v\to S_v:v\in V(\mathcal{T})\}$;  and
\item a family of embeddings $\{j_e:S_{o(e)}\to S_{t(e)}:e\in E(\mathcal{T})\}$; 
\end{itemize} such that
\begin{itemize}
\item $p_n\circ i_e=\Id$ for every $e\in E_n(\mathcal{T})$;
\item $j_e\circ h_{o(e)}=h_{t(e)}\circ i_e$ for every $e\in E(\mathcal{T})$.
\end{itemize}

For every $n\in\N$ we define the subsurface $S_n^\ast\dans S_n$ by
$$S_n^{\ast}=\bigcup_{e\in E_{n-1}(\mathcal{T})}i_e(S_{o(e)}).$$ 
The (not necessarily connected) surface $S_n$ consists precisely of those surfaces that we want to stabilize (as was illustrated in \S \ref{s.illustration}) whereas the surface $S_n^\ast$ is the isometric lift to the level $n$ of the surfaces we constructed at the level $n-1$.

We will let $X_n$ denote the admissible complement of $S_n$ and $X_n^{\ast}$ the admissible complement of $S_n^{\ast}$. 

\begin{figure}[h!]
\centering
\includegraphics[scale=0.45]{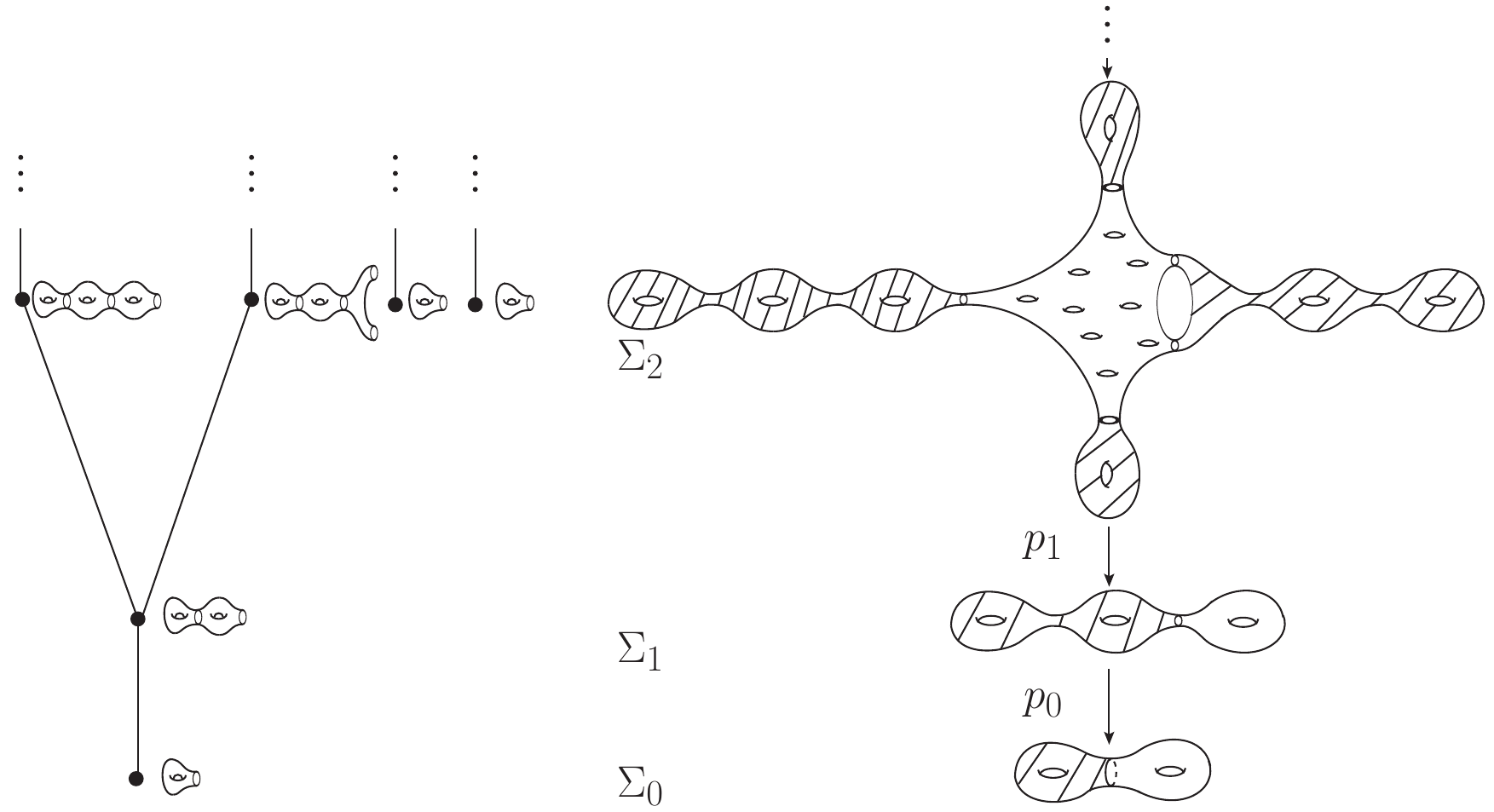}
\caption{Including a forest of surfaces inside a tower of coverings.}\label{fig:tree_tower}
\end{figure}

\paragraph{{\bf Admissible towers --}}  The definition of admissible tower provides the geometric formalization of the intuition explained in \S \ref{s.illustration}.
\begin{definition}[Admissible towers with respect to a forest of surfaces]
\label{def:admissible}
Following the previous notation, we say that \emph{the tower of finite coverings $\mathbb{T}$ is admissible with respect to the  forest of surfaces $\mathcal{S}$} if  $\cS$ is included in $\T$ and if furthermore
\begin{enumerate}
\item the internal systole $\sigma_n$ of $X_n^{\ast}$ tends to infinity with $n$;
\item the boundary of $X_n$ has a half-collar of width $K_n\to\infty$.
\end{enumerate}
When a tower $\mathbb{T}$ is admissible with respect to some surface forest, we call it \emph{an admissible tower}. 
\end{definition}
The following result encapsules the main abstract criteria to control the topology of the leaves of a lamination made by a tower of finite coverings. Let $\mathcal{L}/_{\sim}$ denote the set of leaves of $\mathcal{L}$. 

\begin{theorem}[Topology of the leaves] \label{teo-mainabstract}
Consider a forest of surfaces $$\mathcal{S}=(\mathcal{T},\{S_v\}_{v\in V(\mathcal{T})},\{j_e\}_{e\in E(\mathcal{T})})$$ and $\mathbb{T}=\{p_n:\Sigma_{n+1}\to\Sigma_n\}$ an admissible tower with respect to $\mathcal{S}$ with inverse limit $\mathcal{L}$. Then, the generic leaf of $\mathcal{L}$ is a disk and there exists an injective map $\cE(\mathcal{T})\hookrightarrow \mathcal{L}/_{\sim}$ such that
\begin{itemize}
\item the leaf corresponding to an end $\alpha\in\cE(\mathcal{T})$ is homeomorphic to $\Ss^\alpha$, the open direct limit defined in \ref{ss.surfaces-forests};
\item every leaf which is not included in the image of this map is a disk.
\end{itemize}

\end{theorem}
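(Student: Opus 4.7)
I organize the proof around a dichotomy for each $\mathbf{x}=(x_n)\in\cL$: either $x_n\in S_n^\ast$ for all sufficiently large $n$, or not. The identities $p_n\circ i_e=\Id$, the disjointness of $\{S_v:v\in V_n(\cT)\}$, and the uniqueness of incoming edges in $\cT$ show that in the first case the pull-backs of $\mathbf{x}$ assemble, starting at some level $k$, into a unique ray $r_\alpha=(e_n)_{n\ge k}$ of $\cT$ with $x_n=i_{e_{n-1}}(x_{n-1})\in i_{e_{n-1}}(S_{o(e_{n-1})})$; in the second case, the monotonicity of Remark~\ref{r.distanciamonotona} together with the two admissibility conditions imply that $\dist_{\Sigma_n}(x_n,S_n^\ast)\to\infty$.

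\textbf{Disk leaves.} If $\dist_{\Sigma_n}(x_n,S_n^\ast)\to\infty$, then for any fixed $R$ the ball $B_{\Sigma_n}(x_n,R)$ eventually lies well inside the connected component of $X_n^\ast$ containing $x_n$, at distance at least the admissible half-collar width from the boundary. Lemma~\ref{l.inj_radius}, applied with the escalating internal systole $\sigma_n$ and the large half-collars coming from $K_n\to\infty$, yields $\ri(x_n)\to\infty$. Proposition~\ref{p.Cheeger_Gromov} then exhibits $L_\mathbf{x}$ as an increasing union of embedded hyperbolic disks, hence itself a disk.

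\textbf{Surface leaves.} In the ray-tracking case, let $\Pi_n:L_\mathbf{x}\to\Sigma_n$ be the covering from Proposition~\ref{p.Cheeger_Gromov} and $\tilde S_n\subset L_\mathbf{x}$ the connected component of $\Pi_n^{-1}(S_{v_n})$ containing the preferred lift of $x_n$. Admissibility localises every short closed geodesic of $\Sigma_n$ inside $S_n^\ast$; combined with the short-curve stabilization $G_\mathbf{x}\cap D_R=G_n\cap D_R$ from the proof of Proposition~\ref{p.Cheeger_Gromov}, this implies that $\Pi_n|_{\tilde S_n}$ is an isometry onto $S_{v_n}$. The relations $\Pi_n=p_n\circ\Pi_{n+1}$ and $p_n\circ i_{e_n}=\Id$ yield $\tilde S_n\subset\tilde S_{n+1}$, and Theorem~\ref{t.universal_property} produces an isometric embedding into $L_\mathbf{x}$ of the geometric direct limit of $\{j_{e_n}:Z_{v_n}\to Z_{v_{n+1}}\}$, whose interior is $\Ss^\alpha$. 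Proposition~\ref{p.recognize_surface} then upgrades this to a homeomorphism: each connected component of $L_\mathbf{x}\setminus\bigcup_n\tilde S_n$ is eventually contained in $\Pi_n^{-1}(X_n^\ast)$, hence contains no closed geodesic (by the diverging internal systole) and has connected boundary (a property that propagates from the $S_{v_n}$ via Lemmas~\ref{l.systole-gluing} and~\ref{l.crosscollars}).

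\textbf{Injectivity and genericity.} Send $\alpha\in\cE(\cT)$ to the leaf class of $\mathbf{x}^\alpha=(h_{v_n}(q_{v_n}))_{n\ge k}$, extended below the root via the $p_n$. If $\alpha\neq\beta$ diverge at level $n_0$, then for $n>n_0$ the points $x_n^\alpha$ and $x_n^\beta$ sit in distinct components of $S_n$; any path between them must cross two disjoint half-collars of $\partial X_n$ inside $X_n$, so $d(x_n^\alpha,x_n^\beta)\ge 2K_n\to\infty$ and the leaves are distinct. Genericity follows from the fact that $\{\mathbf x:\dist(x_n,S_n^\ast)\to\infty\}$ is a countable intersection of open dense sets, denseness coming from the shrinking relative area of $S_n^\ast$ in $\Sigma_n$ along the tower. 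I expect the main obstacle to be the application of Proposition~\ref{p.recognize_surface} in the surface-leaf case: carefully ruling out the birth of closed geodesics and disconnected boundary components in the complement $L_\mathbf{x}\setminus\bigcup_n\tilde S_n$ along the exhaustion, using the collar geometry from Section~\ref{s.hyp_geom}.
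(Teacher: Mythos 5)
Your overall strategy (dichotomy into ``marked'' and ``unmarked'' points, Cheeger--Gromov for disks, embedding the geometric direct limit and finishing with Proposition~\ref{p.recognize_surface}) parallels the paper, but the dichotomy on which everything rests is incorrectly formulated, and this is a real gap rather than a stylistic difference.

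You claim that for $\x\in\cL$, either $x_n\in S_n^\ast$ for all large $n$, or $\dist_{\Sigma_n}(x_n,S_n^\ast)\to\infty$, and you invoke Remark~\ref{r.distanciamonotona} plus admissibility for the second implication. This fails. Remark~\ref{r.distanciamonotona} compares two points of $\cL$, not a point and a varying subsurface. More concretely, take an end $\alpha\in\cE(\cT)$ for which $L_{\x^\alpha}\setminus\phi(\overline{\Ss}^\alpha)$ has a funnel or half-plane component (which is the generic situation, precisely because the direct limit need not be complete). A point $p$ in that component lies at finite distance $D$ from $\x^\alpha$ inside the leaf, so the point $\y=(\Pi_n(p))_n\in\cL$ satisfies $\dist(y_n,x_n^\alpha)\leq D$ for all $n$ by the monotonicity remark. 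Since $x_n^\alpha=q_{v_n}\in S_n^\ast$ for large $n$, we get $\dist(y_n,S_n^\ast)\leq D$, bounded. Yet for $n$ large, $y_n\notin S_{v_n}$ (because $p$ is outside the embedded copies) and is far from every other $S_v$, hence $y_n\notin S_n^\ast$. So $\y$ falls into neither of your two cases, and worse, your scheme would classify $\y$ as giving a disk leaf while $\y\in L_{\x^\alpha}$. The paper sidesteps this by formulating the dichotomy (Lemma~\ref{l.dicotomia}) in terms of distance to the \emph{finite} sets of base points $Q^k_n$, and proves it by a compactness/diagonal argument over those finite sets; being far from the base points is what actually forces the injectivity radius to diverge, not being far from the surfaces $S_n^\ast$ (which can have unbounded diameter).

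A second gap, related to the first, is that you do not address the appearance of new roots of $\cT$ at arbitrarily high floors, a difficulty the paper explicitly flags. This is what forces the introduction of the level-$i$ decomposition $S_{v,i}$, the surfaces $X_{n,k}$ and Proposition~\ref{p:systole_internal}, which are indispensable in the disk-leaf step: once a geodesic loop $\gamma_n$ based at $x_n$ is known to avoid $Q_n^k$, one must still distinguish whether $\gamma_n$ is isotopic to a boundary component of $X_{n,k_n}$ (handled with the $\cosh$-collar estimate, using that such a boundary bounds a uniformly bounded-level, hence bounded-diameter, piece) or not (handled by the growing internal systole). Your appeal to Lemma~\ref{l.inj_radius} alone glosses over the boundary-isotopic case. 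Finally, the genericity argument via ``shrinking relative area of $S_n^\ast$ in $\Sigma_n$'' is unsubstantiated and is not how the paper proceeds: the paper instead shows directly that the sets $U_k=\{\x: \ri(x_{m_0})>k\ \text{for some}\ m_0\}$ are open (lower semicontinuity of the injectivity radius) and dense (Lemma~\ref{l.large_inj_radius} plus minimality), without any area estimate.
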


The rest of the section is devoted to the proof of Theorem \ref{teo-mainabstract}. We first give some geometric properties of admissible towers and deduce that the generic leaf of the solenoid defined by an admissible tower is a disk.

\begin{remark}\label{r.cyl}
Once again we point out that it is possible to adapt this formalism to include cylinder leaves which are not taken into account in the way we have presented forests of surfaces. To do this one should allow degenerate surfaces which involves no new difficulty but makes the presentation more dense. In Section \ref{s.illustration} we already illustrated how cylinders can be embedded and we leave to the reader the adaptations needed to include them in the formalism presented here. 
\end{remark}

\begin{remark}\label{r.forestjustif}
The main difference between trees and forests is that the set of ends of a tree is \emph{compact} and the set of ends of a forest may be \emph{non compact}. For example assume that we want to find a lamination such that all leaves are disks except countably many leaves with prescribed topology (cf Theorem \ref{t.tres}) by including a \emph{tree} of surfaces inside a tower. After Theorem \ref{teo-mainabstract} countably many ends $\alpha_n$ of the tree would provide leaves homeomorphic to the desired surfaces. But this sequence must accumulate to other ends of the tree: undesired surfaces can appear as leaves of the lamination. In order to prove Theorem \ref{t.tres} with our method we will need to use a countable union of trees.
\end{remark}

\subsection{Injectivity radius, decompositions and systoles}\label{ss.decomposition}

Consider a tower of finite coverings $\mathbb{T}=\{p_n:\Sigma_{n+1}\to\Sigma_n\}$ admissible with respect to the forest of surfaces $$\mathcal{S}=(\mathcal{T},\{S_v\}_{v\in V(\mathcal{T})},\{j_e\}_{e\in E(\mathcal{T})}).$$




\paragraph{ {\bf Large injectivity radius --}} We first use the definition of admissibility to prove that there exist points of $X_n$ with arbitrarily large injectivity radii when $n\to\infty$.
This point will yield, in the limit, the leaves which are simply connected.

\begin{lemma}\label{l.large_inj_radius}
There exists $n_0>0$ such that for every $n\geq n_0$ and $x_n\in X_n$ such that $\dist(x_n,\partial X_n)\geq K_n$ we have 
$$\ri(x_n)\geq\min\left(\sigma_n,\frac{K_n}{2}\right).$$
In particular $\ri(x_n)\to\infty$ for such a sequence of points $(x_n)_{n\in\N}$.
\end{lemma}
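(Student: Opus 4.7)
My plan is to apply Lemma~\ref{l.inj_radius} to the subsurface $X_n$ with half-collar parameter $K_0=K_n$. A preliminary case analysis handles one direction: any primitive geodesic loop $\gamma$ at $x_n$ in $\Sigma_n$ that meets $\partial X_n$ must contain two sub-paths each of length at least $\dist(x_n,\partial X_n)\geq K_n$, so $l_\gamma\geq 2K_n$. It therefore suffices to control loops entirely contained in $X_n$, which is exactly what the lemma provides. To apply it I need two auxiliary facts: (a)~the internal systole $\tau_n$ of $X_n$ satisfies $\tau_n\geq\sigma_n$; and (b)~the compatibility inequality $K_n\leq\sys(X_n)\cosh(K_n/2)$ holds from some $n$ onwards.

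The heart of the argument is (a). Let $\gamma\subseteq\Int(X_n)$ be a closed geodesic not isotopic in $X_n$ to any boundary component; I want to show it is also not isotopic in $X_n^{\ast}$ to any boundary component of $X_n^{\ast}$, whence $l_\gamma\geq\sigma_n$ by definition of the internal systole. Suppose for contradiction there is an essential embedded annulus $A\subseteq X_n^{\ast}$ between $\gamma$ and some component $\beta$ of $\partial X_n^{\ast}$. The intersection $A\cap\partial X_n$ is a disjoint union of simple closed curves lying in $\Int(A)$: a contractible such curve would be a component of $\partial X_n$ bounding a disk in $\Sigma_n$, impossible since components of $\partial X_n$ are closed geodesics in a hyperbolic surface. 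Hence every component of $A\cap\partial X_n$ is parallel in $A$ to $\gamma$. Taking the innermost such component $\alpha$ produces a sub-annulus $A'\subseteq A$ bounded by $\gamma$ and $\alpha$, with $\Int(A')$ disjoint from $\partial X_n$; since $\gamma\subseteq\Int(X_n)$ this forces $A'\subseteq X_n$, so $\gamma$ is isotopic in $X_n$ to $\alpha\subseteq\partial X_n$, contradicting the choice of $\gamma$.

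For (b), I would use that finite Riemannian coverings never decrease the systole: any closed geodesic in $\Sigma_n$ projects by the local isometry $P_n$ to a closed geodesic of the same length in $\Sigma_0$, so $\sys(X_n)\geq\sys(\Sigma_n)\geq\sys(\Sigma_0)>0$. Since $\cosh(K_n/2)$ grows exponentially while $K_n\to\infty$, the inequality $\sys(\Sigma_0)\cdot\cosh(K_n/2)\geq K_n$ holds from some $n_0$ onwards. Combining (a) and (b), Lemma~\ref{l.inj_radius} applied to $X_n$ yields $\ri(x_n)\geq\min(\tau_n,K_n/2)\geq\min(\sigma_n,K_n/2)$, which tends to infinity as $\sigma_n,K_n\to\infty$. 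The main obstacle is step (a): $X_n$ and $X_n^{\ast}$ have different boundaries, so a priori a closed curve in $X_n$ could be boundary-parallel in the larger surface without being so in the smaller one; the innermost-curve argument on the witnessing annulus is what rules out this pathology.
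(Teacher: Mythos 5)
Your proposal is correct and reaches the stated bound, but your argument for the key step (a) --- that the internal systole of $X_n$ is at least $\sigma_n$ --- takes a needlessly heavy topological route. Definition~\ref{d.collars1} offers a second characterization of the internal systole: it is the smallest length of a closed geodesic contained in the \emph{interior}. Since $S_n^\ast\subset S_n$ one has $X_n\subset X_n^\ast$ and $\partial X_n^\ast\cap X_n\subset\partial X_n$, hence $\Int(X_n)\subset\Int(X_n^\ast)$: every closed geodesic in $\Int(X_n)$ is a closed geodesic in $\Int(X_n^\ast)$ and so has length at least $\sigma_n$, with no isotopy considerations at all. This is exactly the paper's one-line remark. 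You instead use the ``not isotopic to a boundary component'' characterization and run an innermost-curve surgery on a witnessing annulus $A$ to rule out $\gamma$ being boundary-parallel in $X_n^\ast$. That argument is sound (after perturbing $A$ to meet $\partial X_n$ transversally, and with a separate word for the case where $\beta$ coincides with a component of $\partial X_n$), but what it proves is immediate anyway: two distinct closed geodesics on a hyperbolic surface are never freely homotopic, and $\gamma\subset\Int(X_n)$ is disjoint from $\partial X_n^\ast$, so $\gamma$ cannot be homotopic to any boundary geodesic of $X_n^\ast$. The ``pathology'' you flag at the end therefore cannot occur, for an elementary reason that bypasses the annulus surgery entirely. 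On the positive side, your preliminary observation --- a geodesic loop at $x_n$ in $\Sigma_n$ that leaves $X_n$ has length at least $2K_n$, so it suffices to bound loops contained in $X_n$, which is what Lemma~\ref{l.inj_radius} controls --- is a genuine clarification that the paper leaves implicit, and your verification of the compatibility inequality via $\sys(X_n)\geq\sys(\Sigma_0)>0$ and $K_n\to\infty$ matches the paper exactly.
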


\begin{proof}
Note first that for every $n\in\N$, $\sys(X_n)\geq\sys(\Sigma_0)>0$ and that $K_n\to\infty$ as $n\to\infty$. Hence there exists $n_0>0$ such that for every $n\geq n_0$,
$$K_n\leq\sys(X_n)\cosh\left(\frac{K_n}{2}\right).$$
Finally $\sigma_n$ is smaller that the internal systole of $X_n$ (which is contained inside $X_n^\ast$). Therefore we can use Lemma \ref{l.inj_radius} and order to prove the result.
\end{proof}

\paragraph{ {\bf Level $i$ subsurfaces --}} A difficulty that we have to deal with in order to prove our main theorems is that there could exist subtrees of the forest $\cT$ with a root appearing at an arbitrarily large floor. These will correspond to components of $S_n$ which are disjoint from the lift of $S_{n-1}$ contained in $\Sigma_n$.


Notice that the surfaces $S_v$ can be written as an exhaustion of subsurfaces corresponding to lifts of surfaces associated to vertices below $v$. We will introduce the subsurfaces $S_{v,i}\dans S_v$ that will denote the (closure of) the difference between the mentioned exhausting subsurfaces. For example, when $i$ is the floor to which $v$ belongs, $S_{v,i}$ will designate the subsurface of $S_v$ which does not come from the embedding of $S_{i-1}$. When $v$ is above floor $i$, $S_{v,i}$ will denote the lift of $S_{w,i}$ inside $S_v$, where $w\in V_i(\cT)$ is the vertex at floor $i$ below $v$.


More precisely, let $V_{n}^{k}(\mathcal{T})$ denote the set of vertices in $V_n(\mathcal{T})$ that belong to a subtree $\mathcal{T}_v$ with $v\in R_k(\mathcal{T})$.

We define the family of \emph{level $i$ surfaces} $\{S_{v,i}:v\in\mathcal{T},i\in\N\}$ as follows. 
\begin{itemize}

\item If $v\in V_{n}^{k}(\mathcal{T})$ and $i<k$ or $i>n$ define $S_{v,i}=\emptyset$.
\item If $v\in V_n^{k}(\mathcal{T})$ and $i=n$ define $S_{v,i}$ as \begin{itemize}
\item $S_v$ if $k=n$ (i.e. if $v$ is a root appearing at floor $k$)
\item $\overline{S_v\setminus j_e(S_{o(e)})}$ with $v=t(e)$ if $n>k$ (this is one of the building blocks defined in \S \ref{ss.directlimit}).
\end{itemize}
\item The family satisfies the recurrence relation $S_{t(e),i}=j_e(S_{o(e),i})$ for $e\in E_m(\mathcal{T})$ and every $m\in\N$ and $i\leq m$.
\end{itemize}
Informally speaking, $S_{v,i}$ is the part of $S_v$ that ``grew'' at level $i$. 

Notice that $$S_v=\bigcup_{i\in\N} S_{v,i}$$  is a decomposition by subsurfaces with geodesic boundary meeting each other along boundary components. We define $S_{n,k}$ as the union of all the subsurfaces $S_{v,i}$ with $v\in V_n(\mathcal{T})$ and $i\leq k$

Then we define $X_{n,k}$ as the admissible complement of $S_{n,k}$. Note that by definition $X_{n,n-1}=X_n^{\ast}$. Note that these surface have the following decomposition
\begin{equation}
\label{eq.deco_Xnk}
X_{n,k}=X_n\cup\bigcup_{v\in V_n(\cT)}\left(\bigcup_{i=k+1}^n S_{v,i}\right).
\end{equation}

\begin{remark}\label{rem_int_boundary_level}
Recall that when $i<n$ we denoted $P_{n,i}=p_i\circ\ldots\circ p_{n-1}:\Sigma_n\to\Sigma_i$. Then for every $v\in V_n(\cT)$ 
\begin{enumerate}
\item the interior of $S_{v,i}$ is mapped inside of $\Int(X_i^\ast)$ by $P_{n,i}$;
\item a boundary component of $S_{v,i}$ is either mapped isometrically onto a boundary component of $X_{i}$ by $P_{n,i}$ or onto a boundary component of $X_{i-1}$ by $P_{n,i-1}$.
\end{enumerate}
\end{remark}

\paragraph{{\bf Increasing internal systoles --}} We will need the following result.

\begin{proposition}
\label{p:systole_internal}
Let $k_n$ be a sequence of integers satisfying $k_n\leq n$ for every $n$ and $\lim_n k_n=\infty$. Then the internal systole of $X_{n,k_n}$ tends to infinity with $n$. Moreover, for every $K>0$, there exists $m\in\N$ so that, if $\gamma_n\subset\partial X_{n,k_n}$ is a sequence of boundary components with $l_{\gamma_n}<K$, then $\gamma_n\subset S_{n,m}$ for every $n\in\N$.
\end{proposition}

\begin{proof} Define $m_n=\textrm{min}_{i> n}\{\sigma_i, K_i\}$ where $\sigma_i$ and $K_i$ are as in the definition of admissible tower. Then $m_n\to\infty$ as $n$ goes to $\infty$.

We consider a sequence $(k_n)_{n\in\N}$ as in the statement of the lemma. We will prove that every closed geodesic of $\Int(X_{n,k_n})$ has a length $\geq m_{k_n}$, which is enough to prove the lemma. For every $n\in\N$, $X_{n,k_n}$ has a decomposition as in \eqref{eq.deco_Xnk}. Recall that this is a decomposition by subsurfaces with geodesic boundary and disjoint interiors. We deduce that there are four possibilities for a 
closed geodesic $\gamma\dans\Int(X_{n,k_n})$.
\begin{enumerate}
\item[\emph{Case 1.}] $\gamma$ is included inside $\Int(X_n)$.
\item[\emph{Case 2.}] $\gamma$ is included inside $\Int(S_{v,i})$ for some $v\in V_n(\cT)$ and $k_n< i\leq n$.
\item[\emph{Case 3.}] $\gamma$ is a boundary component of $S_{v,i}$ for some $v\in V_n(\cT)$ and $k_n+1<i\leq n$.
\item[\emph{Case 4.}] $\gamma$ crosses $\partial S_{v,i}$ for some $v\in V_n(\cT)$ and $k_n+1<i\leq n$.
\end{enumerate}

In Case 1, we automatically have that $l_\gamma\geq\sigma_n\geq m_{k_n}$. 

In Case 2, we use Item 1 of Remark \ref{rem_int_boundary_level} to prove that $P_{n,i}(\gamma)$ is included in $\Int(X_i^\ast)$ so $l_\gamma\geq\sigma_i\geq m_{k_n}$.

In Case 3, we use Item 2 of Remark \ref{rem_int_boundary_level} to get that $P_{n,j}(\gamma)$ is a boundary component of $X_j$ and hence belong to $\Int(X_j^\ast)$ for $j=i$ or $i-1$. This implies that $l_\gamma\geq\sigma_j\geq m_{k_n}$.

And finally in Case 4, we also use Item 2 of Remark \ref{rem_int_boundary_level}. The projection $P_{n,j}(\gamma)$ crosses a boundary component of $X_j$ inside $\Sigma_j$ for $j=i$ or $i-1$. Using Item 2 of Lemma \ref{l.crosscollars} we see that $l_\gamma\geq K_j\geq m_{k_n}$.

For the last part of the proposition, suppose by contradiction that there exist $K>0$, sequences $r_s<n_s$ converging to $+\infty$, and a sequence of boundary components $\gamma_{s}\subset X_{n_s,k_{n_s}}\setminus S_{n_s,r_s}$ satisfying $l_{\gamma_s}<K$ for every $s$. Thus, the internal systole of $X_{n_s,r_s}$ does not converge to $+\infty$ contradicting the first part of the proposition.
\end{proof}

\subsection{Proof of Theorem \ref{teo-mainabstract} }

\paragraph{ {\bf Topology of the generic leaf --}} The first and easiest step in the proof of Theorem \ref{teo-mainabstract} is to prove that the generic leaf of $\cL$ defined by an admissible tower is a disk. Then we will need a further analysis using the forest structure to identify the topology of all leaves.

 A property is said to hold for a generic leaf, if it holds for every leaf in a residual set (that is a countable intersection of dense and open subsets) which is saturated by the lamination.

\begin{lemma} The generic leaf of $\mathcal{L}$ is simply connected. 
\end{lemma}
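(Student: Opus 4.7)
\emph{Plan.} I will show that the set
$$\mathcal{D}:=\{\x\in\cL:L_\x\text{ is simply connected}\}$$
is a dense $G_\delta$ subset of $\cL$, from which the lemma follows. The key observation is that a complete hyperbolic surface with injectivity radius $+\infty$ at one point is isometric to $\mathbb{H}^2$: the exponential map there is a locally isometric diffeomorphism from the tangent plane onto its image, which by completeness must be all of $L_\x$; conversely $\mathrm{inj}_{\mathbb{H}^2}\equiv+\infty$. Setting $\mathcal{U}_R:=\{\x\in\cL:\mathrm{inj}_{L_\x}(\x)>R\}$, I therefore have $\mathcal{D}=\bigcap_{R>0}\mathcal{U}_R$.

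The first step will be to check that each $\mathcal{U}_R$ is open, so that $\mathcal{D}$ is $G_\delta$. A loop at $x_n$ in $\Sigma_n$ projects via $P_n$ to a loop of equal length at $x_0$ in $\Sigma_0$, and the image subgroups $(P_n)_{\ast}\pi_1(\Sigma_n,x_n)\subset\pi_1(\Sigma_0,x_0)$ form a decreasing chain, exactly as exploited in the proof of Proposition \ref{p.Cheeger_Gromov}. Thus $n\mapsto\mathrm{inj}_{\Sigma_n}(x_n)$ is non-decreasing in $n$ for fixed $\x$, and by Proposition \ref{p.Cheeger_Gromov} converges to $\mathrm{inj}_{L_\x}(\x)$. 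Consequently
$$\mathcal{U}_R=\bigcup_{n\in\N} \Pi_n^{-1}\bigl(\{y\in\Sigma_n:\mathrm{inj}_{\Sigma_n}(y)>R\}\bigr)$$
is a union of open sets, since $\Pi_n$ is continuous and the injectivity radius is continuous on each $\Sigma_n$.

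The decisive step is to prove $\mathcal{D}\neq\vide$; it will then be dense because $\mathcal{D}$ is \emph{leaf-saturated} (on an $\mathbb{H}^2$-leaf every point realizes $\mathrm{inj}=+\infty$), so any leaf in $\mathcal{D}$ is entirely in $\mathcal{D}$, and by minimality (Proposition \ref{p.minimality}) that leaf is dense in $\cL$. To produce a point $\x^\ast\in\mathcal{D}$ I will use Lemma \ref{l.large_inj_radius} together with the admissibility hypothesis: the ``deep'' subsets $Y_n:=\{y\in X_n:\dist(y,\partial X_n)\geq K_n\}$ consist of points with $\mathrm{inj}_{\Sigma_n}\geq\min(\sigma_n,K_n/2)\to\infty$. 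The main obstacle is compatibility along the tower, since the preimages $p_n^{-1}(Y_n)$ need not meet $Y_{n+1}$, so one cannot naively iterate. My plan to circumvent this is to invoke Proposition \ref{p:systole_internal}: for a slowly growing sequence $k_n\to\infty$, the much thicker complements $X_{n,k_n}$ also have large internal systole, so their deep regions form a far richer family of basepoints with $\mathrm{inj}_{\Sigma_n}\to\infty$. A diagonal/compactness argument in the compact space $\cL$ should then produce a compatible sequence $\x^\ast=(x_n^\ast)\in\cL$ with $\mathrm{inj}_{\Sigma_n}(x_n^\ast)\to\infty$, and hence $L_{\x^\ast}\cong\mathbb{H}^2$, completing the proof.
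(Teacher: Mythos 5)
Your decomposition $\mathcal{D}=\bigcap_{R>0}\mathcal{U}_R$ and the openness argument coincide with the paper's, whose $U_k$ is the same set as your $\mathcal{U}_k$ after unwinding the monotonicity of $n\mapsto\ri(x_n)$ and Proposition \ref{p.Cheeger_Gromov}. Your reduction of density to nonemptiness via leaf-saturation and minimality is correct and pleasant. The gap is in the nonemptiness step, which you yourself flag as decisive: the diagonal/compactness sketch does not yield what you need. If $\x^{(n)}\in\cL$ is chosen with $x^{(n)}_n$ deep inside $X_{n,k_n}$ and $\x^{(n_j)}\to\x^\ast$ in $\cL$, then for a fixed $m$ one has $x^\ast_m=\lim_j x^{(n_j)}_m$, but nothing prevents $\ri\bigl(x^{(n_j)}_m\bigr)$ from being small: by the very monotonicity you invoke, $\ri\bigl(x^{(n_j)}_m\bigr)\leq\ri\bigl(x^{(n_j)}_{n_j}\bigr)$ for $m\leq n_j$, so the large injectivity radius at the diagonal coordinate is only an \emph{upper} bound at the fixed lower coordinate, not a lower bound. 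Enlarging $X_n$ to $X_{n,k_n}$ via Proposition \ref{p:systole_internal} increases the supply of deep basepoints at each level, but does nothing to control their projections $P_{n,m}$ to a fixed lower floor, so the cross-level compatibility problem persists intact.

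The paper resolves this by proving density of each $U_k$ directly rather than exhibiting a single compatible sequence: given a finite initial segment $(x_0,\dots,x_{m_0})$ and $D\geq\mathrm{diam}(\Sigma_{m_0})$, Lemma \ref{l.large_inj_radius} produces $m$ and $y_m'\in\Sigma_m$ with $\ri(y_m')>k+D$; the path-lifting trick from the proof of Proposition \ref{p.minimality} then yields $\y\in\cL$ with $y_n=x_n$ for $n\leq m_0$ and $\dist(y_m,y_m')\leq D$, whence $\ri(y_m)>k$. This proves density of every $U_k$, and Baire's theorem then gives nonemptiness of $\mathcal{D}$ for free — exactly the step your plan is missing. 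Replacing your compactness sketch by this argument closes the gap (and makes your leaf-saturation observation unnecessary, although it remains true).
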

\begin{proof} For $k\in\N$ define
$$U_k=\{\x\in\mathcal{L}:r_{\textrm{inj}}(x_{m_0})> k \textrm{ for some }m_0\in\N\}.$$ 
First notice that $r_{\textrm{inj}}(x_n)$ is increasing with $n$. Therefore, Proposition \ref{p.Cheeger_Gromov} implies that if $\x\in U_k$ then the injectivity radius of $L_{\x}$ at $\x$ is greater than $k$.

We will show that $U_k$ is open and dense for every $k\in\N$, getting that $\bigcap_{k\in\N}U_k$ is a residual and saturated set all whose leaves are disks. 

\noindent {\it Step 1. $U_k$ is open for every $k\in\N$. } Take $\x\in U_k$ and $m_0\in\N$ so that $r_{\textrm{inj}}(x_{m_0})>k$. Since the injectivity radius function is lower semi-continuous, we can take a neighbourhood $W$ of $x_{m_0}$ in $\Sigma_{m_0}$ such that every point in $W$ has injectivity radius greater than $k$. Then, the set of $\y\in\cL$ satisfying $y_{m_0}\in W$ is an open neighbourhood of $\x$ contained in $U_k$. 

\noindent {\it Step 2. $U_k$ is dense for every $k\in\N$. } Fix two integers $k,m_0\in\N$, as well as a sequence $(x_n)_{n=0,\ldots m_0}$ satisfying $p_n(x_{n+1})=x_n$ when $n<m_0$. We will construct a sequence $\y\in\cL$ such that $y_n=x_n$ when $n\leq m_0$ and $\ri(y_n)>k$ for $n$ large enough (so $\y\in U_k$).

Fix $D\geq\textrm{diam}(\Sigma_n)$ for all $n\leq m_0$. Using Lemma \ref{l.large_inj_radius} we see that there exists $m$ and a point $y_m'\in\Sigma_m$ such that $\ri(y_m')> k+D$. Arguing as in the proof of the minimality of $\cL$ (see Lemma \ref{p.minimality}) we find a sequence  $\y\in\cL$ such that $y_n=x_n$ for $n\leq m_0$ and $\dist(y_m,y_m')\leq D$, which implies that $\ri(y_m)>k$.
\end{proof}

Now we will need to go further and associate a marking to some leaves such that the following dichotomy holds. Unmarked leaves are disks, and the topology of marked leaves is prescribed by the forest.

\paragraph{ {\bf Associated markings -- }} We note $(S_v,q_v)$ the pointed surface associated to the vertex $v$. Recall that inclusions appearing in the forest of surfaces respect the base points, i.e.
$$j_e\left(q_{o(e)}\right)=q_{t(e)}.$$

We can naturally associate a point $\x^\alpha$ in the inverse limit of $\mathbb{T}$ to every end $\alpha$ in $\cE(\mathcal{T})$. For this consider the associated ray $r_\alpha$ (recall Definition \ref{d.forest}) and let $k_0$ denote the floor of $r_\alpha$. The sequence $\x^\alpha=(x_n^\alpha)_{n\in\N}\in\cL$ is defined by 

$$x_n^\alpha=\left\{
\begin{array}{lr}
q_{r_\alpha(n)}                & n\geq k_0\\
P_{k_0,k_0-n}\left(q_{r_\alpha(k_0)}\right)  & n<k_0
\end{array}\right. .$$

\begin{definition}[Markings]\label{d.markouille} We define \emph{the set of markings associated to the admissible tower} as the subset  $(\x^\alpha)_{\alpha\in\cE(\mathcal{T})}$ included in $\mathcal{L}$. The leaves of points $\x^\alpha$ will be called \emph{marked}.
\end{definition}

\begin{lemma}[Different markings give different leaves]
\label{l:dif_comp}
Consider $\alpha$ and $\beta$ different ends of $\cE(\mathcal{T})$. Then
$$\dist(x_n^{\alpha},x_n^{\beta})\To_{n\to\infty}\infty$$
and $L_{\x^\alpha}\neq L_{\x^\beta}$. \end{lemma}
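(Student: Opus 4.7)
The plan is to prove that $\dist(x_n^\alpha,x_n^\beta)\to\infty$; the second assertion $L_{\x^\alpha}\neq L_{\x^\beta}$ is then automatic, since points in a common leaf have uniformly bounded coordinate-wise distance by definition of $\cL$.

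My first step is combinatorial. Let $k_0$ and $k_0'$ denote the floors of the roots of the subtrees of $\cT$ that contain $\alpha$ and $\beta$ respectively. For $n\geq\max(k_0,k_0')$, the points $x_n^\alpha$ and $x_n^\beta$ are by definition the base points $q_{r_\alpha(n-k_0)}$ and $q_{r_\beta(n-k_0')}$ of components of the decomposition $S_n=\bigsqcup_{v\in V_n(\cT)}S_v$. I want to check that for all sufficiently large $n$ these two vertices differ. If $\alpha$ and $\beta$ belong to distinct subtrees of $\cT$, then the vertices lie in disjoint subtrees at every floor; if they share a subtree, then since $\alpha\neq\beta$ the rays $r_\alpha$ and $r_\beta$ separate at some finite floor $N$ and disagree for all $n\geq N$.

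My second step is geometric and leverages the admissibility of $\T$. Once $v=r_\alpha(n-k_0)$ and $w=r_\beta(n-k_0')$ are distinct, $S_v$ and $S_w$ are disjoint connected components of $S_n$, so any path from $x_n^\alpha$ to $x_n^\beta$ in $\Sigma_n$ must exit $S_v$ through $\partial S_v$, traverse the admissible complement $X_n$, and re-enter via $\partial S_w$. By admissibility, these two distinct boundary components of $X_n$ carry disjoint half-collars in $X_n$ of width $K_n$, so any path in $X_n$ joining them must fully cross both half-collars and hence has length at least $2K_n$. Therefore $\dist(x_n^\alpha,x_n^\beta)\geq 2K_n$, which tends to infinity because $K_n\to\infty$.

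The one point that merits care is the choice of admissibility condition invoked: what is needed here is the half-collar width for the full admissible complement $X_n$ (not the smaller $X_n^\ast$ whose internal systole grows), because it is $\partial X_n$ that separates the components of $S_n$. Everything else is bookkeeping: combining these two steps gives the divergence of distances and hence the distinctness of the leaves.
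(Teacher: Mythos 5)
Your proof is correct and follows essentially the same approach as the paper: you show that for large $n$ the markings land in distinct components of $S_n$, then use the disjoint half-collars of width $K_n$ on $\partial X_n$ to force any connecting path to have length at least $2K_n$. Your remark that the relevant admissibility condition is the half-collar width on $X_n$ (rather than the systole of $X_n^\ast$) matches the paper's use of that same condition.
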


\begin{proof}
If $\alpha\neq\beta$, there exists $n_0$ such that when $n\geq n_0$ the points $x_n^{\alpha}$ and $x_n^{\beta}$ belong to distinct connected components of $S_n$. So any geodesic path between these two points must cross two disjoint half collars of boundary components of $X_n$. Thus, the length of this geodesic path must be greater than $2K_n$. This quantity goes to infinity with $n$ by definition and the lemma is proven.
\end{proof}
 
\paragraph{ {\bf Topology of non-marked leaves --}} A non-marked leaf is by definition the leaf of a sequence $\x\in\cL$ satisfying $\dist(x_n,x_n^\alpha)\to\infty$ for every end of the forest $\alpha\in\cE(\cT)$. We want to prove that such a leaf exists and that it is a disk. We will have to face a difficulty: new roots of the forest can appear at an arbitrary floor and we want to prove that a sequence defining a non-marked leaf goes away from all those roots.

Recall that $V_n^k(\mathcal{T})$ consists of those vertices of $V_n(\mathcal{T})$ that belong to a subtree whose root is at floor $k$. We set
$$Q^k_n=\{q_v:v\in V_n^k(\mathcal{T})\}$$
Note that $Q^k_n\dans S_n\dans \Sigma_n$. Recall that for two integers $n\geq m$, $P_{n,m}$ denotes the projection $p_m\circ\ldots\circ p_{n-1}$.

\begin{lemma}\label{l.dicotomia}
Let $\x\in\mathcal{L}$. We have the following dichotomy.
\begin{itemize}
\item Either there exists $\alpha\in\cE(\mathcal{T})$ such that $\dist(x_n,x_n^{\alpha})$ is uniformly bounded. 
\item Or, for every $k\in\N$ we have
$$\dist(x_n,Q^k_n)\To_{n\to\infty}\infty.$$
\end{itemize}
\end{lemma}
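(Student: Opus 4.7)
\emph{Sketch of proof.} The two alternatives are mutually exclusive: if $\alpha$ is an end rooted at floor $k_0$, then $x_n^\alpha = q_{r_\alpha(n-k_0)} \in Q_n^{k_0}$ for $n \geq k_0$, so boundedness of $\dist(x_n, x_n^\alpha)$ forces $\dist(x_n, Q_n^{k_0})$ to stay bounded. The plan is therefore to show that when the second alternative fails, the first one holds, and to produce the required end $\alpha$ explicitly from a combinatorial compactness argument in the forest.

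The first step is extraction. Suppose $\dist(x_n, Q_n^k)$ does not tend to infinity for some $k \in \N$. Then there exist a constant $C$, an increasing sequence $(n_j)$, and vertices $v_j \in V_{n_j}^k(\mathcal{T})$ with $\dist(x_{n_j}, q_{v_j}) \leq C$. Since the root set $R_k(\mathcal{T}) \subseteq V_k(\mathcal{T})$ is finite, I may, after passing to a subsequence, assume all $v_j$ belong to one subtree $\mathcal{T}_r$. Let $B \subseteq \mathcal{T}_r$ be the union of the ancestor chains of the $v_j$ (including the $v_j$ themselves): it is a locally finite rooted subtree (each floor of $\mathcal{T}$ being finite) which is infinite (since the depth $n_j - k$ of $v_j$ tends to infinity). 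By iterated pigeonhole I construct inductively an infinite ray $w_k = r, w_{k+1}, w_{k+2}, \ldots$ in $B$ with the crucial property that each $w_m$ is the ancestor at floor $m$ of infinitely many $v_j$: this works because at each step $w_m$ has finitely many children in $B$, so at least one of them still carries infinitely many $v_j$ as descendants. This ray defines an end $\alpha \in \cE(\mathcal{T})$ with $x_n^\alpha = q_{w_n}$ for $n \geq k$.

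The final step is the distance estimate. For $n \geq k$, I fix $j$ with $n_j \geq n$ such that $w_n$ is the ancestor of $v_j$ at floor $n$ (possible by the defining property of the ray). Iterating the relations $p_m \circ i_e = \mathrm{Id}$ and $i_e(q_{o(e)}) = q_{t(e)}$ along the chain of inclusions from $w_n$ to $v_j$ gives $P_{n_j,n}(q_{v_j}) = q_{w_n}$, while $P_{n_j,n}(x_{n_j}) = x_n$ by definition of the inverse limit. Since $P_{n_j,n}$ is a local isometry, it is distance non-increasing, so
\[
\dist(x_n, q_{w_n}) \leq \dist(x_{n_j}, q_{v_j}) \leq C.
\]
The case $n < k$ then follows from the monotonicity of $n \mapsto \dist(x_n, x_n^\alpha)$ (Remark \ref{r.distanciamonotona}).

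The step I expect to be the main obstacle is the combinatorial refinement in the ray construction. A plain application of König's lemma to $B$ only yields \emph{some} infinite ray, whereas the isometric lifting argument at every floor $n$ requires the \emph{stronger} property that $w_n$ be an ancestor of infinitely many $v_j$, so that one can find $j$ with $n_j \geq n$ realizing $P_{n_j, n}(q_{v_j}) = q_{w_n}$. This is precisely what the floor-by-floor pigeonhole selection delivers.
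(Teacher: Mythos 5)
Your proof is correct and follows essentially the same route as the paper's: the paper's ``diagonal argument'' extracting a subsequence $(n_i)$ along which the projected marked points $P_{n,m}(q_n)$ are eventually constant for every $m$ is exactly the K\"onig-type pigeonhole you make explicit, and your ray $(w_m)$ is their sequence $(y_m)$. Your treatment of the negation (working with a subsequence $(n_j)$ rather than all $n$, then upgrading to all $n\geq k$ via the 1-Lipschitz property of $P_{n_j,n}$) is a slightly more careful phrasing of the same point, and the rest (including the appeal to Remark~\ref{r.distanciamonotona} for $n<k$) matches.
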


This lemma in particular implies that every leaf is either marked or non-marked.

\begin{proof} Suppose there exist $k\in\N$ and $C>0$ so that $\dist(x_n,Q^k_n)\leq C$ for every $n\in\N$. Then, there exists a sequence of points $q_n\in Q^k_n$ such that for every $n\in\N$, $\dist(x_n,q_n)\leq C$.

Fix $m\geq k$ and define for $n\geq m$ the sequence $q_n^m=P_{n,m}(q_n)\in\Sigma_m$. For such a pair $(m,n)$ we have by definition $q_n^m\in Q_m^k$. The set $Q_m^k$ is finite so for a given $m\geq k$ infinitely many of the points $q_n^m$ coincide. Hence a diagonal argument provides an infinite subsequence of integers $(n_i)_{i\in\N}$ such that for every $m\geq k$ the sequence $(q^m_{n_i})_{i\in\N}$ of points of $Q^k_m$  is eventually constant (we denote $y_m$ the common value), and satisfies $p_m(q^{m+1}_{n_i})=q^{m}_{n_i}$ and $\dist(x_m,q^m_{n_i})\leq C$ for $i$ large enough.

Hence the sequence $(y_m)_{m\geq k}$ is the tail of a point of $\cL$ which must be marked by some end $\alpha\in\cE(\cT)$ (this is because for every $m\geq k$, $y_m$ is the marked point $q_v$ of some surface $S_v$). This implies that $y_m=x_m^{\alpha}$ for every $m$ and finally that $\dist(x_m,x_m^\alpha)\leq C$ and the lemma follows.
\end{proof}

\begin{lemma}[Controlling the topology of non-marked leaves]
\label{l:discs}
Let $\x=(x_n)_{n\in\N}\in\cL$ such that for every $\alpha\in\cE(\mathcal{T})$ we have
$$\dist(x_n,x_n^{\alpha})\To_{n\to\infty}\infty.$$
Then we have
$$\ri(x_n)\to\infty.$$
In particular $L_{\x}$ is a disk by Proposition \ref{p.Cheeger_Gromov}. 
\end{lemma}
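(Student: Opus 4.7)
The plan is to use Lemma \ref{l.dicotomia} to translate the hypothesis on ends into quantitative divergence of the distance from $x_n$ to the finite marked sets $Q_n^k$, then to find a sequence $k_n\to\infty$ (with $k_n\leq n$) for which $x_n$ sinks deep into $\mathrm{Int}(X_{n,k_n})$, whose internal systole is controlled by Proposition \ref{p:systole_internal}, and finally to apply Lemma \ref{l.inj_radius} inside $X_{n,k_n}$.

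\emph{Reduction step.} Since the hypothesis rules out any end $\alpha$ keeping $\dist(x_n,x_n^\alpha)$ bounded, Lemma \ref{l.dicotomia} yields $\dist(x_n,Q_n^k)\to\infty$ for every fixed $k$, and by a finite union $\dist(x_n,\bigcup_{k'\leq k}Q_n^{k'})\to\infty$. Because the forest has finitely many vertices at each floor and the $j_e$ are isometric embeddings, each $S_{v,i}$ is isometric to the piece attached to the ancestor of $v$ at floor $i$; hence there is a uniform bound $D_i$ on $\mathrm{diam}(S_{v,i})$. Setting $\Delta_k:=\sum_{i=0}^{k}D_i$, every point of $S_{n,k}$ lies within distance $\Delta_k$ of the finite set $\bigcup_{k'\leq k}Q_n^{k'}$.

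\emph{Diagonal choice and geometric conclusion.} A standard diagonal argument produces $k_n\leq n$ with $k_n\to\infty$ and
\[\dist\bigl(x_n,\textstyle\bigcup_{k'\leq k_n}Q_n^{k'}\bigr)-\Delta_{k_n}\to\infty,\]
whence $\dist(x_n,S_{n,k_n})\to\infty$; in particular $x_n\in\mathrm{Int}(X_{n,k_n})$ eventually, with $\dist(x_n,\partial X_{n,k_n})\to\infty$. Proposition \ref{p:systole_internal} gives $\sigma_n^\ast:=\sigma(X_{n,k_n})\to\infty$. By Remark \ref{rem_int_boundary_level}, each boundary component of $X_{n,k_n}$ lying in $\mathrm{Int}(\Sigma_n)$ projects isometrically onto a boundary component of $X_{k_n-1}$ or $X_{k_n}$, whose half-collar of width $\geq K_{k_n-1}$ (by admissibility) lifts to a half-collar of the same width in $\Sigma_n$ on the $X_{n,k_n}$ side; the remaining boundary components lie in $\partial X_n$ and have half-collars of width $K_n\to\infty$. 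Hence all boundary components of $X_{n,k_n}$ enjoy a common half-collar width $K_0(n)\to\infty$. The systole satisfies $\sys(X_{n,k_n})\geq\sys(\Sigma_0)>0$, so for $n$ large the hypothesis $K_0(n)\leq\sys(X_{n,k_n})\cosh(K_0(n)/2)$ of Lemma \ref{l.inj_radius} is automatic, and that lemma applied in $X_{n,k_n}$ yields $\ri_{X_{n,k_n}}(x_n)\geq\min(\sigma_n^\ast,K_0(n)/2)\to\infty$. Any geodesic loop at $x_n$ in $\Sigma_n$ of length less than $2\dist(x_n,\partial X_{n,k_n})$ is contained in $X_{n,k_n}$, and since a contractible loop in $X_{n,k_n}$ bounds a disc there (hence in $\Sigma_n$), non-triviality in $\Sigma_n$ implies non-triviality in $X_{n,k_n}$. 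Therefore $\ri(x_n)\geq\min\bigl(\ri_{X_{n,k_n}}(x_n),2\dist(x_n,\partial X_{n,k_n})\bigr)\to\infty$, and $L_\x$ is a disk by Proposition \ref{p.Cheeger_Gromov}.

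\emph{Main difficulty.} The subtle point is the joint diagonalisation in the second paragraph: one must simultaneously control the non-uniform divergence of $\dist(x_n,Q_n^k)$ in $k$, the cumulative diameters $\Delta_k$ (which may grow without bound with $k$), and the admissibility constants $\sigma_i,K_i$ entering Proposition \ref{p:systole_internal} and the half-collar bounds, all while ensuring $k_n\to\infty$. Once $k_n$ is chosen with sufficient care, the rest is routine hyperbolic bookkeeping around Lemma \ref{l.inj_radius}.
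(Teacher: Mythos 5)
Your proof is correct, but it follows a direct route where the paper argues by contradiction, and this changes which geometric hypotheses must be verified. Both arguments begin the same way: combine Lemma~\ref{l.dicotomia} with a uniform diameter bound on the level-$i$ pieces $S_{v,i}$ and diagonalize to produce $k_n\to\infty$, then feed $k_n$ into Proposition~\ref{p:systole_internal}. The paper applies the diagonalization to a hypothetical sequence of uniformly short geodesic loops $\gamma_n$ (trapping them in $X_{n,k_n}$) and then distinguishes two cases by hand: if $\gamma_n$ is not isotopic to $\partial X_{n,k_n}$ the internal systole kills it, and if it is isotopic to a boundary curve $\beta_n$ then $\beta_n$ has bounded length hence bounded level hence bounded diameter, so $D_n=\dist(\gamma_n,\beta_n)\to\infty$, and the single orthogonal-projection estimate $l_{\gamma_n}\geq\cosh(D_n)\,l_{\beta_n}\geq\cosh(D_n)\sys(\Sigma_0)$ gives the contradiction -- no half-collar hypothesis on $X_{n,k_n}$ is ever needed. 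You instead diagonalize on the base point $x_n$ and then invoke Lemma~\ref{l.inj_radius} for $X_{n,k_n}$ as a black box. That is legitimate, but it requires establishing the full half-collar hypothesis: that \emph{all} boundary components of $X_{n,k_n}$ carry pairwise disjoint embedded half-collars of a common width $K_0(n)\to\infty$ on the $X_{n,k_n}$ side. This is true, but not routine. One must check, via Remark~\ref{rem_int_boundary_level} and a $(1{:}1)$-lifting argument, that the width-$K_{k_n}$ half-collar of $\partial X_{k_n}\subset\Sigma_{k_n}$ lifts isometrically and embeds on the correct side of the corresponding boundary component of $X_{n,k_n}$, and that distinct lifts stay disjoint; and one must fold the quantities $K_{k_n}$ into the diagonalization so that $\dist(x_n,\partial X_{n,k_n})\geq K_0(n)$. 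You flag the latter as the ``main difficulty'' but dismiss the former as ``routine hyperbolic bookkeeping''; in fact the collar-lifting step is the real point your route adds, and it is precisely what the paper's contradiction argument is engineered to avoid. With that caveat, your proposal is a sound alternative proof.
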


\begin{proof}
We must prove that the length of every geodesic loop based at $x_n$ tends to infinity with $n$. Arguing by contradiction, suppose there exists a sequence of geodesic loops $\gamma_{n}$ based at $x_{n}$ with uniformly bounded lengths $l_{\gamma_{n}}$. 

Applying the dichotomy of Lemma \ref{l.dicotomia} and the fact that $l_{\gamma_n}$ is uniformly bounded, we get $\dist(\gamma_n,Q^k_n)\to\infty$ for every $k\in\N$. We claim that for every $k\in\N$ there exists $m_k\in\N$ so that $\gamma_n\dans X_{n,k}$ for every $n\geq m_k$. To see this, first notice that the components of $S_{n,k}$ consist of $(1:1)$-lifts of components of $S_k$, and therefore have uniformly bounded diameter. On the other hand, each component of $S_{n,k}$ contains a point $q_v\in Q^k_n$. Since $\dist(\gamma_n,Q^k_n)\to\infty$ we conclude that $\gamma_n$ does not meet $S_{n,k}$ for $n$ large enough as desired. Then, define $k_n:=\mathrm{max}\{i<n:m_i<n\}$. Clearly, $k_n\to+\infty$ and, by definition of $m_i$, the curve $\gamma_n$ is included in $X_{n,k_n}$ for every $n\in\N$. 

There are two possibilities.

\noindent \emph{Case 1. $\gamma_n$ is not isotopic to a boundary component of $X_{n,k_n}$.} Applying Proposition \ref{p:systole_internal} we get that the internal systole of $X_{n,k_n}$ goes to $+\infty$. On the other hand, since $\gamma_n$ is not isotopic to a boundary component of $X_{n,k_n}$, we have that $l_{\gamma_n}$ is greater than its internal systole. Therefore, Case 1 happens for finitely many $n$.

\noindent \emph{Case 2. $\gamma_n$ is isotopic to a boundary component of $X_{n,k_n}$.} Denote by $\beta_n$ the boundary component of $X_{n,k_n}$ isotopic to $\gamma_n$. We have $l_{\gamma_n}\geq l_{\beta_n}$ so $\beta_n$ has bounded length. Applying the second part of Proposition \ref{p:systole_internal} we get $m\in\N$ so that $\beta_n\dans S_{n,m}$ for every $n$. In particular, since $S_{n,m}$ has uniformly bounded diameter and contains a point in $Q^k_n$, we get that the distance $D_n$ from $\gamma_n$ to $\beta_n$ tends to infinity. Finally, arguing as in the proof of Lemma \ref{l.inj_radius} we obtain a lower bound.

$$l_{\gamma_n}\geq \cosh(D_n) l_{\beta_n}\geq  \cosh(D_n)\sys(\Sigma_0).$$
 This contradicts that $l_{\gamma_n}$ is uniformly bounded.


\end{proof}

We will now end the proof of Theorem \ref{teo-mainabstract} and characterize the topology of marked leaves. 

\paragraph{ {\bf Embedding direct limits --}} Given  $\alpha$, an end of $\mathcal{T}$ represented by a sequence  $(e_n)_n$ of edges, we denote respectively by $\Ss^\alpha$ and $\overline{\Ss}^\alpha$ the open and geometric direct limits of the sequence of isometric embeddings $\{j_{e_n}:S_{o(e_n)}\to S_{t(e_n)}\}$ (recall definition \ref{d.geom_direct_limit}). Recall that $\Ss^\alpha$ is diffeomorphic to the interior of $\overline{\Ss}^\alpha$.
\begin{lemma}
\label{l.encaje_Lalpha}
For every end $\alpha\in\cE(\mathcal{T})$ there exists an isometric embedding $\phi:\overline{\Ss}^\alpha\to L_{\x^\alpha}$.
\end{lemma}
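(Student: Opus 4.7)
The plan is to construct, for each vertex $v_n$ along the ray $r_\alpha$ starting at some root at floor $k_0$, a compatible family of isometric embeddings $\phi_n:S_{v_n}\to L_{\x^\alpha}$, and then to assemble them via the universal property of direct limits (Theorem \ref{t.universal_property}) and extend the resulting map by continuity to the metric completion $\overline{\Ss}^\alpha$. Let me denote $v_n=r_\alpha(n-k_0)$ for $n\geq k_0$ and $e_n\in E_n(\cT)$ the edge from $v_n$ to $v_{n+1}$.

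First I will define $\phi_n$ pointwise. Given $z\in S_{v_n}\dans\Sigma_n$, I set the sequence $\phi_n(z)=(y_m)_{m\in\N}$ by
$$y_m=\begin{cases} i_{e_{m-1}}\circ\cdots\circ i_{e_n}(z) & m\geq n,\\ P_{n,m}(z)& m<n.\end{cases}$$
The identity $p_m\circ i_{e_m}=\Id$ from the definition of ``forest included in a tower'' shows that $p_m(y_{m+1})=y_m$ for $m\geq n$, and the identity $P_{n,m}\circ p_{n-1}\circ\ldots$ handles $m<n$, so $\phi_n(z)\in\cL$. Since $i_{e_{m-1}}\circ\cdots\circ i_{e_n}:S_{v_n}\to S_{v_m}\dans\Sigma_m$ is an isometric embedding, the intrinsic distance in $\Sigma_m$ between $y_m$ and $x^\alpha_m=q_{v_m}$ is bounded by the intrinsic distance in $S_{v_n}$ between $z$ and $q_{v_n}$, uniformly in $m$, so $\phi_n(z)\in L_{\x^\alpha}$. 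The relation $\phi_{n+1}\circ i_{e_n}=\phi_n$ is a direct verification using the section property.

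Next I check that each $\phi_n$ is an injective local isometry. Since $\Pi_m:L_{\x^\alpha}\to\Sigma_m$ is a local isometry, and the composition $\Pi_m\circ\phi_n$ agrees with the isometric embedding $S_{v_n}\hookrightarrow\Sigma_m$ (via the chain of $i_{e_k}$) for every $m\geq n$, the map $\phi_n$ is a local isometry. Injectivity comes from the fact that $\Pi_n\circ\phi_n$ is the identity on $S_{v_n}$. Applying Theorem \ref{t.universal_property} to the compatible family $\{\phi_n\}$ yields an injective local isometry $\phi:\Ss^\alpha\to L_{\x^\alpha}$ satisfying $\phi\circ J_n=\phi_n$.

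Finally I extend $\phi$ to the metric completion. The leaf $L_{\x^\alpha}$ equipped with the leafwise metric $g_{L_{\x^\alpha}}$ is a complete hyperbolic surface: it is a Riemannian covering of the compact closed surface $\Sigma_0$ via $\Pi_0$. A local isometry between length spaces is $1$-Lipschitz with respect to intrinsic distances, so $\phi$ is uniformly continuous, and it extends continuously to $\overline{\Ss}^\alpha=\Ss^\alpha\cup\partial\overline{\Ss}^\alpha$ by completeness of the target. By Proposition \ref{p:geom_limit}, $\Int(\overline{\Ss}^\alpha)=\Ss^\alpha$ and the boundary is geodesic, so the extension is still an isometric immersion; injectivity of the extension follows because two distinct boundary points of $\overline{\Ss}^\alpha$ have positive distance already realized inside $\Ss^\alpha$. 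The main point requiring care is this last extension step and in particular that boundary components of $\overline{\Ss}^\alpha$ are mapped injectively, where the Fuchsian group description given in the proof of Proposition \ref{p:geom_limit} (realizing $\Ss^\alpha$ as $D_\infty/\Gamma_\infty$ sitting inside the hyperbolic surface uniformizing $L_{\x^\alpha}$) provides the cleanest way to conclude.
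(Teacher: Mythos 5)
Your proof follows the same overall strategy as the paper's: build a compatible family of isometric embeddings $\phi_n:S_{v_n}\to L_{\x^\alpha}$, invoke the universal property of direct limits to get $\phi:\Ss^\alpha\to L_{\x^\alpha}$, and extend to the metric completion using completeness of the leaf. The one genuine (if minor) difference is how $\phi_n$ is produced: the paper obtains it as $\Pi_m^{-1}\circ i_{m,n}$ for $m$ large, using the Cheeger--Gromov statement (Proposition~\ref{p.Cheeger_Gromov}) to guarantee that $\Pi_m$ restricts to a diffeomorphism on a suitable compact domain, while you write down the sequence $(y_m)_m$ explicitly. Your description is more elementary, but note that it implicitly requires checking that $z\mapsto(y_m)_m$ is continuous (indeed $1$-Lipschitz) for the \emph{leaf} topology on $L_{\x^\alpha}$, not just coordinatewise; this follows from the monotonicity of $m\mapsto\dist_{\Sigma_m}(y_m(z),y_m(z'))$ (Remark~\ref{r.distanciamonotona}) together with $\dist_{\Sigma_m}(y_m(z),y_m(z'))\le\dist_{S_{v_n}}(z,z')$, or alternatively by falling back on the Cheeger--Gromov argument the paper uses. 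On the final step you are, if anything, more careful than the paper: the paper passes to the completion and asserts the resulting map is an isometric embedding without further comment, whereas you flag that injectivity on the boundary of $\overline{\Ss}^\alpha$ needs an argument and point to the Fuchsian description of Proposition~\ref{p:geom_limit}. Both versions leave that last point at roughly the same level of rigor.
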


\begin{proof} Set $\alpha=(e_n)_n$ and $v_n=r_{\alpha}(n)$ for every $n$ greater or equal than the floor where $r_{\alpha}$ starts. By Proposition \ref{p.Cheeger_Gromov} the pointed leaf $(L_{\x^\alpha},\x^\alpha)$ is the Cheeger-Gromov limit of the sequence of pointed surfaces $(\Sigma_n,x^{\alpha}_n)_{n\in\N}$. More precisely, the proof of that Proposition shows that for every compact domain $D\dans L_{\x^\alpha}$ there exists $n_0$ such that for every $n\geq n_0$, the projection on the $n$-th coordinate induces an isometric embedding 
$$\Pi_n:(D,\x^\alpha)\to(\Sigma_n,x^{\alpha}_n).$$

For $n<m$, set $j_{m,n}=j_{e_{m-1}}\circ\ldots\circ j_{e_n}:S_{v_n}\to S_{v_m}$. This is an isometric embedding whose image is included inside the $R_n$-neighbourhood of $x_m^{\alpha}$ for some $R_n$ independent of $m>n$. Using the property stated above, for $m$ large enough the inverse of $\Pi_m$ induces a isometric embedding $\phi_n:S_{v_n}\to L_{\x^\alpha}$. This yields a sequence of isometric embeddings which satisfy $\phi_{n+1}\circ j_{e_n}=\phi_n$ (note that we have $j_{m,n}\circ\Pi_n=\Pi_m$  when the left-hand term is defined).

Using the universal property of direct limits and the fact that the leaf $L_{\x^\alpha}$ is a complete Riemannian surface, we see that there exists an isometric embedding of the geometric direct limit $\phi:\overline{\Ss}^\alpha\to L_{\x^\alpha}$.
\end{proof}

\paragraph{ {\bf Topology of the marked leaf --}} The embedding obtained in Lemma \ref{l.encaje_Lalpha} might not be surjective. In fact, its image can be complicated from the geometric point of view since we don't control the lengths of the boundary components of the surfaces $S_{v_n}$. Nevertheless, using Proposition \ref{p.recognize_surface} and the geometric properties of admissible towers, we will prove that this embedding contains all the topological information of the complete hyperbolic surface $L_{\x^\alpha}$.

Recall the definition of open direct limit in \S \ref{ss.directlimit}. Given an end $\alpha=(e_n)$ we define $\Ss^{\alpha}$ as the open direct limit of the sequence of embeddings $(j_{e_n})$.
\begin{proposition}
\label{p:topology_leaves}
The leaf of a sequence $\x^{\alpha}$ is diffeomorphic to the open direct limit $\Ss^\alpha$.
\end{proposition}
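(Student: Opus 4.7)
The plan is to apply Proposition \ref{p.recognize_surface} to $L:=L_{\x^\alpha}$, using as closed subsurface the image $S:=\phi(\overline{\Ss}^\alpha)\subset L$ provided by Lemma \ref{l.encaje_Lalpha}. Since $\phi$ is an isometric embedding of a complete hyperbolic surface, $S$ is closed in $L$; and by Proposition \ref{p:geom_limit}, $\Int(S)=\phi(\Ss^\alpha)$ is diffeomorphic to $\Ss^\alpha$. Thus it suffices to verify that every connected component $C$ of $L\setminus S$ (i) contains no closed geodesic and (ii) has connected topological boundary.

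For (i), suppose toward a contradiction that a closed geodesic $\gamma\subset L\setminus S$ exists; say $\gamma\subset B(\x^\alpha,R_0)$ for some $R_0$. A diagonal argument, combining the Cheeger--Gromov convergence of Proposition \ref{p.Cheeger_Gromov} with $\gamma\cap\phi_n(S_{v_n})=\emptyset$ for every $n$, yields a sequence $n_M\le M$ with $n_M\to\infty$ such that $\Pi_M$ is an isometry on a ball containing $\gamma\cup\phi_{n_M}(S_{v_{n_M}})$, forcing $\Pi_M(\gamma)\cap i_{M,n_M}(S_{v_{n_M}})=\emptyset$. Meanwhile, Lemma \ref{l:dif_comp} together with the admissibility condition $K_M\to\infty$ guarantees that $B(x^\alpha_M,R_0)$ avoids every other component $S_{v'}\subset S_M$ with $v'\neq v_M$. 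Combining the two exclusions gives $\Pi_M(\gamma)\cap S_{M,n_M}=\emptyset$, that is, $\Pi_M(\gamma)\subset\Int(X_{M,n_M})$. Proposition \ref{p:systole_internal} then implies that the internal systole of $X_{M,n_M}$ tends to infinity, so as $l_\gamma$ is fixed, $\Pi_M(\gamma)$ must eventually be isotopic in $X_{M,n_M}$, hence (by uniqueness of a closed geodesic in its free homotopy class inside the closed surface $\Sigma_M$) equal, to a boundary component $\beta$ of $X_{M,n_M}$. The ball constraint places $\beta\subset S_{v_M}$; then $\phi_M(\beta)\subset\phi_M(S_{v_M})\subset S$ is an isometric lift of $\beta$ contained in $B(\x^\alpha,R_0)$, and since $\Pi_M$ is injective on this ball, $\gamma=\phi_M(\beta)\subset S$, contradicting $\gamma\subset L\setminus S$.

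For (ii), each boundary component $\beta$ of $S$ arises from a boundary component of $\phi_n(S_{v_n})$ that survives in the direct limit for all $n$ sufficiently large. In $\Sigma_n$, the corresponding boundary of $\phi_n(S_{v_n})$ is a boundary component of $X_n$ with half-collar of width $K_n$ on the $X_n$ side; this half-collar lifts through $\Pi_n$ to a half-collar of $\beta$ in $L$ lying on the $L\setminus S$ side. As $K_n\to\infty$, these half-collars accumulate to an isometrically embedded funnel (or half-plane, when $\beta$ is non-compact) $F(\beta)\subset L$ whose topological boundary in $L$ is exactly $\beta$. Because $\beta\subset S$, the open set $F(\beta)\setminus\beta$ is a connected subset of $L\setminus S$ and therefore lies in a single component $C$; any point of $C\setminus(F(\beta)\setminus\beta)$ would be connected to $F(\beta)\setminus\beta$ by a path in $L\setminus S$ which would have to cross $\partial F(\beta)=\beta\subset S$, an impossibility. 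Hence $C=F(\beta)\setminus\beta$ and $\partial\overline{C}=\beta$ is connected; since every component of $L\setminus S$ must have non-empty topological boundary inside $\partial S$, every such component arises this way.

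The main obstacle is the delicate coordination in (i): one must choose $n_M\to\infty$ to trigger Proposition \ref{p:systole_internal}, while simultaneously keeping $\Pi_M$ injective on the compact set $\gamma\cup\phi_{n_M}(S_{v_{n_M}})$, whose diameter grows with $n_M$, and then handle the isotopy-to-boundary alternative using the isometric embedding $\phi_M$ (whose image, though possibly very large, sends $\beta$ inside the fixed small ball $B(\x^\alpha,R_0)$ where $\Pi_M$ is injective). With (i) and (ii) in hand, Proposition \ref{p.recognize_surface} produces the desired diffeomorphism $L\cong\Int(S)\cong\Ss^\alpha$.
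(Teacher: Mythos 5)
Your overall strategy is the paper's: apply Proposition \ref{p.recognize_surface} to $L_{\x^\alpha}$ with $S=\phi(\overline{\Ss}^\alpha)$ and verify the two hypotheses. For hypothesis (i), your diagonal argument is essentially a carefully-justified version of Lemma \ref{l.no_closed_geod_outside}; in fact the paper's assertion that $\Pi_n(\gamma)$ is ``disjoint from $S_{v_n}$'' is not automatic (since $\Pi_n$ is not globally injective and $S_{v_n}$ need not be geodesically convex in $\Sigma_n$), and restricting to $\phi_{n_M}(S_{v_{n_M}})$ inside the Cheeger--Gromov injectivity ball, at the cost of replacing $X_n$ by $X_{M,n_M}$ and invoking Proposition \ref{p:systole_internal}, is a legitimate way to fill that in. Note that your final branch (``$\Pi_M(\gamma)$ isotopic, hence equal, to a boundary component'') is vacuous: once you have $\Pi_M(\gamma)\subset\Int(X_{M,n_M})$, it cannot equal a boundary geodesic, and its length is directly bounded below by the internal systole as in Definition \ref{d.collars1}.

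For hypothesis (ii) you depart from the paper's Lemma \ref{l.no_connexion}, and here there is a genuine gap. Your premise that ``each boundary component $\beta$ of $S$ arises from a boundary component of $\phi_n(S_{v_n})$ that survives in the direct limit for all $n$ sufficiently large'' is not true in general: $S=\phi(\overline{\Ss}^\alpha)$ is the image of the \emph{metric completion} $\overline{\Ss}^\alpha$, and Proposition \ref{p:geom_limit} allows $\partial\overline{\Ss}^\alpha$ to contain non-closed geodesics that appear only as completion points, not as a persisting boundary curve of any $S_{v_n}$ (this happens already when boundary lengths of the $S_{v_n}$ go to infinity while the boundaries get pushed into the interior, as in the Loch-Ness example). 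For such $\beta$ your funnel $F(\beta)$ is not defined: the half-collars you lift through $\Pi_n$ are half-collars of curves $\beta_n\neq\beta$ that merely approximate $\beta$, and the ``accumulate to an isometrically embedded funnel/half-plane'' step is precisely what needs a proof. Consequently, the closing claim that ``every such component arises this way'' does not follow: a component $C$ of $L\setminus S$ could have all its topological boundary along non-surviving geodesics. The paper's Lemma \ref{l.no_connexion} avoids all of this by arguing at the level of arcs: it chooses a geodesic arc in $\overline{C}$ joining two putative boundary components, approximates it by arcs $\gamma_n$ with endpoints on $\partial S_n$ in $\Sigma_n$, and derives a contradiction from the half-collar widths $K_n\to\infty$ --- no surviving-boundary hypothesis is needed. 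To repair your argument along its current lines you would either have to establish that half-collars of the approximating curves $\beta_n$ converge to a genuine funnel/half-plane of $\beta$, or fall back on the arc-based argument.
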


This proposition finishes the topological characterization of all leaves of $\cL$ and the proof of Theorem \ref{teo-mainabstract}.

We will fix $\alpha\in\cE(\mathcal{T})$ and note $v_n=r_{\alpha}(n)$. Consider the closed surface defined in Lemma \ref{l.encaje_Lalpha}
$$S=\phi(\overline{\Ss}^\alpha).$$
This is a closed surface with (possibly) geodesic boundary (see Proposition \ref{p:geom_limit}).

We shall prove Proposition \ref{p:topology_leaves} in two steps, by checking that the pair $(L_{\x^\alpha},S)$ satisfies the hypotheses of Proposition \ref{p.recognize_surface}. For this, we need two lemmas.

\begin{lemma}[No closed geodesic outside of $S$]\label{l.no_closed_geod_outside} Let $C$ be a connected component of $L_{\x^\alpha}\moins S$. Then $C$ contains no closed geodesic.
\end{lemma}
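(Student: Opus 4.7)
I will argue by contradiction. Suppose $\gamma \subset C$ is a closed geodesic of length $l$. The idea is to project $\gamma$ via the convergence map $\Pi_n$ to a closed geodesic $\Pi_n(\gamma)\subset\Sigma_n$ of the same length $l$, then to locate this projection in $X_{n,k_n}$ for some $k_n\to\infty$, obtaining a contradiction with Proposition \ref{p:systole_internal}.

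First, by Proposition \ref{p:geom_limit}, $\overline{\Ss}^\alpha$ is complete, so $S=\phi(\overline{\Ss}^\alpha)$ is closed in $L_{\x^\alpha}$; since $\gamma$ is compact, $\delta:=d(\gamma, S)>0$. Let $N$ be a compact $(\delta/2)$--tubular neighbourhood of $\gamma$, which is disjoint from $S$. By Proposition \ref{p.Cheeger_Gromov}, for all $n$ large enough the map $\Pi_n$ restricts to an isometric embedding of $N$ into $\Sigma_n$, whence $\Pi_n(\gamma)$ is a closed geodesic of length $l$ in $\Sigma_n$ and $\Pi_n(N)$ is an embedded tubular neighbourhood.

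The key step is to prove the following claim: \emph{for every $k\in\N$, there exists $n_k$ such that for $n\geq n_k$, $\Pi_n(N)\cap S_{n,k}=\vide$.} I split the contribution of $S_{n,k}$ into two parts. First, the level-$\leq k$ part of the distinguished component is $S_{v_n,\leq k}=i_{n,k}(S_{v_k})$, which coincides with $\Pi_n(P_k)$, where $P_k:=\phi_k(S_{v_k})\subset S$ is a compact subset of $L_{\x^\alpha}$ not meeting $\gamma$ (since $P_k\subset S$ and $\gamma\subset L_{\x^\alpha}\setminus S$). Applying Cheeger-Gromov convergence to the compact set $N\cup P_k$ yields $\Pi_n(N)\cap S_{v_n,\leq k}=\vide$ for $n$ large. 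Second, for any other component $S_w$ with $w\in V_n(\cT)\setminus\{v_n\}$, the admissibility condition (disjoint half-collars of width $K_n\to\infty$ at $\partial X_n$) forces every path from $x_n^\alpha\in S_{v_n}$ to $S_w$ to traverse two disjoint half-collars, so $\dist_{\Sigma_n}(x_n^\alpha, S_w)\geq 2K_n$. As $\Pi_n$ is distance-nonincreasing and $\gamma$ lies in a fixed ball $B_L(\x^\alpha, R_0)$, the set $\Pi_n(N)$ is contained in $B_{\Sigma_n}(x_n^\alpha, R_0+\delta)$, disjoint from $S_w$ as soon as $2K_n>R_0+\delta$. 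Combining both, $\Pi_n(N)\subset X_{n,k}^\circ$ for $n\geq n_k$, and a diagonal argument provides $k_n\to\infty$ with $\Pi_n(N)\subset X_{n,k_n}^\circ$.

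It remains to derive the contradiction. Since $\Pi_n(N)$ is an open neighbourhood of $\Pi_n(\gamma)$ strictly contained in the interior of $X_{n,k_n}$, the geodesic $\Pi_n(\gamma)$ cannot equal any boundary component of $X_{n,k_n}$; by uniqueness of the geodesic representative in a free homotopy class, this means $\Pi_n(\gamma)$ is essential in $X_{n,k_n}$ and not isotopic to a boundary component. Hence $l=l_{\Pi_n(\gamma)}$ is bounded below by the internal systole of $X_{n,k_n}$, which tends to infinity by Proposition \ref{p:systole_internal}. This contradicts the fact that $l$ is a fixed finite number. The main obstacle in this plan is step three: showing that $\Pi_n(\gamma)$ (and more generally $\Pi_n(N)$) avoids all the ``secondary'' components $S_w$ of $S_n$ that do not lie on the ray $r_\alpha$, which is precisely where the geometric control provided by the admissibility hypothesis, via the wide half-collars around $\partial X_n$, enters in a crucial way.
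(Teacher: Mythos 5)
Your proof is correct, and it takes a route that is related to but meaningfully more careful than the paper's. The paper's proof of this lemma asserts directly that $\gamma_n:=\Pi_n(\gamma)$ is disjoint from $S_{v_n}$ (the \emph{entire} distinguished component, whose size grows with $n$), concluding $\gamma_n\subset\Int(X_n)$ and then contradicting the growth of the internal systole of $X_n^\ast\supset X_n$. Justifying disjointness from all of $S_{v_n}$ is delicate: the Cheeger--Gromov injectivity of $\Pi_n$ holds only on balls of bounded radius, while $S_{v_n}$ has unbounded diameter, so one must supply an additional argument (e.g.\ that a point of $\gamma_n\cap S_{v_n}$ at extrinsic distance $\leq R$ from $x_n^\alpha$ is also at intrinsic $S_{v_n}$-distance $\leq R$, because crossing $\partial S_{v_n}$ would cost $>K_n$, and then pull it back into $S$ using the injectivity of $\Pi_n$ on a bounded ball). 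You sidestep this subtlety entirely: you only claim disjointness from the level-$\leq k$ part $S_{n,k}$, which does live at bounded scale (it is the $\Pi_n$-image of the fixed compact set $P_k=\phi_k(S_{v_k})\subset S$), together with disjointness from the other components $S_w$ via the half-collar estimate as in Lemma~\ref{l:dif_comp}. This places $\gamma_n$ in $X_{n,k_n}$ for some $k_n\to\infty$ and lets Proposition~\ref{p:systole_internal} do the rest. This is precisely the structure used in the paper's proof of the companion Lemma~\ref{l:discs}, and it makes the argument more robust and self-contained. Two small cosmetic points: the radius bound should read $R_0+\delta/2$ rather than $R_0+\delta$ (harmless), and the short discussion about $\Pi_n(\gamma)$ not being a boundary component of $X_{n,k_n}$ is redundant, since a closed geodesic lying in $\Int(X_{n,k_n})$ already has length at least the internal systole by Definition~\ref{d.collars1}.
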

\begin{proof} Suppose there exists a closed geodesic $\gamma$ included in $C$. The projections $\gamma_n=\Pi_n(\gamma)$ define a sequence of closed geodesics in $\Sigma_n$
 disjoint from $S_{v_n}$ with the same length and located at a uniform distance to $x_n^{\alpha}$ (by definition of the leaf $L_{\x^\alpha}$). In particular
  it must be disjoint from all other subsurfaces $S_{v}$ with $v\in V_n(\mathcal{T})$, when $n$ is large enough (by Lemma
   \ref{l:dif_comp}). Hence it must be completely included in $\Int(X_n)$ for $n$ large enough, contradicting that the internal systole of $X_n$ goes to infinity as $n$ grows.
\end{proof}

\begin{lemma}[No connection of boundary components outside of $S$]\label{l.no_connexion}
Let $C$ be a connected component of $L_{\x^\alpha}\moins S$. Then the boundary $\partial C$ is connected.
\end{lemma}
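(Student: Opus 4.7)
I would argue by contradiction, so suppose $\partial C$ has at least two distinct connected components $\beta_1$ and $\beta_2$. By Proposition \ref{p:geom_limit} these are disjoint geodesics (possibly non-closed) of the complete hyperbolic surface $L_{\x^\alpha}$, both contained in $\partial S=\phi(\partial\overline{\Ss}^\alpha)$. The plan is to produce a rectifiable path of bounded length in $\overline{C}$ joining $\beta_1$ to $\beta_2$, push it down to $\Sigma_n$ via the convergence mapping $\Pi_n$, and derive a contradiction from the fact that the half-collars of $\partial X_n$ have widths $K_n\to\infty$.

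Since $C$ is open, connected, and has both $\beta_1,\beta_2$ on its boundary, one can pick a rectifiable path $\gamma:[0,1]\to\overline{C}$ from a point $p\in\beta_1$ to a point $q\in\beta_2$, with $\gamma((0,1))\subset C$, of some finite length $L$. Let $K\subset L_{\x^\alpha}$ be a compact neighborhood of the image of $\gamma$. By Proposition \ref{p.Cheeger_Gromov} there is an $n_0$ such that, for every $n\geq n_0$, the map $\Pi_n$ restricts to an isometric embedding of $K$ into $\Sigma_n$. By Lemma \ref{l:dif_comp}, for $n$ large only the component $S_{v_n}$ of $S_n$ meets $\Pi_n(K)$, and the identification of $S$ with the direct limit provided in Lemma \ref{l.encaje_Lalpha} guarantees that $\Pi_n(S\cap K)=S_{v_n}\cap\Pi_n(K)$. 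Consequently, $\beta_1$ and $\beta_2$ project under $\Pi_n$ to two distinct boundary components $\alpha_1^n$ and $\alpha_2^n$ of $S_{v_n}$, hence of $X_n$; and the interior of $\gamma$, which lies in $C\subset L_{\x^\alpha}\moins S$, maps into $\Int(X_n)$.

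Thus $\Pi_n\circ\gamma$ is a path of length $L$ in $\Sigma_n$ whose endpoints lie on two distinct boundary components of $X_n$ and whose interior lies in $\Int(X_n)$. By Definition \ref{def:admissible}, for $n$ large these two boundary components admit disjoint half-collars $H_1^n,H_2^n$ in $X_n$, each of width $K_n$. The initial portion of $\Pi_n\circ\gamma$ must travel through $H_1^n$ from $\alpha_1^n$ until it exits $H_1^n$, and can only exit by crossing the inner boundary of $H_1^n$, which lies at distance $K_n$ from $\alpha_1^n$; symmetrically at the other endpoint inside $H_2^n$. Since $H_1^n\cap H_2^n=\vide$, these two sub-arcs of $\Pi_n\circ\gamma$ are disjoint, so $L\geq 2K_n$, contradicting that $L$ is fixed while $K_n\to\infty$. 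The delicate step I expect to be the main obstacle is the identification carried out in the second paragraph: making rigorous that, for $n$ large, the local isometry $\Pi_n$ actually realizes the ``correct'' inclusion of $S$ into $S_{v_n}$ on the neighborhood $K$, so that both the distinctness of $\alpha_1^n,\alpha_2^n$ and the containment of the interior of $\Pi_n\circ\gamma$ in $\Int(X_n)$ hold simultaneously; this is where one must combine the stabilization of boundary components in the direct limit with Lemma \ref{l:dif_comp}.
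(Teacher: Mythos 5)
There is a genuine gap at the sentence ``Consequently, $\beta_1$ and $\beta_2$ project under $\Pi_n$ to two distinct boundary components $\alpha_1^n$ and $\alpha_2^n$ of $S_{v_n}$.'' You flag this as the delicate step, but it is not merely a delicacy: the conclusion can actually fail, and it is not deducible from Lemma~\ref{l:dif_comp} or Lemma~\ref{l.encaje_Lalpha}. The components $\beta_1,\beta_2$ of $\partial C$ may be complete non-closed geodesics (Proposition~\ref{p:geom_limit} explicitly allows this), and the boundary of $S$ is only approximated by the closed geodesic boundaries of the compact exhaustion $S_n\cong S_{v_n}$. A single long closed boundary geodesic of $S_{v_n}$ can pass twice through the embedded image $\Pi_n(K)$, once close to $\Pi_n(\beta_1\cap K)$ and once close to $\Pi_n(\beta_2\cap K)$; in that case the two arcs lie on the \emph{same} boundary component of $X_n$, there is only one half-collar in play, and the estimate $L\geq 2K_n$ does not follow. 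Your argument thus only covers the ``two distinct components'' case.

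The paper's proof is organized around precisely this dichotomy. It chooses $\gamma$ to be a \emph{geodesic} arc (not merely rectifiable), approximates it by geodesic arcs $\gamma_n$ with endpoints $y_n,z_n$ on $\partial S_{v_n}$, and then splits into two cases: if $y_n,z_n$ lie on two distinct boundary components, the disjoint-half-collar estimate gives $l_{\gamma_n}\geq 2K_n$, which is the argument you give; but if $y_n,z_n$ lie on the \emph{same} boundary component $\alpha_n$, then $\gamma_n$, having length $<K_n$, stays inside the half-collar of $\alpha_n$, and $\gamma_n$ together with a sub-arc of $\alpha_n$ would bound a bigon between two geodesics, which is impossible. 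This second case is the one your proposal omits, and the bigon argument crucially uses that $\gamma_n$ is geodesic --- which is another reason the paper does not take an arbitrary rectifiable path as you do. To repair your proof you would need both to take $\gamma$ geodesic (or geodesically straighten the $\Pi_n\circ\gamma$) and to add the same-component case.
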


\begin{proof}
Assume that $\partial C$ has more than one boundary component (that can be a closed geodesic or a complete geodesic). Then there exists a simple geodesic arc $\gamma$ contained in $\overline{C}$ and connecting two points $y$ and $z$ of these two boundary components. Consider $D_1,D_2$, two disks inside $L_{\x^\alpha}$ centered at $y$ and $z$ respectively. 

Now note that $S$ is an increasing union of compact surfaces with boundary $S_n$ such that $\Pi_n:S_n\to S_{v_n}$ is an isometry for every $n$. For $n$ large enough, there exist two points $y_n\in D_1\cap\partial S_n$ and $z_n\in D_2\cap \partial S_n$ and a simple geodesic arc $\gamma_n$ between them, that is outside $\textrm{Int}(S_n)$ and whose length is bounded independently of $n$. As a consequence there exists a compact domain $D$ containing all geodesics $\gamma_n$. For $n$ large enough the projection $\Pi_n|_D:D\to\Sigma_n$ is an isometric embedding and the projection of the $\gamma_n$ (still denoted by $\gamma_n$) to $\Sigma_n$ is a path in $\Sigma_n\moins S_{v_n}$ connecting two points, denoted abusively by $y_n,z_n$, of $\partial S_{v_n}$. Fix such an $n$ and assume that the quantity $K_n$ (recall that it denotes a lower bounds of the width of half-collars of boundary components of $X_n$) is $>l_{\gamma_n}$. There are two possibilities.

If $y_n$ and $z_n$ belong to two distincts connected components of $\partial S_{v_n}$ then, arguing as in Lemma \ref{l:dif_comp} (two large and disjoint half collars are attached to these components) we see that $l_{\gamma_n}\geq 2K_n$ which is a contradiction.

If $y_n$ and $z_n$ belong to the same boundary component, called $\alpha_n$, then $\gamma_n$, which has length $<K_n$, must be completely included inside a collar about $\alpha_n$. This implies that the geodesics $\alpha_n$ and the simple geodesic arc $\gamma_n$ form a bigon, which is absurd. 
\end{proof}

\section{Including forests of surfaces in towers}\label{s.including_forest}
The rest of the paper is devoted to the proof of Theorem  \ref{t.uno} and Theorem \ref{t.tres}. Both theorems will be deduced from the more general result

\begin{proposition} \label{p.forestinclusion}Given a forest of surfaces $\mathcal{S}$ there exists a tower of finite coverings $\mathbb{T}$ which is admissible with respect to $\mathcal{S}$.
\end{proposition}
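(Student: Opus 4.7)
The plan is to construct $\mathbb{T}=\{p_n:\Sigma_{n+1}\to\Sigma_n\}$ inductively, using Theorem~\ref{t.finitecovering} and the topological-room statement of Remark~\ref{r.room} at each step. At level $n$ we will have a closed hyperbolic surface $\Sigma_n$ with an admissible decomposition $(X_n,S_n)$ and an isometric identification of $S_n$ with a disjoint union $\bigsqcup_{v\in V_n(\mathcal{T})}S_v$ compatible with the $i_e$'s for edges already handled. To pass from $n$ to $n+1$ we must simultaneously: (a) produce a finite cover lifting $S_n$ isometrically (this gives $S_{n+1}^\ast$), (b) embed the ``extension pieces'' $E_e=\overline{S_{t(e)}\setminus i_e(S_{o(e)})}$ for every $e\in E_n(\mathcal{T})$ glued to $S_{n+1}^\ast$ along the appropriate boundary components, (c) freshly embed a disjoint copy of each $S_v$ with $v\in R_{n+1}(\mathcal{T})$ (new roots), and (d) maintain the two geometric admissibility conditions: $\sigma(X_{n+1}^\ast)\geq K_n$ and half-collar widths at $\partial X_{n+1}$ at least $K_n$, where $K_n\to\infty$.

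\textbf{Base case.} Pick $\Sigma_0$ to be any closed hyperbolic surface in which a disjoint union $\bigsqcup_{v\in V_0(\mathcal{T})}S_v$ embeds as a subsurface with geodesic boundary (this is a standard topological fact: any compact surface with boundary embeds in a closed surface of sufficient genus, which then carries a hyperbolic metric making the boundary geodesic). If $V_0(\mathcal{T})=\emptyset$ take any closed hyperbolic surface with $S_0=\emptyset$.

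\textbf{Inductive step.} Given $(\Sigma_n, X_n, S_n)$, apply Theorem~\ref{t.finitecovering} with Remark~\ref{r.room} to the admissible decomposition $(X_n,S_n)$, with constant $K=K_n\gg n$, with the prescribed free subsurface $S_0=\bigsqcup_{v\in R_{n+1}(\mathcal{T})}S_v$ (the new roots appearing at level $n+1$), and with $g=g_n$ larger than the genus of every building block $E_e$ for $e\in E_n(\mathcal{T})$ and every new root at level $n+1$ (finitely many choices since $V_n(\mathcal{T})$ is finite). This yields a finite cover $p_n\colon\Sigma_{n+1}\to\Sigma_n$ with an isometric $(1{:}1)$ lift $S_{n+1}^\ast$ of $S_n$, such that the complement $\widetilde X=\overline{\Sigma_{n+1}\setminus S_{n+1}^\ast}$ has internal systole $\geq K_n$, wide half-collars at every boundary component, a distinguished subsurface $X_1\subset\widetilde X$ homeomorphic to the disjoint union of the new roots, and a genus-$g_n$ collar-surface $A_j$ attached to each boundary component $\alpha_j$ of $\widetilde X$. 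We then carve out subsurfaces of $\widetilde X$: for each edge $e\in E_n(\mathcal{T})$ we choose a subsurface $\widetilde E_e\subset\widetilde X$ homeomorphic to $E_e$, attached along the boundary components $\alpha_j$ corresponding to the internal boundaries of $E_e$; we can take its boundary to be geodesic by isotoping each new boundary component to its geodesic representative inside the wide collar, which is possible because $\widetilde X$ has large internal systole. We then define $S_{n+1}$ as the union of $S_{n+1}^\ast$ with all the $\widetilde E_e$ (extending the lifts of $S_{o(e)}$ to lifts of $S_{t(e)}$) and with the subsurfaces of $X_1$ realizing the new roots; and we set $X_{n+1}=\overline{\Sigma_{n+1}\setminus S_{n+1}}$.

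\textbf{Main obstacle and verification.} The main difficulty lies in step (b): when a building block $E_e$ is connected but has several internal boundary components, $\widetilde E_e$ must simultaneously join several $\alpha_j$'s, so one cannot just take $\widetilde E_e\subset A_j$ for a single $j$. The solution is to use the freedom inside $\widetilde X\setminus\bigsqcup_j A_j$ as a ``corridor'' connecting the relevant $A_j$'s, possibly enlarging $g_n$ and applying the construction of Theorem~\ref{t.finitecovering} with a sufficient amount of room -- since the number of building blocks at level $n+1$ is finite and $\widetilde X$ has arbitrarily large area and arbitrarily large distances between its boundary components, the required subsurface can always be found with the correct topology and well-separated from every other carved-out piece. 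Once $\widetilde E_e$ and the root surfaces are placed pairwise far apart inside $\widetilde X$ (at distance $\gg K_n$, which is possible by choosing $K_n$ large enough), the newly-created boundary components of $X_{n+1}$ automatically inherit half-collars of width $\geq K_n$ from the large internal systole of $\widetilde X$ (via Lemma~\ref{l.halfcollars}), while $X_{n+1}^\ast=\widetilde X$ has internal systole $\geq K_n$ by Theorem~\ref{t.finitecovering}. Letting $K_n\to\infty$ we obtain both admissibility conditions of Definition~\ref{def:admissible}.
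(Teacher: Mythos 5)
Your overall scheme (inductive construction, one floor at a time, using Theorem~\ref{t.finitecovering} plus Remark~\ref{r.room} to lift the old floor and to create room for the new roots and the extension pieces $E_e$) matches the paper's proof closely, and the ``corridor'' observation about connected extension pieces with several internal boundary components is a real issue that the paper also handles only somewhat informally through the choice of the genus $g$ and the decomposition $X=X_1\sqcup X_2$ in Remark~\ref{r.room}. But there is a genuine gap in the way you close the inductive step, and it is precisely the point where the paper's proof does \emph{more} than yours.

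You apply Theorem~\ref{t.finitecovering} once per floor, carve $S_{n+1}$ out of the cover $\Sigma_{n+1}$, and then assert that the new boundary components of $X_{n+1}$ (those created by the carving, not already boundary of $\widetilde X$) ``automatically inherit half-collars of width $\geq K_n$ from the large internal systole of $\widetilde X$ via Lemma~\ref{l.halfcollars}.'' This is false. Lemma~\ref{l.halfcollars} gives $K_0>\tfrac{\sigma-l_\alpha}{2}$, where $l_\alpha$ is the length of the boundary geodesic in question. The new boundary components you carve are closed geodesics in $\Int(\widetilde X)$, so their lengths are bounded \emph{below} by the internal systole of $\widetilde X$, i.e.\ $l_\alpha\geq K_n$. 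Since the internal systole of $X_{n+1}$ is also comparable to $K_n$, the bound of Lemma~\ref{l.halfcollars} is vacuous: $\tfrac{\sigma-l_\alpha}{2}$ can be zero or negative. Geometrically, a pair of pants with all three boundary lengths of order $K_n$ is \emph{thin}: the shortest orthogeodesic from a boundary component to itself goes to $0$ as the boundary lengths grow, so large internal systole in no way forces wide half-collars at new boundary geodesics. Nor does placing the carved pieces pairwise far apart help, since the half-collar width at $\alpha$ is an intrinsic property of the surface $X_{n+1}$ (distance between lifts of $\alpha$ in its universal cover), not a distance to the other carved pieces.

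This is exactly why the paper applies Theorem~\ref{t.finitecovering} \emph{twice} in each inductive step. After the first cover $q_1:\Sigma^{(1)}\to\Sigma_k$ and the carving of the subsurfaces $S^{(1)}_v$, it takes a second cover $q_2:\Sigma_{k+1}\to\Sigma^{(1)}$ that lifts each $S^{(1)}_v$ isometrically and, crucially, provides the half-collars of width $\geq k+1$ for the boundary of the new admissible complement $X_{k+1}$. Because of this second cover, $X^*_{k+1}$ is no longer simply $\widetilde X$ but decomposes as $X_{k+1}\cup\bigcup_v S_{v,k+1}$; the paper's Step~4 then verifies $\sigma(X^*_{k+1})\geq k+1$ via Lemma~\ref{l.systole-gluing}, using the separate control on the internal systole of each piece, the boundary lengths (all $\geq k+1$), and the half-collar widths just obtained. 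To repair your argument you need to insert this second application of Theorem~\ref{t.finitecovering} between the carving and the definition of $\Sigma_{n+1}$, and then rerun the gluing lemma to recover the internal systole bound for $X^*_{n+1}$.
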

\begin{proof} Consider a forest of surfaces $$\mathcal{S}=(\mathcal{T},\{Z_v\}_{v\in V(\mathcal{T})},\{i_e\}_{e\in E(\mathcal{T})}).$$
Proceeding inductively, we will construct a tower of finite coverings $\mathbb{T}$ which is admissible with respect to $\mathcal{S}$.

\paragraph{ {\bf The base case --}} Consider a hyperbolic surface $\Sigma_0$ containing a set of pairwise disjoint subsurfaces with geodesic boundary $\{S_v:v\in V_0(\mathcal{T})\}$, so that each $S_v$ is homeomorphic to $Z_v$. Define $S_0:=\bigcup_{v\in V_0(\mathcal{T})} S_v$ and $X_0$ as it admissible complement.

\paragraph{ {\bf The induction hypothesis --}} Suppose we have already included $\mathcal{S}$ up to the floor $k$. Namely, we have:
\begin{itemize}
\item Finite coverings $p_i:\Sigma_{i+1}\to\Sigma_i$ for $i=0,\ldots,k-1$

\item Subsurfaces $S_i\dans \Sigma_i$ with $S_i=\bigsqcup_{v\in V_i(\mathcal{T})} S_v$ where each $S_v$ is homeomorphic to $Z_v$ and $i=1,\ldots,k$
\item A family of lifts $\{j_e:S_{o(e)}\to S_{t(e)}\}$ where $e$ ranges over $E_i(\mathcal{T})$ with $i\leq k-1$

\item A family of homeomorphisms $\{h_v:Z_v\to S_v;\,v\in V_i(\mathcal{T}),i\leq k\}$

such that for every $e\in E_{i}(\mathcal{T})$ we have $j_e\circ h_{o(e)}=h_{t(e)}\circ i_e$

\item The internal systole of $X_i^{\ast}$ and half-collar width of the boundary components of $X_i$ are greater than $i$ for $i=1,\ldots,k$; where $X_i^{\ast}$ is the admissible complement of $$S_{i}^{\ast}=\bigcup_{e\in E_{i-1}}j_e(S_{o(e)})$$ and $X_i$ is the admissible complement of $S_i$.

\end{itemize}
\paragraph{ {\bf The induction step --}}  We will now use the tools described in Section \ref{s.toolbox} in order to construct the desired covering map $p_k:\Sigma_{k+1}\to\Sigma_k$.\\

\noindent {\it Step 1. Creating new roots and space for extending level $k$ surfaces.} Take $g\in\N$ such that every surface in 
$\{Z_{t(e)};\,e\in E_{k}(\mathcal{T})\}$ can be realized a subsurface with geodesic boundary
 of a hyperbolic surface with geodesic boundary with  genus $g$ and two boundary components. 
Define $$S_0=\bigsqcup_{v\in R_{k+1}(\mathcal{T})}Z_v$$ 
the union of surfaces corresponding to the roots of $\cT$ appearing at floor $k+1$.

Applying Theorem \ref{t.finitecovering} and Remark \ref{r.room} we can construct a finite covering $q_1:\Sigma^{(1)}\to \Sigma_{k}$ such that $\Sigma^{(1)}$ admits an admissible decomposition $(X^\ast,S^\ast)$ satisfying
\begin{itemize} 
\item $S^\ast$ decomposes as 
$$S^{\ast}=\bigsqcup_{e\in E_k(\mathcal{T})} S^{\ast}_{t(e)}$$
such that $q_1$ restricted to $S^{\ast}_{t(e)}$ is an isometry onto $S_{o(e)}$ for every $e\in E_{k}(\mathcal{T})$. Denote by $j^{(1)}_e:S_{o(e)}\to S^{\ast}_{t(e)}$ the inverses of these restrictions;
\item the internal systole of $X^\ast$ is $\geq k+1$;
\item $X^{\ast}$ contains a subsurface with geodesic boundary $X=X_1\sqcup X_2$ such that:
\begin{itemize}
\item $X_1$ is homeomorphic to $S_0$;
\item For every boundary component $\alpha_j$ of $X^{\ast}$ there exists a genus $g$ surface $A_j$ included in $X_2$ with two boundary components one of which is $\alpha_j$. Moreover, subsurfaces corresponding to different boundary components are disjoint.
\end{itemize}
\end{itemize}
Note that by this last condition, $X_1\dans\Int(X^\ast)$.\\

\noindent {\it Step 2. Recognizing level $k+1$ subsurfaces --}  We shall now proceed to construct both families $S^{(1)}=\{S^{(1)}_v:v\in V_{k+1}(\mathcal{T})\}$ and $h^{(1)}=\{h_v^{(1)}:v\in V_{k+1}(\mathcal{T})\}$ simultaneously.

\begin{itemize}
\item {\it Case $v\in R_{k+1}(\mathcal{T})$. } 

By construction, we have a decomposition $X_1=\bigsqcup_{v\in R_{k+1}(\mathcal{T})}S^{(1)}_{v}$ where each $S^{(1)}_{v}$ is homeomorphic to $Z_v$. For every $v\in R_{k+1}(\mathcal{T})$ define $h_{v}$ as any homeomorphism between $S^{(1)}_{v}$ and $Z_v$.

\item {\it Case $v=t(e)$ for some $e\in E_k(\mathcal{T})$.}

Set $A=\Int(X_2)\dans\Sigma^{(1)}$. This is a key point of the whole construction. Recall that by definition of good inclusion we have that $\overline{Z_{t(e)}\setminus i_e(Z_{o(e)})}$ admits a hyperbolic structure with geodesic boundary. Therefore, the choice of $g$ and the construction of $A$, imply that there is enough room to construct a pairwise disjoint family of subsurfaces with geodesic boundary $\{S^{(1)}_{t(e)};\,e\in E_{k}(\mathcal{T})\}$ and a family of homeomorphisms $\{h^{(1)}_{t(e)}:Z_{t(e)}\to S^{(1)}_{t(e)};e\in E_{k}(\mathcal{T})\}$ so that
\begin{enumerate}
\item $S^{(1)}_{t(e)}\subset S^{\ast}_{t(e)}\cup A$ 
\item $h^{(1)}_{t(e)}\circ i_e=j^{(1)}_e\circ h_{o(e)}$
\end{enumerate}

\end{itemize}

Define $$S^{(1)}=\bigcup_{v\in V_{k+1}(\mathcal{T})}S^{(1)}_v.$$ 
Using notations coherent with \S \ref{ss.decomposition}, we define $S^{(1)}_{v,k+1}$ as follows:
\begin{itemize}
\item if $v\in R_{k+1}(\mathcal{T})$, define $$S^{(1)}_{v,k+1}=S^{(1)}_{v}$$
\item Otherwise $v=t(e)$ for some $e\in E_k(\mathcal{T})$ and we define $$S^{(1)}_{v,k+1}=\overline{S^{(1)}_{v}\setminus S^{\ast}_{t(e)}}$$
\end{itemize}

Notice that, since the internal systole of $X^{\ast}$ is greater than $k+1$, so are those of surfaces $S^{(1)}_{v,k+1}$. Also for the same reason, the boundary components of these surfaces which are not in the interior of the $S_v^{(1)}$ have length greater than $k+1$. \\

\noindent {\it Step 3. Increasing collars of the admissible complement --} Applying Theorem \ref{t.finitecovering} again, we can construct a covering $q_2:\Sigma_{k+1}\to \Sigma^{(1)}$ with a subsurface $S_{k+1}$ such that
\begin{itemize}
\item $S_{k+1}$ decomposes as $$S_{k+1}=\bigsqcup_{v\in V_{k+1}(\mathcal{T})}S_v$$ where $q$ restricted to $S_v$ is an isometry onto $S^{(1)}_v$ for every $v\in V_{k+1}(\mathcal{T})$; 
\item The admissible complement of $S_{k+1}$, denoted by $X_{k+1}$, satisfies:
\begin{enumerate}
\item The internal systole of $X_{k+1}$ is greater than $k+1$
\item The boundary of $X_{k+1}$ has a collar of width $k+1$
\end{enumerate}

\end{itemize}

Let $\phi_v^{(2)}$ denote the inverse of $q_2|_{S_v}$ for every $v\in V_{k+1}(\mathcal{T})$. We define
\begin{itemize}
\item  $p_k$ as the composition $q_2\circ q_1$; 
\item $j_e$ as the composition $\phi_{t(e)}^{(2)}\circ j_e^{(1)}$ for every $e\in E_{k}(\mathcal{T})$;
\item $h_v$ as the composition $\phi_v^{(2)}\circ h^{(1)}_v$ for every $v\in V_{k+1}(\mathcal{T})$;
\item $S_{v,k+1}$ as the images by the maps $\phi_v^{(2)}$ of the surfaces $S^{(1)}_{v,k+1}$;
\end{itemize}
We define $S_{k+1}^{\ast}=\bigcup_{e\in E_{i-1}}j_e(S_{o(e)})$ and $X_{k+1}^{\ast}$ as its admissible complement.\\

\noindent {\it Step 4. The internal systole of $X_{k+1}^{\ast}$ --} It remains to check that the internal systole of $X^{\ast}_{k+1}$ is greater than $k+1$. For this, we consider the decomposition stated in \eqref{eq.deco_Xnk}
$$X_{k+1}^\ast=X_{k+1}\cup\bigcup_{v\in V_{k+1}}S_{v,k+1}.$$

%

The following properties are satisfied.
\begin{itemize}
\item The boundary components of subsurfaces in the decomposition lying on the interior of $X_{k+1}^{\ast}$ satisfy
\begin{enumerate}
\item They are contained in the boundary of $X_{k+1}$ and therefore have a half-collar width greater than $k+1$ by construction. 
\item They belong to a boundary component of $S_{v,k+1}$ not contained in the interior of the $S_v$, for some $v\in V_{k+1}(\mathcal{T})$. Therefore they have length greater than $k+1$ (see the end of Step 2)
\end{enumerate}
\item The internal systole of $X_{k+1}$ is greater than $k+1$
\item The internal systole of each surface $S_{v,k+1}$ is greater than $k+1$

\end{itemize}

Now we are in condition to apply Lemma \ref{l.systole-gluing} to the decomposition of $X_{k+1}^{\ast}$ and get that the internal systole of $X_{k+1}^{\ast}$ is greater than $k+1$ as desired. This finishes the proof of the Proposition.
\end{proof}

\section{Proof of main theorems}\label{s.final_proof}
\subsection{Combinatorial representation of surfaces}
In order to prove Theorems \ref{t.uno} and \ref{t.tres} we will apply Proposition \ref{p.forestinclusion} and Theorem \ref{teo-mainabstract} to particular choices of surface forests. We proceed to define coding trees which will give us a combinatorial framework to construct surface forests. We recall here that we will prove this theorems ignoring the cylinder leafs, but that the arguments may be easily addapted to include cylinders as leaves (c.f. Remark \ref{r.cyl}). 

\paragraph{ {\bf Coding trees --}} In \cite{BWal}, Bavard and Walker define a combinatorial object (they call it \emph{core tree}), which is a rooted and colored tree that gives a combinatorial framework for describing open surfaces. We give a variation of their setting that is more suited to our purposes. 

\begin{definition} A \emph{coding tree} is a connected and rooted tree $\Lambda$ having two types of vertices
\begin{itemize}
\item \emph{boundary vertices} that must have valency $1$ or $2$
\item \emph{simple vertices} that must have valency $1$, $2$ or $3$

\end{itemize}
and satisfying the following properties: 
\begin{itemize}
\item The \emph{root} must be a simple vertex.
\item All simple vertices must have valency strictly greater than one, with the only possible exception of the root. 
\item  Edges must join a boundary vertex with a simple vertex (they cannot joint two vertices of the same type).
\end{itemize}
\end{definition}

Any coding tree $\Lambda$ can be written as the union of the balls of radius one around its simple vertices. Notice that this balls meet at boundary vertices, and consist of simple vertices with one, two or three adjacent vertices, that will be calles valency one, two or three basic pieces respectively. Call these subgraphs the \emph{basic subgraphs} of $\Lambda$.

\paragraph{ {\bf From coding trees to pointed surfaces --}} We proceed to describe how to associate a pointed surface to coding tree $\Lambda$. First, associate to each basic subgraph of $\Lambda$ a surface with boundary as follows
\begin{itemize}
\item Valency one simple vertex $\leftrightarrow$ surface of genus one with one  boundary component
\item Valency two simple vertex $\leftrightarrow$ surface of genus one with two boundary components
\item Valency three simple vertex $\leftrightarrow$ surface of genus zero with three boundary components
\end{itemize} 

\begin{figure}[h!]
\centering
\includegraphics[scale=0.4]{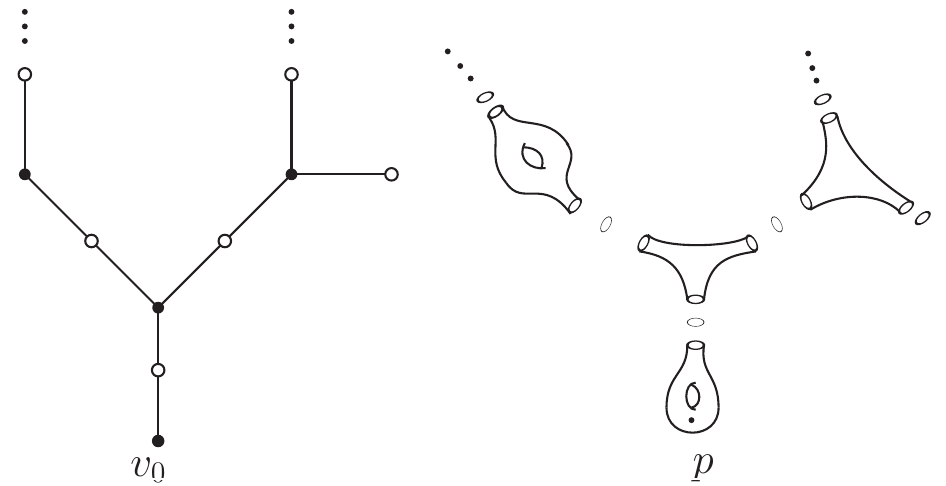}
\caption{From coding trees to surfaces.}\label{fig:tree_surf}
\end{figure}

Consider also a correspondence between the boundary vertices of each basic piece and the boundary components of its associated surface. Then, glue the boundary components of two different surfaces if their associated boundary vertices are equal. To obtain a pointed surface, take any point in the interior of the subsurface corresponding to the root of $\Lambda$. See Figure \ref{fig:tree_surf}.

Notice that the topological type of the resulting pointed surface does not depend on the choices that we have made. Abusing notation we will denote $\Sigma_{\Lambda}$ to either the topological type obtained by the previous construction or to a particular representative of this equivalence class of topological surfaces.

\paragraph{ {\bf Every surface can be obtained from a coding tree --}}

The proof of the following Lemma is a straightforward adaptation of \cite[Lemma 2.3.1]{BWal}  which relies on the classification of open surfaces given by classifying triples. 

\begin{lemma}
\label{t:bavad_walker}
Let $S$ be an open orientable surface other than the disk and the annulus. Then there exists a coding tree $\Lambda$ such that $S$ is homeomorphic  $\Sigma_{\Lambda}$.
\end{lemma}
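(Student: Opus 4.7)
By the classification Theorem \ref{classification}, it is enough to show that every classifying triple $\tau=(g,\cE_0,\cE)$ with $\cE$ not homeomorphic to the one-point space $\{pt\}$ (the disc's triple is $(0,\vide,\{pt\})$ and the annulus' is $(0,\vide,\{p_1,p_2\})$; the proof will in fact require only excluding $\cE=\{pt\}$ since the annulus is easily realized directly) can be realized as the classifying triple of $\Sigma_\Lambda$ for some coding tree $\Lambda$. The strategy is to first read off how the invariants $g(\Sigma_\Lambda)$, $\cE(\Sigma_\Lambda)$ and $\cE_0(\Sigma_\Lambda)$ depend on the combinatorics of $\Lambda$, and then, starting from $(g,\cE_0,\cE)$, to build a tree with the right end space and decorate it with the right basic pieces.

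The first task is to compute the classifying triple of $\Sigma_\Lambda$ in terms of $\Lambda$. Each basic piece is a compact surface of genus $0$ or $1$, so $\Sigma_\Lambda$ is the union of compact pieces glued along common boundary vertices. Its set of ends $\cE(\Sigma_\Lambda)$ is in canonical bijection with the disjoint union of the ends of the underlying tree of $\Lambda$ and the set of boundary vertices of valency one, equipped with a natural compact, totally disconnected topology. Its genus is the (possibly infinite) sum over all valency one and valency two simple vertices (each contributing one handle); and an end $e$ of $\Sigma_\Lambda$ is accumulated by genus if and only if the corresponding end of $\Lambda$ (it cannot come from a free boundary vertex, for those contribute a planar end) is the limit of an infinite sequence of valency-one or valency-two simple vertices.

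Given a classifying triple $\tau=(g,\cE_0,\cE)$ with $\cE$ not a singleton, the plan is then to proceed in two steps. First, realize $\cE$ as a set of ends of a combinatorial tree $T$: since $\cE$ is a nonempty closed subspace of a Cantor set, standard constructions yield a locally finite tree whose end space is homeomorphic to $\cE$, and we may further arrange that $\cE_0\dans\cE$ corresponds to a distinguished closed subset of the ends of $T$. Second, convert $T$ into a coding tree $\Lambda$ by replacing each of its vertices by a basic subgraph whose valency matches the number of children in $T$: interior edges of $T$ become valency-two boundary vertices shared between adjacent basic pieces. A simple vertex is declared to be of valency one (genus-contributing) or valency three (planar) according to the following rule: if $\cE_0\neq\vide$ (hence $g=\infty$), we place genus-contributing pieces densely along every ray converging to an end of $\cE_0$ and only planar pieces along rays converging to ends of $\cE\moins\cE_0$; if $\cE_0=\vide$ and $g$ is finite, we use exactly $g$ genus-contributing pieces concentrated in a finite subtree, and planar pieces elsewhere. (Some care is needed when the underlying tree $T$ has vertices of valency greater than three; they can be replaced by a finite binary subtree of valency-three pieces.)

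The main technical delicacy, and the step that requires the most care, is to ensure that the resulting coding tree $\Lambda$ satisfies the valency constraints of the definition (every simple non-root vertex of valency at least two, edges only between boundary and simple vertices) while simultaneously producing the correct end space, the correct locus of genus accumulation, and the correct total genus. This is essentially the content of \cite[Lemma 2.3.1]{BWal}, and the adaptation to our slightly different conventions (in particular the fact that boundary vertices of valency one produce planar ends rather than boundary components in the open surface $\Sigma_\Lambda$) is straightforward. The excluded cases are precisely those where $\cE$ is a point: the disc cannot be realized because the unique valency-one basic piece is a one-holed torus, not a disc. The annulus is excluded from the statement of the lemma for consistency, but it could be accommodated by the degenerate formalism alluded to in Remark \ref{r.cyl}.
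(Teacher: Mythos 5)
Your reduction step is wrong, in a way that both excludes cases the lemma requires and includes an impossible one. You reduce to realizing every classifying triple $(g,\cE_0,\cE)$ with $\cE$ not a single point, and you even assert that ``the excluded cases are precisely those where $\cE$ is a point.'' But Lemma~\ref{t:bavad_walker} asks you to realize every open surface other than the disc and the annulus, and many one-ended surfaces are realizable and must be handled: the Loch Ness monster, with triple $(\infty,\{pt\},\{pt\})$, comes from the coding tree consisting of a valency-one root followed by an infinite ray of valency-two simple vertices glued consecutively along valency-two boundary vertices; and the one-ended surface of finite genus $g\geq 1$, with triple $(g,\vide,\{pt\})$, comes from the finite chain with a valency-one root, $g-1$ valency-two simple vertices, and a terminal free boundary vertex. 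Your plan, as written, never builds these.

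The parenthetical claim that ``the annulus is easily realized directly'' is also false. If the root has valency one or two then the corresponding basic piece already has genus one, forcing $g\geq 1$; if instead all simple vertices (including the root) have valency three, then a finite coding tree with $k$ simple vertices has $k+2\geq 3$ free boundary circles, and an infinite one contributes at least a Cantor set's worth of ends, either from tree ends or from infinitely many free boundary vertices along any ray. In every $g=0$ case $|\cE|\geq 3$, so no coding tree realizes $(0,\vide,\{p_1,p_2\})$. The correct dichotomy is that a classifying triple is realized by some coding tree if and only if it is neither $(0,\vide,\{pt\})$ nor $(0,\vide,\{p_1,p_2\})$ --- exactly the disc and the annulus, matching the lemma's statement. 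Once you correct the case analysis, the remainder of your plan (build a tree realizing $\cE$, place genus pieces eventually along rays converging to ends of $\cE_0$, and encode leaves of the combinatorial tree as free boundary vertices rather than as non-root valency-one simple vertices, which the definition forbids) follows the Bavard--Walker approach the paper cites; but as written the argument does not establish the lemma.
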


\paragraph{ {\bf Good inclusions between coding trees --}} Consider coding trees $\Lambda_1$ and $\Lambda_2$. We say that an inclusion $j:\Lambda_1\to\Lambda_2$ is a \emph{good inclusion of coding trees} if $j$ is an injective map that preserves the graph structure, the roots, the vertices types, and whose image boundary consists of a union of boundary vertices.

Notice that a good inclusion of coding trees $j:\Lambda_1\to\Lambda_2$ naturally induces a good inclusion of surfaces $j_{\Sigma}:\Sigma_{\Lambda_1}\to\Sigma_{\Lambda_2}$ that respects subsurfaces coming from basic subgraphs. 

When there is a bijective good inclusion between two coding trees we will say they are isomorphic. Given a coding tree $\Lambda$ we will note $[\Lambda]$ its class of isomorphism.

\paragraph{ {\bf Forests of coding trees --}}

In the rest of the section we want to show how to use this combinatorial description for surfaces in order to exhibit a combinatorial way to organize non-compact surfaces. 

Analogously to the definition of forest of surfaces, we define a \emph{forest of coding trees} as a triple $$\mathcal{T}^{\ast}=(\mathcal{T},\{\Lambda_v\}_{v\in V(\mathcal{T})},\{j_e\}_{e\in E(\mathcal{T})})$$ where $\{\Lambda_v\}_{v\in V(\mathcal{T})}$ is a family of finite coding trees and $\{j_e\}_{e\in E(\mathcal{T})}$ is a family of good inclusions $j_e:\Lambda_{o(e)}\to\Lambda_{t(e)}$. 

There is also an analogous definition of set of \emph{limit coding trees} that associates a coding tree $\Lambda^{\alpha}$ to each end $\alpha\in\cE(\mathcal{T})$.

 \paragraph{{\bf From forests of coding trees to forests of surfaces --}}
Consider $\mathcal{T}^{\ast}$ a forest of coding trees over $\mathcal{T}$. We define its associated forest of surfaces as follows (see Figure \ref{fig:forestouille}):

\begin{figure}[h!]
\centering
\includegraphics[scale=0.6]{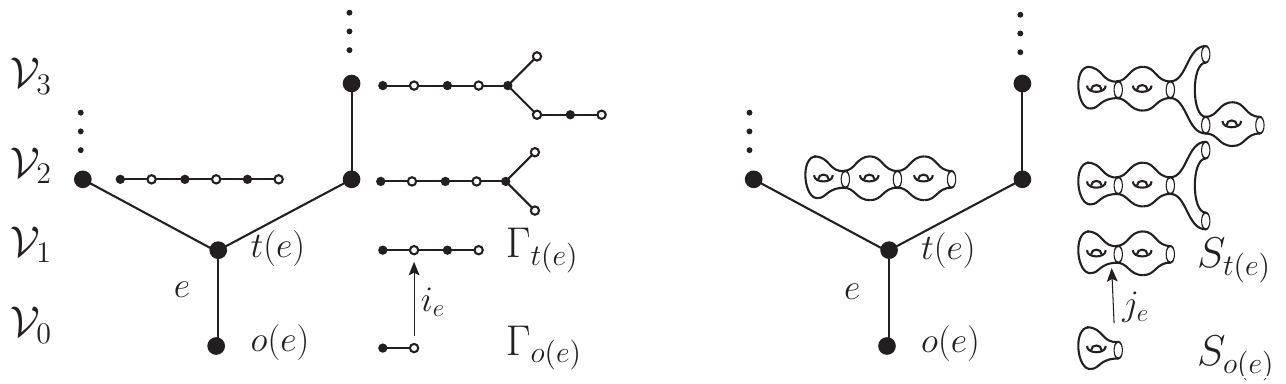}
\caption{A forest of coding trees and its associated forest of surfaces.}\label{fig:forestouille}
\end{figure}

For every vertex $v\in V(\mathcal{T})$ take a surface $S_v$ with the topological type of $\Sigma_{\Lambda_v}$. Then, for every edge $e\in E(\mathcal{T})$ consider a good inclusion of surfaces $({j_e})_{\Sigma}:S_{o(e)}\to S_{t(e)}$ preserving the subsurfaces corresponding to basic subgraphs. We will note $\Sigma_{\mathcal{T}^{\ast}}$ the resulting forest of surfaces. 

Although several choices were made in the previous construction, different choices give rise to \emph{isomorphic trees of surfaces}. Namely, if

$$\Sigma^{(1)}_{\mathcal{T}^{\ast}}=(\mathcal{T},\{S_v\}_{v\in V(\mathcal{T})},\{{(j_e)}_{\Sigma}\}_{e\in E(\mathcal{T})})$$ and $$\Sigma^{(2)}_{\mathcal{T}^{\ast}}=(\mathcal{T},\{Z_v\}_{v\in V(\mathcal{T})},\{{(i_e)}_{\Sigma}\}_{e\in E(\mathcal{T})})$$
are trees of surfaces obtained taking different choices, we can find a family of homeomorphisms $\{h_v:Z_v\to S_v\}_{v\in V(\mathcal{T})}$ preserving roots and satisfying $(j_e)_\Sigma\circ h_{o(e)}=h_{t(e)}\circ (i_e)_\Sigma$ for every $e\in E(\mathcal{T})$. In particular, if $\alpha$ is an end of $\cE(\mathcal{T})$, the corresponding limit surfaces $\mathbf S^{\alpha}$ and $\mathbf Z^{\alpha}$ are homeomorphic. 

\begin{remark} \label{r.limitcorrespondence} It follows directly from the definitions that $\mathbf S^{\alpha}$ is homeomorphic to the interior of $\Sigma_{\Lambda^{\alpha}}$

\end{remark}

\subsection{Constructing surface forests}\label{s.constructingcodingtrees} By Theorem \ref{teo-mainabstract} and Proposition \ref{p.forestinclusion}, in order to prove Theorem \ref{t.uno}, it is enough to show that the family of all surfaces can be realized \emph{inside} the set of ends  of a forest of surfaces. In order to prove Theorem \ref{t.tres}, it is enough to show that every \emph{finite or countable} family of open surfaces can be realized  \emph{as} the set of ends of  a forest of surfaces (see Remark \ref{r.forestjustif}).

On the other hand, by Remark \ref{r.limitcorrespondence} and Lemma \ref{t:bavad_walker} this reduces to realizing certain sets of coding trees as the set of limit coding trees of a forest.

\paragraph{ {\bf The universal forest of coding trees--} }
In the particular case of Theorem \ref{t.uno}, it is enough to construct a forest of coding trees $\mathcal{T}^{\ast}$ so that every coding tree appears as a limit coding tree.

Consider $(\Lambda,v_0)$ a coding tree. Notice that even though not every ball included in $\Lambda$ is a coding tree, if $v_0$ is the root, then $B_{\Lambda}(v_0,2n+1)$ is also a coding tree for every $n\in\N$. 

\paragraph{ {\bf The construction of $\mathcal{T}^{\ast}$ --}} We start defining the underlying forest $\mathcal{T}$, for this we define $$V_n(\mathcal{T})=\Bigg\{\bigg[B_{\Lambda}(v_0,2n+1)\bigg]:(\Lambda,v_0) \text{ coding tree}\Bigg\}$$ 
 
Since vertices in coding trees have bounded valency, $V_n(\mathcal{T})$ is a finite set. We define that $$([\Lambda_1],[\Lambda_2])\in E_n(\mathcal{T})\subset V_n(\mathcal{T})\times V_{n+1}(\mathcal{T})$$ if there exists a good inclusion $i:\Lambda_1\to\Lambda_2$ (recall that good inclusions preserve the root). Given $[\Omega]\in V(\mathcal{T})$ define $\Lambda_{[\Omega]}$ as any representative of $[\Omega]$, and given 
$$e=([\Omega_1],[\Omega_2])\in E(\mathcal{T})$$ 
define $j_e$ as any good inclusion from $\Lambda_{[\Omega_1]}$ to $\Lambda_{[\Omega_2]}$.  Summarizing, our forest of coding trees  is $$\mathcal{T}^{\ast}=(\mathcal{T},\{\Lambda_{[\Omega]}\}_{[\Omega]\in V(\mathcal{T})},{\{ j_e\}}_{e\in E(\mathcal{T})})$$

Finally, notice that if $(\Omega,v_0)$ is a coding tree, the ray $r(n)=[B_{\Omega}(v_0,2n+1)]\subset\mathcal{T}$ represents an end $\alpha\in\cE(\mathcal{T})$ satisfying $\Lambda^{\alpha}\cong (\Omega,v_0)$.

\paragraph{ {\bf The countable forest of coding trees --}} In order to prove Theorem \ref{t.tres}, we have to construct a forest of coding trees $\mathcal{T}^{\ast}$ that realizes any given countable family of coding trees $\{\Lambda_1,\ldots,\Lambda_i,\ldots\}$ as its set of limit coding trees. 

\paragraph{ {\bf The construction of $\mathcal{T}^{\ast}$:}}  We start defining the underlying forest. For this, set $$V_n(\mathcal{T})=\{B_{\Lambda_i}(v_i,r_{n,i}):i=1,\ldots,n\}$$ with $r_{n,i}=2(n-i)+1$. Notice that in this case the vertices are not equivalence classes of coding trees, but actual coding trees. Then, we define $\Lambda_v=v$ for every $v\in V(\mathcal{T})=\bigsqcup V_n(\mathcal{T})$ and set $$(\Omega_1,\Omega_2)\in E(\mathcal{T})$$ if and only if there exists $i,k\in\N$ so that $\Omega_1=B_{\Lambda_i}(v_i,r_{k,i})$ and $\Omega_2=B_{\Lambda_i}(v_i,r_{k+1,i})$.

\appendix
\section{Appendix: Coverings and the second systole}  \label{s.appendix}
\begin{center}
\author{S{\'e}bastien Alvarez,  Joaqu\'in Brum,  Matilde Mart\'inez,  Rafael Potrie \and  Maxime Wolff}
\end{center}

\medskip 

In this appendix we prove the Theorem below. A proof was first suggested by Henry Wilton on MathOverflow, as an answer to a question posed by the authors in that site (see \cite{HW}), while the proof presented here grew out of conversations between the authors.

\begin{theoremc}\label{t.appendix}

Let $\Sigma$ be a closed hyperbolic surface, and let $\alpha\subset\Sigma$
  be a simple closed geodesic. Then, for all $K>0$, there exists a finite 
  covering $\pi:\hat \Sigma\to\Sigma$ such that
  \begin{itemize}
  \item $\hat \Sigma$ contains a non-separating simple closed geodesic such that $\pi(\hat \alpha)=\alpha$ and $\pi$ restricts to a homeomorphism on $\hat \alpha$;
  \item every simple closed geodesic which is not $\hat \alpha$ has length larger than $K$.
  \end{itemize}
\end{theoremc}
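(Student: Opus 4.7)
The plan is to cut $\Sigma$ along $\alpha$, work in the resulting surface with boundary (where the fundamental group is free and so enjoys strong separability properties), and then re-glue the pieces to obtain the desired cover of $\Sigma$.

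First, I would cut $\Sigma$ along $\alpha$ to obtain a hyperbolic surface with geodesic boundary $Y$, whose boundary consists of two isometric copies $\alpha^+, \alpha^-$ of $\alpha$ ($Y$ is connected iff $\alpha$ is non-separating). Each component of $\pi_1(Y)$ is a free group. The key tool is that in a free group, given a finitely generated subgroup $H$ disjoint from a finite union of conjugacy classes $F$, there is a finite-index supergroup of $H$ still disjoint from $F$; this goes beyond plain LERF and is crucial in what follows.

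Second, since the length spectrum of $Y$ is discrete, only finitely many primitive closed geodesics $\gamma_1,\ldots,\gamma_N$ in $\Int(Y)$ have length at most $K$, and only finitely many integers $n$ satisfy $n\,\ell(\gamma_i)\leq K$. Using the separability principle above applied to the boundary subgroup $\langle \alpha^+,\alpha^-\rangle$ and the finite union of conjugacy classes $\{[\gamma_i^n]:1\leq n\leq K/\ell(\gamma_i)\}$, I would construct a finite cover $\tilde\pi:\tilde Y\to Y$ in which each $\alpha^\pm$ admits a distinguished $1{:}1$ lift $\tilde\alpha^{\pm}$, while every primitive closed geodesic in $\Int(\tilde Y)$ not isotopic to a boundary component has length $>K$. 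By Lemma \ref{l.halfcollars}, the half-collar width of each $\tilde\alpha^{\pm}$ inside $\tilde Y$ is then at least roughly $K/2$.

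Third, I would glue the boundary of $\tilde Y$ to form a closed cover $\hat\Sigma\to\Sigma$. The chosen lifts $\tilde\alpha^+$ and $\tilde\alpha^-$ are identified via the map inherited from the gluing $\alpha^+\sim\alpha^-$ in $\Sigma$, producing a simple closed geodesic $\hat\alpha$ on which $\pi$ restricts to a homeomorphism. The remaining boundary components of $\tilde Y$ — higher-degree lifts of $\alpha^\pm$ — must be paired up compatibly with the identification in $\Sigma$. For this to be possible, the multiset of degrees on the two sides of the cut must match; if they do not match a priori, I would pass to a further balanced cover (such as the Galois closure of $\tilde Y\to Y$) to symmetrize the ramification pattern. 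If the resulting $\hat\alpha$ turns out to be separating, I would pass to one further two-sheeted cover corresponding to a class in $H^1(\hat\Sigma;\mathbb{Z}/2)$ evaluating nontrivially on $\hat\alpha$ (such a class exists after possibly taking a further finite cover).

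Finally, for the verification: any simple closed geodesic $\hat\gamma\neq\hat\alpha$ in $\hat\Sigma$ is either disjoint from $\hat\alpha$ — in which case it sits inside a copy of a region of $\tilde Y$ and the internal-systole bound there gives $\ell(\hat\gamma)>K$ — or it crosses $\hat\alpha$, in which case Lemma \ref{l.crosscollars} together with the large half-collar width forces $\ell(\hat\gamma)>K$. I expect the genuinely delicate step to be the re-gluing: arranging the finite cover of $Y$ built via free-group separability so that its boundary data (degrees and monodromy on lifts of $\alpha^\pm$) are compatible with the identification pattern of $\Sigma$, and ensuring that the resulting $\hat\alpha$ can be made non-separating. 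This combinatorial bookkeeping along the cut is where the extra flexibility of free groups, beyond what surface-group LERF provides on its own, plays its essential role.
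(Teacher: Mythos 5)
Your cut-and-reglue strategy is genuinely different from the paper's. The paper keeps $\Sigma$ closed, reduces to a metric adapted to $\alpha$, and increases the second systole \emph{one step at a time} (Theorem~\ref{teo:bis}): the curves to be opened at each step realize $\ell_2(\Sigma)$, are forced by the adapted metric to be simple with $i(\alpha,\beta)\le 1$ (Lemma~\ref{lem:BetaCool}), and are then dealt with via explicit permutation/graph-covering constructions (Lemma~\ref{lem:2curvas}) before discreteness of the length spectrum lets the iteration terminate. You instead try to open all short curves at once inside $Y=\Sigma\setminus\alpha$ using Marshall Hall separability in the free group $\pi_1(Y)$ and then reassemble. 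That is an attractive route, and the free group gives real flexibility, but as written there are two genuine gaps.

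First, after regluing, every non-distinguished lift of $\alpha^\pm$ in $\tilde Y$ becomes a closed geodesic of $\hat\Sigma$ of length $d\,\ell(\alpha)$, where $d$ is its degree over $\alpha$. If $d\,\ell(\alpha)\le K$ this violates the conclusion, yet the separability statement you invoke (find a finite-index subgroup avoiding finitely many bad conjugacy classes) says nothing about the cycle type of $\alpha^\pm$ beyond the one distinguished fixed point. Your final verification only treats curves that either cross $\hat\alpha$ or lie ``inside a copy of a region of $\tilde Y$''; a non-distinguished gluing seam does neither (it is disjoint from $\hat\alpha$ and is not an interior geodesic of $\tilde Y$), and a curve crossing such a seam needs a wide half-collar there, which Lemma~\ref{l.halfcollars} does not supply when the seam is long relative to the internal systole. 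So you need, in addition, that all other cycles of $\alpha^\pm$ on the cosets have length $\ge N$ with $N\ell(\alpha)>K$ — which is precisely what the paper's Lemma~\ref{lem:2curvas} is engineered to produce and what you still have to argue (a Stallings-graph completion argument could do this, but it is not a consequence of LERF alone). Second, the proposed fix for mismatched ramification along $\alpha^+$ and $\alpha^-$ — pass to the Galois closure of $\tilde Y\to Y$ — destroys the $1{:}1$ lift: in a Galois cover all cycles of $\alpha$ have the same length, equal to the order of its image in the deck group, so a unique degree-one lift is impossible unless $\alpha$ lifts trivially everywhere. A workable substitute is to arrange the original cover of $Y$ so that all non-distinguished lifts of $\alpha^+$ and $\alpha^-$ have a common degree $N$ (then the counts automatically match, since both equal $(d-1)/N$). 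Finally, a minor point: if $\hat\alpha$ is separating then $[\hat\alpha]=0$ in $H_1(\hat\Sigma;\mathbb{Z}/2)$ and no class in $H^1(\hat\Sigma;\mathbb{Z}/2)$ can evaluate nontrivially on it; the standard remedy (take a double cover unwrapping one side of $\hat\alpha$ so that its lift becomes non-separating) works but should be stated correctly, as the paper does in its one-line reduction.
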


\paragraph{ {\bf Adapted metrics --}} Recall that, on a compact manifold, any two Riemannian metrics are
bi-Lipschitz equivalent. It should be observed that the statement of
Theorem C does not depend on the metric chosen on $\Sigma$,
up to rescaling $K$. Therefore, we may as well suppose that the metric
on $\Sigma$ is adapted to the problem, and we will choose it in the following
way. 

We will say that the metric on $\Sigma$ is {\em adapted} to $\alpha$ if the two following conditions are satisfied

\begin{enumerate}
\item $\alpha$ realizes the unique systole of $\Sigma$;
\item the width of the collar about $\alpha$ is larger than the length of $\alpha$. 
\end{enumerate}

\paragraph{ {\bf Length spectrum --}} If $\Sigma$ is a compact hyperbolic surface, we denote by 
\[ LS(\Sigma) = \left( \ell_1(\Sigma),\ell_2(\Sigma),\ldots,\right) \]
the (unmarked) length spectrum of $\Sigma$ with multiplicity, but with
the restriction that we do not take any higher power of any curve realizing
the systole of $\Sigma$.
In other words, we enumerate, up to making choices, the unoriented closed
geodesic curves $\gamma_n\subset\Sigma$, with $n\geq 1$, that either realize
the systole of $\Sigma$, or meet such a curve only transversally if at all,
sort them by increasing length and set $\ell_k(\Sigma)$ to be the length
of $\gamma_k$.
Recall that the length spectrum of every compact hyperbolic
surface is discrete, and $\ell_k(\Sigma)\to+\infty$ as $k$ goes
to $+\infty$.

\paragraph{ {\bf Increasing the second systole --}} We now reduce Theorem C
to the following statement.
\begin{theorem}\label{teo:bis}
  Let $\Sigma$ be a closed hyperbolic surface, with metric adapted to some
  nonseparating simple closed geodesic $\alpha$. Then there exists a
  finite covering $\pi\colon\Sigma'\to\Sigma$ such that
$\alpha$ has a unique $(1:1)$-lift to $\Sigma'$ and $\ell_2(\Sigma')>\ell_2(\Sigma)$.
\end{theorem}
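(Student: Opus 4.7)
The plan is to combine Scott's LERF property for surface groups with the geometric control provided by the adapted metric. Since $\Sigma$ is compact and hyperbolic, its length spectrum is discrete, so only finitely many closed geodesics $\gamma_1,\ldots,\gamma_m$ of $\Sigma$ other than $\alpha$ have length at most $\ell_2(\Sigma)$; by the adapted metric hypothesis, $\alpha$ is the unique systole of $\Sigma$, no $\gamma_i$ is a power of $\alpha$, and the collar about $\alpha$ has width exceeding $\ell(\alpha)$.

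The construction will produce a finite-index subgroup $H\leq\pi_1(\Sigma)$ containing $\alpha$ with two properties:
\begin{itemize}
\item \emph{Conjugacy-avoidance:} no $\pi_1(\Sigma)$-conjugate of any $\gamma_i$ lies in $H$;
\item \emph{Unique $(1{:}1)$-lift:} the only $g\in\pi_1(\Sigma)$ with $g\alpha g^{-1}\in H$ lie in $H$ itself.
\end{itemize}
In the associated cover $\Sigma':=H\backslash\mathbb{H}^2$, the first condition guarantees that every $\gamma_i$ ``opens up,'' so none of them contributes a closed geodesic of length $\ell_2(\Sigma)$; the second yields a unique $(1{:}1)$-lift $\hat\alpha$ of $\alpha$. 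Since every closed geodesic of $\Sigma'$ projects to a closed geodesic of $\Sigma$ with positive integer multiplicity, any closed geodesic of $\Sigma'$ of length at most $\ell_2(\Sigma)$ must cover $\alpha$ or some $\gamma_i$; the $\gamma_i$'s are ruled out by construction, while multi-cover lifts of $\alpha$ cross any transverse closed geodesic inside the wide collar provided by the adapted metric and thus have length well beyond $\ell_2(\Sigma)$. This will establish $\ell_2(\Sigma')>\ell_2(\Sigma)$.

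The hard step is producing $H$ satisfying conjugacy-avoidance. Standard LERF only separates $\langle\alpha\rangle$ from a \emph{single} element, whereas here one must exclude the entire infinite conjugacy class of each $\gamma_i$ with a single finite-index subgroup. A first natural attempt is to pass to the quotient $Q=\pi_1(\Sigma)/\langle\langle\alpha\rangle\rangle$, which is a free product of a surface group of genus $g-1$ with an infinite cyclic factor (hence residually finite), and to use residual finiteness of $Q$ whenever the image $\bar\gamma_i\in Q$ is nontrivial. However, this approach breaks down when $\gamma_i$ happens to lie in the normal closure of $\alpha$ (which does occur, e.g.\ when $\gamma_i$ bounds a once-holed torus containing $\alpha$). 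Circumventing this degeneracy is the main obstacle, and the expected path, following H.~Wilton's MathOverflow sketch credited in the appendix, is to exploit the action of $\pi_1(\Sigma)$ on the Bass--Serre tree dual to the full family of lifts of $\alpha$ in $\mathbb{H}^2$, together with a graph-of-groups covering construction that simultaneously preserves $\alpha$ and unwinds every $\gamma_i$.
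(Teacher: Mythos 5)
Your framework is sound at the level of what needs to be achieved: you correctly identify that one must pass to a finite-index subgroup $H \leq \pi_1(\Sigma)$ so that the finitely many curves realizing $\ell_2(\Sigma)$ acquire no $(1{:}1)$ lifts while $\alpha$ has exactly one, and you correctly identify that separability alone is insufficient because one must exclude an entire conjugacy class. However, the proposal has two genuine gaps.

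First, and most importantly, the constructive heart of the argument is not supplied. You state that the ``hard step is producing $H$ satisfying conjugacy-avoidance'' and then defer to a Bass--Serre tree / graph-of-groups strategy without carrying it out. Without that step the proof does not exist. The paper resolves this concretely: Lemma~\ref{lem:BetaCool} uses the adapted metric to show that any $\beta$ realizing $\ell_2(\Sigma)$ transverse to $\alpha$ is simple with $i(\alpha,\beta)\le 1$, and then Lemma~\ref{lem:2curvas} runs a case analysis on the topological position of the pair $(\alpha,\beta)$ (intersection number and homology), cuts $\Sigma$ along two or three auxiliary curves dual to a figure-eight or bouquet graph $\Gamma$, and chooses explicit permutation representations $\pi_1(\Gamma)\to\mathfrak{S}_{N+1}$ so that $\alpha$ lifts $(1{:}1)$ exactly once and with all other lifts $(N{:}1)$ while $\beta$ has no $(1{:}1)$ lift. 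Theorem~\ref{teo:bis} then follows by taking the product of these covers over the finitely many $\beta$'s. This is genuinely more elementary and more explicit than the Bass--Serre route you gesture at.

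Second, the sentence claiming that ``multi-cover lifts of $\alpha$ cross any transverse closed geodesic inside the wide collar\ldots and thus have length well beyond $\ell_2(\Sigma)$'' is wrong as stated. A lift of $\alpha$ of degree $k\ge 2$ does not cross anything transverse to $\alpha$; it simply has length $k\,\ell(\alpha)$, and since the only a priori bound is $\ell_2(\Sigma) > \ell(\alpha)$, a $(2{:}1)$ lift may well be shorter than $\ell_2(\Sigma)$. Your conditions on $H$ (conjugacy-avoidance plus a unique $(1{:}1)$ lift) do not by themselves force the remaining lifts of $\alpha$ to be long. This is precisely why Lemma~\ref{lem:2curvas} in the paper demands that all other lifts of $\alpha$ be at least $(N{:}1)$ for a chosen $N$ with $N\ell_1(\Sigma) > \ell_2(\Sigma)$; your subgroup $H$ would need to satisfy the analogous requirement that $g\alpha^k g^{-1}\notin H$ for all $g\notin H$ and all $1\le k<N$, which you have not imposed.
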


\begin{proof}[Proof of Theorem C assuming
Theorem~\ref{teo:bis}]
  First, up to starting with a cover of degree two, we may assume without
  loss of generality that $\alpha$ is nonseparating, and then we may choose
  a hyperbolic metric adapted to $\alpha$.
  Now let
  \[ d_1<d_2<\cdots \]
  be the (unmarked) length spectrum of $\Sigma$, without multiplicity,
  and without restrictions (\textsl{i.e.}, this time we consider all
  geodesic curves).
  Let $\pi^{(1)}\colon\Sigma^{(1)}\to\Sigma$ be a covering of $\Sigma$ as in
  Theorem~\ref{teo:bis}. It follows from the statement of this theorem
  that $\alpha$ admits a unique lift $\alpha^{(1)}$ such that
  $\pi^{(1)}$ is $(1:1)$ in restriction to $\alpha^{(1)}$, and we have
  $\ell_2(\Sigma^{(1)})\geq d_2$. In particular $\alpha^{(1)}$ is the
  unique systole of $\Sigma^{(1)}$, and it follows that the metric on
  $\Sigma^{(1)}$ is adapted to $\alpha^{(1)}$. Hence, we may apply
  Theorem~\ref{teo:bis} to $(\Sigma^{(1)},\alpha^{(1)})$, getting
  a covering $\pi_{(2)}\colon\Sigma^{(2)}\to\Sigma^{(1)}$, and so on.
  
  This yields a sequence of finite coverings $\Sigma^{(k)}\to\Sigma$.
  By construction, for all $k$, $\Sigma^{(k)}$ has a closed geodesic
  $\alpha^{(k)}$ mapping homeomorphically to $\alpha$, and the sequence
  of second systoles of $\Sigma^{(k)}$ is strictly increasing.
  
  Now the  geodesics of $\Sigma^{(k)}$ realizing this second systole project to
  geodesics of $\Sigma$; it follows that $\ell_2(\Sigma^{(k)})\geq d_k$ for
  all $k$. Since the length spectrum of $\Sigma$ is discrete, this sequence
  of coverings provides, for $k$ large enough, a covering satisfying the
  conclusion of Theorem C.
\end{proof}

\paragraph{ {\bf Product of coverings --}} Before carrying out the proof, let us recall a construction of the smallest common covering associated to a finite family of coverings.

\begin{definition}
\label{d:product_cover} Consider a finite family of finite covering maps $\pi^{(i)}:\Sigma^{(i)}\to\Sigma$, for $i=1,...,n$. The \emph{product} of these coverings is the map $\widehat{\pi}:\widehat{\Sigma}\to\Sigma$ where
\[ \widehat{\Sigma}=\left\lbrace (x,y_1,\ldots,y_n)\in
  \Sigma\times\Sigma^{(1)}\times\cdots\times\Sigma^{(n)}:\, \forall j,
  \pi^{(j)}(y_j)=x \right\rbrace \]and $\widehat{\pi}(x,y_1,\cdots,y_n)=x$.
\end{definition}

Notice that by the homotopy lifting property this cover is connected (though this will not be used in the proof). 

This notion allows us to reduce the proof of Theorem \ref{teo:bis} to  that of the following technical result which can also be seen as a straightening of residual finiteness.

\begin{lemma}\label{lem:2curvas}
  Let $\Sigma$ be a closed hyperbolic surface with metric adapted to
  some nonseparating simple closed curve $\alpha$. Let $\beta$ be a geodesic
  of $\Sigma$ realizing the length $\ell_2(\Sigma)$, and intersecting
  $\alpha$ only transversally. Let $N>0$ be an integer. 
  
  Then there exists  a finite covering $\pi_\beta\colon\Sigma'\to\Sigma$, such that
  $\beta$ admits no $(1:1)$ lifts, and such that $\alpha$
  admits a unique $(1:1)$ lift $\hat \alpha$, and such that
  all other lifts of $\alpha$ are at least $(N:1)$.
\end{lemma}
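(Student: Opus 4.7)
My plan is to produce a finite-index subgroup $H \leq \Gamma := \pi_1(\Sigma,x_0)$ whose associated cover $\pi_\beta\colon\Sigma'\to\Sigma$ realizes the lemma. Recall that the lifts of a closed geodesic $\gamma$ to such a cover are in bijection with the double cosets $H\backslash\Gamma/\langle\gamma\rangle$, the degree at $HgZ_\gamma$ (with $Z_\gamma=\langle\gamma\rangle$) being $[Z_\gamma : g^{-1}Hg\cap Z_\gamma]$. Setting $Z:=\langle\alpha\rangle$, the three geometric conditions translate into the following algebraic ones:
\begin{enumerate}
\item[(i)] $\alpha\in H$;
\item[(ii)] for every $g\in\Gamma\setminus HZ$ and every $1\leq k\leq N-1$, $g\alpha^k g^{-1}\notin H$;
\item[(iii)] for every $g\in\Gamma$, $g\beta g^{-1}\notin H$.
\end{enumerate}

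The structural input I rely on is the malnormality of $Z$ in $\Gamma$: since $\Gamma\subset\mathrm{PSL}(2,\R)$ is a torsion-free Fuchsian group and $\alpha$ is primitive hyperbolic, $Z$ equals the stabilizer of the axis of $\alpha$ in $\mathbb{H}^2$, so for $g\notin Z$ the conjugate $gZg^{-1}$ stabilizes a different axis and $gZg^{-1}\cap Z=\{e\}$. The curve $\beta$ is primitive (as enforced by the convention in the definition of $LS(\Sigma)$, which excludes higher powers of the systole $\alpha$) and has an axis distinct from that of $\alpha$, so $\beta$ is not $\Gamma$-conjugate to any nonzero power of $\alpha$; moreover, for $g\notin Z$ and $k,j\neq 0$ one has $g\alpha^k g^{-1}\neq\alpha^j$.

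I would build $H$ as the preimage $H:=q^{-1}(\langle\bar\alpha\rangle)$ under a quotient $q\colon\Gamma\to Q:=\Gamma/M$ onto a finite group, with $M\triangleleft\Gamma$ finite-index normal. Combining the residual finiteness of $\Gamma$ applied to $\{\alpha,\alpha^2,\ldots,\alpha^{N-1},\beta\}$ with Scott's LERF and the conjugacy separability of surface groups applied to the finitely many non-conjugate pairs $(\beta,\alpha^k)$ relevant once the order of $\bar\alpha$ is pinned down, $M$ can be chosen so that in $Q$: (a) $\bar\alpha$ has order at least $N$; (b) $\bar\beta\neq e$; (c) no $Q$-conjugate of $\bar\beta$ lies in $\langle\bar\alpha\rangle$. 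Conditions (i) and (iii) on $H$ are then immediate from these choices. For (ii), malnormality provides, for each of the finitely many pairs $(\bar g,k)\in (Q/\langle\bar\alpha\rangle)\times\{1,\ldots,N-1\}$ that could yield an accidental coincidence $\bar g\bar\alpha^k\bar g^{-1}\in\langle\bar\alpha\rangle$, a specific nontrivial element $g\alpha^k g^{-1}\alpha^{-j}\in\Gamma\setminus\{e\}$; residual finiteness applied to this finite list yields a further $M'\triangleleft\Gamma$ avoiding all of them, and replacing $M$ by $M\cap M'$ --- geometrically, taking the product of the associated covers in the sense of Definition~\ref{d:product_cover} --- enforces (ii) while preserving (a), (b), (c).

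The main obstacle is condition (ii): the malnormality of $Z$ in $\Gamma$ does not automatically descend to the finite quotient $Q$, so a priori accidental coincidences $\bar g\bar\alpha^k\bar g^{-1}\in\langle\bar\alpha\rangle$ could occur. What saves the construction is that in any fixed finite $Q$ only finitely many such potential coincidences can exist, reducing an a priori infinite family of constraints on $H$ to a finite one that residual finiteness eliminates in a single refinement. A finite iteration handles any residual interaction between the refinements needed for (ii) and those for (c), since each step enlarges the finite list of elements of $\Gamma$ being avoided while keeping it finite at every stage.
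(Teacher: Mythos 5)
Your translation of the three geometric requirements into the algebraic conditions (i), (ii), (iii) on a finite-index subgroup $H\supset Z=\langle\alpha\rangle$ is correct, as is your observation that $Z$ is malnormal in $\Gamma$ and that $\beta$ is not $\Gamma$-conjugate to any power of $\alpha$. Your route is genuinely different from the paper's: the paper (after Lemma~\ref{lem:BetaCool}, which shows $\beta$ is simple and $i(\alpha,\beta)\le 1$) runs a case analysis on the topological configuration of $(\alpha,\beta)$, pinches $\Sigma$ onto a graph $\Gamma$ along explicit cutting curves $\delta_i$, and then builds an explicit $(N+1)$-sheeted cover from a permutation representation into $\mathfrak S_{N+1}$ in which $\sigma(a)$ has one fixed point and one $N$-cycle and the word representing $\beta$ acts without fixed points. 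You never need this case analysis or Lemma~\ref{lem:BetaCool}, which would be an advantage if your argument closed.

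However, the argument does not close: the ``finite iteration'' step is where it breaks down. You want to enforce (ii) for $H=q^{-1}(\langle\bar\alpha\rangle)$, and you correctly observe that within a \emph{fixed} finite quotient $Q$ there are only finitely many offending pairs $(\bar g,k)$, each furnishing a nontrivial element $g\alpha^kg^{-1}\alpha^{-j}$ which residual finiteness lets you kill by passing to $Q'=\Gamma/(M\cap M')$. The problem is that (ii) is \emph{not} preserved under refinement, and you give no reason for the iteration to terminate. Concretely: if $\rho\colon Q'\twoheadrightarrow Q$ is the quotient map, then $\rho^{-1}(\langle\bar\alpha\rangle_Q)$ is typically strictly larger than $\langle\bar\alpha\rangle_{Q'}$, so there are cosets $\bar g'\langle\bar\alpha\rangle_{Q'}\neq\langle\bar\alpha\rangle_{Q'}$ with $\rho(\bar g')\in\langle\bar\alpha\rangle_Q$. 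For those $\bar g'$ the hypothesis ``(ii) holds in $Q$'' says nothing, since in $Q$ the corresponding coset was the trivial one; a fresh accidental coincidence $\bar g'\bar\alpha'^k\bar g'^{-1}\in\langle\bar\alpha'\rangle$ may well appear, triggering another refinement, and nothing in your argument bounds the number of rounds. By contrast, condition (c) \emph{is} preserved under refinement (a coincidence $\bar g'\bar\beta'\bar g'^{-1}=\bar\alpha'^j$ in $Q'$ would project to one in $Q$), so it is really (ii) alone that is problematic. Phrased invariantly, (i)+(ii) say that the permutation representation of $\Gamma$ on $\Gamma/H$ sends $\alpha$ to a permutation with a unique fixed point and all other cycles of length $\ge N$; in the quotient language this amounts to $\langle\bar\alpha\rangle$ being self-normalizing in $Q$ with a further lower bound on how powers of $\bar\alpha$ can be conjugated into $\langle\bar\alpha\rangle$. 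This is a stronger separability property of the cyclic subgroup $Z$ than what residual finiteness, Scott's LERF, and conjugacy separability provide individually or in a naive combination --- it is essentially the content of the lemma, and the paper manufactures it by hand with explicit graph covers rather than deducing it from soft separability. To repair your proof you would need either a termination argument for the iteration, or a stronger input (for instance a quantitative version of virtual retraction onto $Z$, or a form of malnormal/special quotient control) that your cited tools do not furnish.
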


\begin{proof}[Proof of Theorem~\ref{teo:bis} assuming Lemma~\ref{lem:2curvas}]
  Let $\gamma_1,\ldots,\gamma_n$ be all the geodesics of $\Sigma$ involved
  in the definition of the length $\ell_2(\Sigma)$, and let $N\geq 1$ be
  such that $N\ell_1(\Sigma)>\ell_2(\Sigma)$.
  For each $j\in\{1,\dots,n\}$ let
  $\pi_{\gamma_j}\colon\Sigma^{(j)}\to\Sigma$ be a finite covering
  as provided by Lemma~\ref{lem:2curvas}. We will prove that the
  product covering of all these coverings satisfies the conclusion of
  Theorem~\ref{teo:bis}. Thus, let us consider $\hat \pi:\widehat{\Sigma}\to\Sigma$, the product of all     
   these coverings. Recall that a point of $\widehat{\Sigma}$ is denoted by $(x,y_1,\cdots y_n)$, $y_j\in\Sigma^{(j)}$ and that $\hat \pi$ is the projection on the first coordinate.
  
  By construction, $\alpha$ has a unique lift to $\widehat{\Sigma}$. It consists
  of points $(x,y_1,\ldots,y_n)$ such that $x$ lies in $\alpha$ and
  such that $y_j$ lies in the unique $(1:1)$ lift of $\alpha$ to
  $\Sigma^{(j)}$, for all $j$. It follows that $\hat \alpha$ is the unique
  systole of $\widehat{\Sigma}$, and  $\ell_2(\widehat{\Sigma})>\ell_2(\Sigma)$.
  Thus, let us consider a geodesic $\gamma$ of $\widehat{\Sigma}$ that may intersect
  $\hat \alpha$ only transversally (or equivalently, which is not a power of $\hat \alpha$), we have to prove that its length is
  $>\ell_2(\Sigma)$. 

  If $\gamma$ projects to $\alpha$ in $\Sigma$, then, as $\gamma$ intersects
  $\hat \alpha$ only transversally, there exists $j\in\{1,\ldots,n\}$ such that
  the image of $\gamma$ in $\Sigma^{(j)}$ is not the $(1:1)$ lift of
  $\alpha$. Hence, the length of $\gamma$ is at least $N\ell_1(\Sigma)$,
  which is (strictly) larger than $\ell_2(\Sigma)$.

  Otherwise, $\gamma$ projects to a curve $\hat \pi(\gamma)$ which may intersect
  $\alpha$ only transversally. By definition of $\ell_2(\Sigma)$, it follows
  that the length of $\hat \pi(\gamma)$ is at least $\ell_2(\Sigma)$, and
  equal to $\ell_2(\Sigma)$ only if $\hat \pi(\gamma)$ is one of the
  $\gamma_j$, $j\in\{1,\ldots,n\}$. But $\gamma_j$ does not have any
  $(1:1)$ lifts to $\Sigma^{(j)}$. It follows that $\gamma$ is not a
  $(1:1)$ lift of $\gamma_j$, hence the length of $\gamma$ is strictly
  larger than $\ell_2(\Sigma)$ in either case, and Theorem~\ref{teo:bis}
  is proven.
\end{proof}

\paragraph{ {\bf Curves realizing the second systole --}} We now use the assumption made on the metric in order to study curves that realize the second systole.

\begin{lemma}\label{lem:BetaCool}
  Let $\Sigma$ be a closed hyperbolic surface with metric adapted to
  a closed geodesic $\alpha$. Let $\beta$ be a curve intersecting
  $\alpha$ only transversally, and realizing the length $\ell_2(\Sigma)$.
  Then $\beta$ is simple, and its geometric intersection number with
  $\alpha$ is at most one.
\end{lemma}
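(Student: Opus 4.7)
The strategy is to argue by contradiction. Assume either (a) $\beta$ has a self-intersection, or (b) $i(\alpha,\beta) \geq 2$. In each case I construct a simple closed curve $\gamma$ satisfying $\ell(\gamma) < \ell(\beta)$ whose free homotopy class is primitive, nontrivial, and distinct from $[\alpha^{\pm 1}]$. Once such a $\gamma$ is exhibited, its geodesic representative $\gamma^\ast$ is a simple closed geodesic distinct from $\alpha$; hence $\gamma^\ast$ lies in the length spectrum $LS(\Sigma)$ and satisfies $\ell_1(\Sigma) < \ell(\gamma^\ast) \leq \ell(\gamma) < \ell_2(\Sigma)$, which contradicts the definition of $\ell_2(\Sigma)$ as the second smallest entry of $LS(\Sigma)$ (using that $\alpha$ is the unique systole).

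For case (b): assume first that $\beta$ is simple and $k = i(\alpha,\beta) \geq 2$. Pick two intersection points $p,q \in \alpha \cap \beta$ consecutive along $\beta$, and let $\mu$ be the subarc of $\beta$ between them, so $\mu \cap \alpha = \{p,q\}$. Let $\eta, \eta'$ be the two subarcs of $\alpha$ with endpoints $\{p,q\}$, chosen so that $\ell(\eta) \leq \ell(\alpha)/2$. Set $\gamma := \mu \cup \eta$ and $\gamma' := \mu \cup \eta'$; both are simple closed curves. By pigeonhole $\ell(\mu) \leq \ell(\beta)/k \leq \ell(\beta)/2$, so $\ell(\gamma) \leq \ell(\beta)/2 + \ell(\alpha)/2 < \ell(\beta)$, using $\ell(\alpha) < \ell(\beta)$. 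Neither $\gamma$ nor $\gamma'$ is null-homotopic (that would yield an embedded bigon between the distinct geodesics $\alpha$ and $\beta$, which is forbidden on a hyperbolic surface). To rule out that both $\gamma$ and $\gamma'$ are freely homotopic to $\alpha^{\pm 1}$, I compute in $H_1(\Sigma;\mathbb{Z}/2)$: $[\gamma] + [\gamma'] = 2[\mu] + [\eta] + [\eta'] = [\alpha]$, whereas both being freely homotopic to $\alpha^{\pm 1}$ would force this sum to be $0$. Thus $[\alpha] = 0$ in $\mathbb{Z}/2$-homology, which is impossible for non-separating $\alpha$ (the relevant case in the application to Theorem~C; the separating case reduces to a similar argument in a component of $\Sigma \setminus \alpha$). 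Hence at least one of $\gamma, \gamma'$ provides the desired curve.

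For case (a): suppose $\beta$ has a self-intersection at $q$, splitting $\beta = \gamma_1 \cdot \gamma_2$ into two subloops at $q$. Both $\gamma_i$ are nontrivial in $\pi_1$ (else $\beta$ could be shortened within its free homotopy class, contradicting that $\beta$ is geodesic) and of length strictly less than $\ell(\beta) = \ell_2(\Sigma)$. By the same minimality principle applied to the geodesic representative of each $[\gamma_i]$, each $\gamma_i$ must be freely homotopic to some nonzero power $\alpha^{n_i}$. Passing to the cyclic cover $\Sigma_\alpha = \D / \langle \alpha \rangle$ — a hyperbolic cylinder with core $\tilde \alpha$ of length $\ell(\alpha)$ — each $\gamma_i$ lifts to a geodesic arc $\tilde \gamma_i \subset \Sigma_\alpha$ that closes up after $n_i$ wraps around $\tilde \alpha$. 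The length of such a geodesic loop in a hyperbolic cylinder is given by a standard formula in terms of $|n_i|\ell(\alpha)$ and the distance of the basepoint $\tilde q_i$ to the core; invoking the adapted-metric hypothesis that the collar about $\alpha$ has width exceeding $\ell(\alpha)$ then provides enough geometric room to derive a contradiction with $\ell(\gamma_1) + \ell(\gamma_2) = \ell(\beta)$ and with $\beta$ being geodesic but not a power of $\alpha$.

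The main obstacle is case (a): straightforward surgery at a self-intersection produces two sub-loops, but both may a priori be freely homotopic to powers of $\alpha$, in which case the elementary minimality argument does not directly deliver a curve strictly shorter than $\beta$ in the spectrum. Overcoming this requires the quantitative geometric input of the adapted metric — specifically, the collar-width inequality — together with the fine geometry of geodesic arcs in the cyclic cylindrical cover, which together force the purported lengths of the sub-loops to be incompatible with the decomposition $\ell(\gamma_1)+\ell(\gamma_2)=\ell(\beta)$.
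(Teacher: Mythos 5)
Your overall strategy (exhibit an essential curve of length $<\ell_2(\Sigma)$ not freely homotopic to $\alpha^{\pm1}$, then pass to its geodesic representative) matches the paper, but you take genuinely different routes in both cases, and both routes have gaps as written.

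\emph{Simplicity of $\beta$.} You decompose $\beta$ at an arbitrary self-intersection into sub-loops $\gamma_1,\gamma_2$, observe that each must be freely homotopic to a power of $\alpha$, and then propose to extract a contradiction from collar geometry in the cyclic cover. You explicitly stop before carrying out this last step, so this part is a declared gap rather than a proof. The paper avoids this quantitative analysis entirely: since $\beta$ realizes the minimum length among primitive non-simple curves, Buser's theorem (\cite[Theorem 4.2.4]{Bu}) says $\beta$ is a figure eight. Writing $\beta=\beta_1\beta_2$ at the unique double point and setting $\beta_3=\beta_1\beta_2^{-1}$, all three loops are strictly shorter than $\ell_2(\Sigma)$ and (being simple, hence primitive) must be freely homotopic to $\alpha^{\pm 1}$; then the relation $[\beta_3]=[\beta_1]-[\beta_2]$ in $H_1(\Sigma;\mathbb Z)$ forces $[\alpha]=0$, contradicting non-separation. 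This is purely topological/homological and needs no collar-width input. If you want to complete your route instead, note that the hard part is precisely what Buser's theorem packages away; you would have to control \emph{all} self-intersections, not just one, and the figure-eight structure is exactly what makes the bookkeeping finite.

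\emph{Intersection number $\leq 1$.} Here you build $\gamma=\mu\cup\eta$ and $\gamma'=\mu\cup\eta'$, prove $\ell(\gamma)<\ell(\beta)$ by pigeonhole, and then show via $\mathbb{Z}/2$-homology that not both of $\gamma,\gamma'$ can be freely homotopic to $\alpha^{\pm1}$. But this only guarantees that \emph{one} of the two is in a new homotopy class, and your length bound applies only to $\gamma$: the bound $\ell(\gamma')\leq\ell(\beta)/2+\ell(\alpha)$ is not $<\ell(\beta)$ unless you also know $\ell(\alpha)<\ell(\beta)/2$. So if it were $\gamma$ (the short one) that happened to be freely homotopic to $\alpha^{\pm1}$, your argument would not produce a curve that is simultaneously short and in a new class. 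The needed extra inequality $\ell(\alpha)<\ell(\beta)/2$ does follow from the adapted-metric hypothesis --- each of the $\geq 2$ arcs of $\beta$ between consecutive crossings of $\alpha$ lifts to a geodesic segment between two distinct lifts of $\alpha$ in $\mathbb D$, hence has length at least the full collar width $>\ell(\alpha)$ --- but you never invoke it in this case. The paper's argument uses the collar width from the start and needs only one candidate curve: replace one of the two arcs $\beta_1,\beta_2$ of $\beta$ by the shorter sub-arc $\alpha_1\subset\alpha$ between the same endpoints; the collar width guarantees $\ell(\alpha_1)<\ell(\beta_2)$, hence $\alpha_1\cup\beta_1$ is strictly shorter than $\beta$, and it cannot be null-homotopic or isotopic to $\alpha$ without creating a bigon between the geodesics $\alpha$ and $\beta$. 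That yields the contradiction directly, with no case split.

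In short: the approach is viable but, as written, case (a) is incomplete and case (b) has a genuine logical gap, both fixable but requiring exactly the tools (Buser's figure-eight theorem, the collar-width inequality) that the paper deploys.
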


\begin{proof}
  First, let us prove that $\beta$ is simple, by contradiction.
  Recall (see~\cite[Theorem~4.2.4]{Bu}) that if a curve realizes the
  minimum of the length among all primitive non-simple curves, then
  it is a figure eight. Obviously, the curve $\beta$ is primitive
  by assumption, hence this theorem applies: $\beta$ has a unique
  self-intersection point. Let us choose this intersection point as
  a base point and write $\beta=\beta_1 \beta_2$ as the concatenation
  of two closed loops, both geodesic except at the base point.
  Also, $\beta_3=\beta_1 \beta_2^{-1}$ is a non-trivial loop, geodesic
  except at the base point.
  
  By minimality of $\ell_2(\Sigma)$ and uniqueness of the curve realizing the systole on $\Sigma$,
  it follows that $\beta_1$, $\beta_2$ and $\beta_3$ are all freely homotopic
  to $\alpha$, hence $\beta_2$ and $\beta_3$ are conjugate to
  $\beta_1^{\pm 1}$. In the abelianization of $\pi_1(\Sigma)$, this gives
  a contradiction, as $\alpha$ was supposed to be non-separating, hence
  nontrivial in homology.
  
  Now, again by contradiction, suppose that $\alpha$ and $\beta$
  intersect at least twice. Then we may decompose $\beta$ as the
  concatenation of two geodesic segments, $\beta_1$ and $\beta_2$,
  with common endpoints on $\alpha$,
  and let $\alpha_1$ be a geodesic subpath of $\alpha$ of minimal
  length and joining these two endpoints of $\beta_1$ and $\beta_2$.
  As the collar around $\alpha$ is greater than its
  length, $\alpha_1$ is shorter than $\beta_2$. Hence the curve formed
  by $\alpha_1$ and $\beta_1$ is shorter than $\beta$, and it is essential
  and non-freely homotopic to $\alpha$
  (for otherwise $\alpha$ and $\beta$ would form a bigon). This contradicts
  the minimality assumption on~$\beta$.
\end{proof}

\paragraph{ {\bf Conclusion --}} Now we finish the proof of the theorem.
\begin{proof}[Proof of Lemma~\ref{lem:2curvas}]
  Thanks to Lemma~\ref{lem:BetaCool}, we are left with four possibilities:
  \begin{enumerate}
  \item $i(\alpha,\beta)=1$;
  \item $i(\alpha,\beta)=0$ and $(\alpha,\beta)$ is free in the homology
    of $\Sigma$;
  \item $i(\alpha,\beta)=0$ and $[\beta]=0$ in homology;
  \item $i(\alpha,\beta)=0$ and $[\alpha]+[\beta]=0$ in homology mod 2;
  \end{enumerate}
  these four cases are illustrated in Figure~\ref{fig:4casos}.
  
\begin{figure}[h!]
\centering
\includegraphics[scale=0.8]{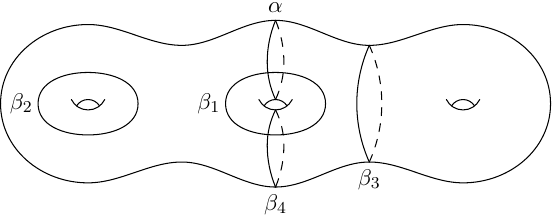}
\caption{The curve $\alpha$ and the four possible cases for $\beta$.}\label{fig:4casos}
\end{figure}

 Cases (1) and (2) are the easiest to deal with: in both these cases,
  we can find two disjoint, simple geodesics $\delta_1$, $\delta_2$
  with geometric intersection numbers
  $i(\alpha,\delta_2)=i(\beta,\delta_1)=0$ and
  $i(\alpha,\delta_1)=i(\beta,\delta_2)=1$.
  We can then cut $\Sigma$ along $\delta_1$ and $\delta_2$; this gives
  a surface $\Sigma_1$ with four boundary components. Finally we can
  glue $N+1$ pieces of $\Sigma_1$, along a graph as suggested
  in Figure~\ref{fig:2PrimCasos}, thus obtaining a surface $\widehat{\Sigma}$
  covering $\Sigma$ in a way that satisfies the lemma.
  Let us be a little more precise here. We may choose a
  co-orientation for the curves $\delta_1$ and $\delta_2$. This
  gives an orientation for the edges of the (figure eight) graph $\Gamma$
  dual to the cutting system $(\delta_1,\delta_2)$.
  
\begin{figure}[h!]
\centering
\includegraphics[scale=0.9]{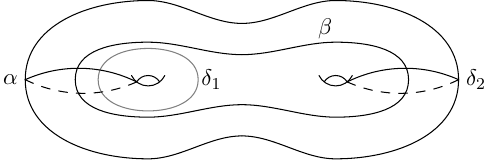}\hfill
\includegraphics[scale=1]{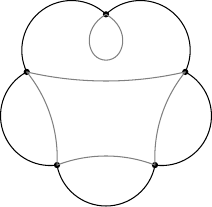}
\caption{Left: cutting $\Sigma$. Right: gluing $\Sigma'$ with pieces of $\Sigma_1$.}\label{fig:2PrimCasos}
\end{figure}
  
  Denote by $\langle a,b\rangle$ the fundamental group of $\Gamma$,
  where $a$ is the oriented edge dual to $\delta_1$ and $b$ dual to
  $\delta_2$.
  Any covering of $\Gamma$ gives rise to a covering of $\Sigma$, either by
  pulling back a pinching map $\Sigma\to\Gamma$, or equivalently,
  by thinking of the
  covering of $\Gamma$ as a set of instructions for gluing as many copies
  of $\Sigma_1$ as the vertices of the covering graph.
  The covering of $\Gamma$ suggested in Figure~\ref{fig:2PrimCasos}
  is associated to a morphism
  $\sigma\colon\langle a,b\rangle\to\mathfrak{S}_{N+1}$ (the symmetric group on $N+1$ elements)
  where $\sigma(a)$ has one fixed point and one cycle of length $N$,
  and $\sigma(b)$ has a cycle of length $N+1$.
  Now, up to choosing an orientation on them, the closed
  curves $\alpha,\beta$ yield two loops in this figure eight oriented graph:
  $\alpha$ yields the path $a$, hence $\alpha$ has one $(1:1)$ lift and
  one $(N:1)$ lift, while $\beta$ yields the path $b$, hence it has
  one $(N+1:1)$ lift, and this covering satisfies the conclusion
  of Lemma~\ref{lem:2curvas}.
  
  Cases (3) and (4) are similar, except that the curve $\beta$ cannot
  be mapped to a single generator $b$ in $\Gamma$, but to a  slightly
  longer word. In case (3), we can find two disjoint simple curves
  $\delta_1,\delta_2$ such that $i(\alpha,\delta_1)=1$, $i(\alpha,\delta_2)=0$,
  and $i(\beta,\delta_1)=i(\beta,\delta_2)=2$ as in
  Figure~\ref{fig:2UltCasos}. 

\begin{figure}[h!]
\centering
\includegraphics[scale=1]{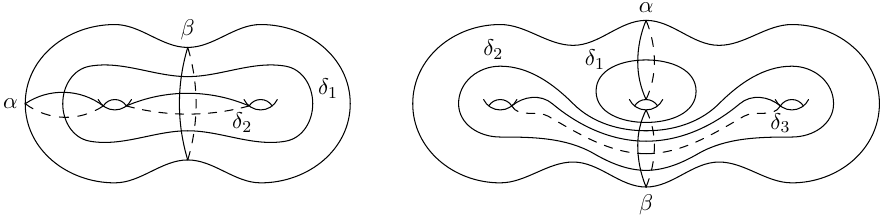}
\caption{The curve $\alpha$ and the four possible cases for $\beta$.}\label{fig:2UltCasos}
\end{figure}
  
  For some co-orientations of $\delta_1$
  and $\delta_2$, and some orientations on $\alpha$ and $\beta$,
  the loop $\alpha$ gives the loop $a$ in the graph $\Gamma$ as above,
  while the loop $\beta$ yields the word $aba^{-1}b^{-1}$. Thus as before,
  finding a cover satisfying the conclusion of Lemma~\ref{lem:2curvas}
  amounts to choosing two permutations $\sigma(a)$ and $\sigma(b)$,
  such that $\sigma(a)$ has one fixed point and one cycle of length $N$,
  say, $(2~3\cdots~N+1)$, and such that the commutator
  $\sigma(aba^{-1}b^{-1})$ has no fixed point: it suffices to choose
 $\sigma(b)$ so that $\sigma(ba^{-1}b^{-1})=(1~2\cdots N)$ (recall that all cycles of length $N$ are conjugated since for every permutation $B\in\mathfrak{S}_{N+1}$ and every cycle $A=(a_1~a_2\cdots~a_N)$ we have $BAB^{-1}=(B(a_1)~B(a_2)\cdots~B(a_N))$).

  Finally, in case (4), which may happen only if the genus of $\Sigma$
  is at least three, it is best to cut $\Sigma$ along three curves
  $\delta_1$, $\delta_2$ and $\delta_3$, as pictured in
  Figure~\ref{fig:2UltCasos}. This time $\alpha$ yields the word
  $a$, while $\beta$ yields the word $abcb^{-1}c^{-1}$ in the fundamental
  group of the graph $\Gamma$, which is this time a bouquet of three circles.
  We pick again $\sigma(a)$ to be the
  cycle $(2~3\cdots N+1)$, as before. As long as $N\geq 5$,
  the cycle $(1~3~5)$ is a commutator in $\mathfrak{S}_{N+1}$. For example we can write $(1~3~5)=BCB^{-1}C^{-1}$ where $B=(2~4~6)$ and $C=(1~2)(3~4)(5~6)$.
   Hence we may pick $\sigma(b)=B$ and $\sigma(c)=C$ so that
  $\sigma(bcb^{-1}c^{-1})=(1~3~5)$, and then
  $\sigma(abcb^{-1}c^{-1})$ has no fixed point: this yields a covering of
  $\Sigma$ satisfying the conclusion of Lemma~\ref{lem:2curvas}, in either
  case.  
  
\end{proof}

\end{document}